\newtheorem {theorem}              {Theorem}
\newtheorem {lemma}       [theorem]{Lemma}
\newtheorem {proposition} [theorem]{Proposition}
\newtheorem {conjecture}  [theorem]{Conjecture}
\newtheorem {question}    [theorem]{Question}
\newtheorem {axiom}       [theorem]{Axiom}
\theoremstyle{definition}
\newtheorem*{definition}           {Definition}
\begin{document}

\title{On minimal colorings without\\monochromatic solutions to a linear
  equation}

\author{Boris Alexeev}
\address{Massachusetts Institute of Technology\newline \indent
  Cambridge, MA 02139, USA}
\email{borisa@mit.edu}

\author{Jacob Fox}
\address{Massachusetts Institute of Technology\newline \indent
  Cambridge, MA 02139, USA}
\email{licht@mit.edu}

\author{Ron Graham}
\address{University of California, San Diego\newline \indent
  La Jolla, CA 92093, USA}
\email{graham@ucsd.edu}

\begin{abstract}
  For a ring $R$ and system $\mathcal{L}$ of linear homogeneous
  equations, we call a coloring of the nonzero elements of $R$
  \emph{minimal for $\mathcal{L}$} if there are no monochromatic
  solutions to $\mathcal{L}$ and the coloring uses as few colors as
  possible. For a rational number $q$ and positive integer $n$, let
  $E(q,n)$ denote the equation $\sum_{i=0}^{n-2} q^{i}x_i =
  q^{n-1}x_{n-1}$. We classify the minimal colorings of the nonzero
  rational numbers for each of the equations $E(q,3)$ with $q \in
  \{\frac{3}{2},2,3,4\}$, for $E(2,n)$ with $n \in \{3,4,5,6\}$, and for
  $x_1+x_2+x_3=4x_4$. These results lead to several open problems and
  conjectures on minimal colorings.
\end{abstract}

\maketitle

\section{Introduction}

The early developments in Ramsey theory focused mainly on partition
regularity of systems of linear equations. A system $\mathcal{L}$ of
linear homogeneous equations with coefficients in a ring $R$ is called
\emph{$r$-regular} over $R$ if, for every $r$-coloring of the nonzero
elements of $R$, there is a monochromatic solution to $\mathcal{L}$. A
system $\mathcal{L}$ of linear homogeneous equations is called
\emph{regular} over $R$ if it is $r$-regular over $R$ for all positive
integers $r$.

In 1916, Schur \cite{Sc} proved that the equation $x+y=z$ is regular
over $\mathbb{Z}$. In 1927, van der Waerden \cite{Wa} proved his
celebrated theorem that every finite coloring of the positive integers
contains arbitrarily long monochromatic arithmetic progressions. In his
1933 thesis, Rado \cite{Ra} generalized the theorems of Schur and van
der Waerden by classifying those systems of linear homogeneous equations
that are regular over $\mathbb{Z}$.  In particular, a linear homogeneous
equation with nonzero integer coefficients is regular over $\mathbb{Z}$
if and only if some nonempty subset of the coefficients sums to zero. In
1943, Rado \cite{Ra1} generalized the theorem further by classifying
those systems of linear equations that are regular over a subring of the
complex numbers. More recently, analogues of Rado's theorem have been
proven for abelian groups \cite{De1}, finite fields \cite{BeDeHi}, and
commutative rings \cite{BeDeHiLe}.

Some of the major remaining open problems on partition regularity
concern the properties of colorings that are free of monochromatic
solutions to a system of equations.

\begin{definition}
\label{mini} A coloring of the nonzero elements of a ring $R$ (or
more generally, a set of numbers $S$) is called \emph{minimal for a
  system $\mathcal{L}$ of linear homogeneous equations} if it is free of
monochromatic solutions to $\mathcal{L}$ and uses as few colors as
possible.
\end{definition}

Three basic questions arise for a given ring $R$ and system
$\mathcal{L}$ of linear homogeneous equations.

\begin{question}
What are the minimal colorings for $\mathcal{L}$?
\end{question}

\begin{question}\label{question:colors}
How many colors are used in a minimal coloring for $\mathcal{L}$?
\end{question}

\begin{question}\label{question:colorings}
How many minimal colorings, up to isomorphism, are there for
$\mathcal{L}$?
\end{question}

Rado made the following unresolved conjecture in his thesis on
Question~\ref{question:colors}.

\begin{conjecture} [Rado, \cite{Ra}] \label{RBC}
For all positive integers $m$ and $n$, there exists a positive integer
$k(m,n)$ such that if a system of $m$ linear equations in $n$ variables
is $k(m,n)$-regular over $\mathbb{Z}$, then the system is regular over
$\mathbb{Z}$.
\end{conjecture}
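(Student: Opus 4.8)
Because this is Rado's Boundedness Conjecture, which the paper itself notes is unresolved, I can only propose a line of attack and indicate where it breaks down rather than offer a complete argument. The plan would begin by recalling Rado's theorem over $\mathbb{Z}$: a system $\mathcal{L}$ is regular if and only if its coefficient matrix satisfies the columns condition. The conjecture is then equivalent to the assertion that every system of $m$ equations in $n$ variables which \emph{fails} the columns condition can be certified non-regular by an explicit coloring of $\mathbb{Z} \setminus \{0\}$ using a number of colors bounded in terms of $m$ and $n$ alone, uniformly over all choices of integer coefficients. So the first step is to manufacture, from the failure of the columns condition, a coloring with no monochromatic solution; the whole difficulty is to keep the number of colors independent of the magnitudes of the coefficients.

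For a single equation ($m = 1$) this program has been completed by Fox and Kleitman, who showed that $k(1,n)$ exists and can even be taken to be an absolute constant independent of $n$. The guiding idea is that one can $r$-color the integers, for bounded $r$, so that every color class is concentrated at essentially one multiplicative scale (and one sign pattern); in a monochromatic solution the variables then have comparable size, and the equation $\sum a_i x_i = 0$ forces a nonempty subset of the coefficients to sum to zero, contradicting Rado's criterion for the single-equation case stated in the introduction. I would try to carry this philosophy to systems by attaching to each nonzero integer only a bounded amount of coarse data---a scale window, a sign, and residues to a bounded modulus---and arguing that any tuple monochromatic in all the resulting colors would force an algebraic cancellation that the failure of the columns condition forbids.

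The main obstacle, and the reason the conjecture is open for $m \ge 2$, is exactly the passage from one equation to several. Failure of the columns condition is a global statement about the impossibility of ordering the columns of the matrix into blocks whose successive sums lie in the rational span of the earlier columns; unlike the single-equation case it does not localize to one offending coefficient that a single-scale coloring can defeat. A coloring must avoid monochromatic solutions to all $m$ equations at once, and the interactions among them seem to demand increasingly many scales as the coefficients grow, so the number of colors resists being bounded by $m$ and $n$ alone. I expect any honest attempt to stall precisely here, and so the most one can realistically contribute at present is the reduction above together with the known resolution of the case $m = 1$.
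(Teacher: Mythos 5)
This statement is a conjecture (Rado's Boundedness Conjecture), and the paper offers no proof of it---it explicitly calls it unresolved---so you were right not to attempt a complete argument, and declining to ``prove'' it is the only defensible position. However, your summary of the state of the art, which is the substantive content of your proposal, gets the key facts backwards. You claim that the case $m=1$ ``has been completed by Fox and Kleitman, who showed that $k(1,n)$ exists and can even be taken to be an absolute constant independent of $n$,'' and that the conjecture remains open only because of ``the passage from one equation to several.'' Both claims are false. What Fox and Kleitman proved is the single case $m=1$, $n=3$: a linear equation in \emph{three} variables that is $36$-regular over $\mathbb{Z}$ is regular over $\mathbb{Z}$. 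The case $m=1$ with $n\ge 4$ variables is wide open, and no bound independent of $n$ is known.

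More importantly, the reduction you identify as the ``main obstacle'' was already carried out by Rado himself: he proved that if the conjecture holds for $m=1$ (single equations), then it holds for all $m$. So the passage from one equation to systems is precisely the part that is \emph{not} the difficulty; the entire remaining problem lives inside the single-equation case as the number of variables grows. Your proposed program---bounded-data colorings defeating failures of the columns condition, with the hard part being the interaction among several equations---therefore aims its effort at a step that is already settled, while treating as solved the step that is actually open. Any honest account of this conjecture should invert your picture: cite Rado's reduction to $m=1$, cite Fox--Kleitman for $n=3$ (with the explicit constant $36$), and locate the obstruction in single equations with many variables, where single-scale colorings are not known to suffice.
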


Conjecture~\ref{RBC} is commonly known as Rado's Boundedness Conjecture
\cite{HiLeSt}. Rado proved that Conjecture~\ref{RBC} is true if it is
true in the case when $m=1$, that is, for single linear equations. Rado
also settled his conjecture in the simple cases $n=1$ and
$n=2$. Kleitman and the second author \cite{FoKl} recently proved Rado's
Boundedness Conjecture for $n=3$. They proved that if a linear equation
in 3 variables is 36-regular over $\mathbb{Z}$, then it is regular over
$\mathbb{Z}$.

Rado also made the following conjecture in his thesis.

\begin{conjecture}[Rado, \cite{Ra}] \label{next}
For each positive integer $n$, there is a linear equation that is
$n$-regular over $\mathbb{Z}$ but not $(n+1)$-regular over $\mathbb{Z}$.
\end{conjecture}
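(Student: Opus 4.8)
The plan is to reduce the conjecture to a single well-chosen family and to pin down its degree of regularity exactly. The natural candidate is the family $E(2,n)$ introduced above. Moving everything to one side, $E(2,n)$ has coefficients $1,2,4,\dots,2^{n-2},-2^{n-1}$, and no nonempty subset of these sums to zero: any subset of the positive powers of two sums to at most $1+2+\dots+2^{n-2}=2^{n-1}-1<2^{n-1}$, so the negative coefficient can never be cancelled. By the criterion quoted above, $E(2,n)$ is therefore not regular over $\mathbb{Z}$ and has a finite degree of regularity $d_n$. I would aim to prove that $d_n=n-1$ for every $n\ge 2$. Granting this, the equation $E(2,n+1)$ is $n$-regular but not $(n+1)$-regular, which is exactly the statement of the conjecture after reindexing; the classifications asserted above for $n\in\{3,4,5,6\}$ then serve as the verified base cases and as evidence for the general pattern.

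The upper bound $d_n\le n-1$ (that $E(2,n)$ is not $n$-regular) can be obtained cleanly by an explicit $n$-coloring. Color each nonzero integer $x$ by the residue of its $2$-adic valuation $v_2(x)$ modulo $n$. In a putative monochromatic solution all $v_2(x_i)$ lie in one class $c \pmod n$, so the valuations of the left-hand terms $2^i x_i$ are $i+v_2(x_i)\equiv i+c \pmod n$ for $0\le i\le n-2$; these $n-1$ residues are pairwise distinct, hence the corresponding integer valuations are distinct and the minimum is attained uniquely. Thus $v_2$ of the left side equals $\min_{0\le i\le n-2}(i+v_2(x_i))$, which is congruent mod $n$ to one of $c,c+1,\dots,c+n-2$, while $v_2$ of the right side is $(n-1)+v_2(x_{n-1})\equiv c+(n-1)\pmod n$. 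These two classes are different, so the two sides cannot be equal, and the coloring has no monochromatic solution.

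For the lower bound $d_n\ge n-1$ (that $E(2,n)$ is $(n-1)$-regular) I would argue by induction on $n$ with van der Waerden's theorem as the engine. Given an arbitrary $(n-1)$-coloring, one passes to a long monochromatic arithmetic progression and tries to realize a solution of $E(2,n)$ inside it, peeling off a variable at each stage and invoking the inductive hypothesis for $E(2,n-1)$ on a rescaled copy of $\mathbb{Z}$. The essential structural feature making such an induction possible is the scale invariance of the equation: if $(x_i)$ is a solution then so is $(\lambda x_i)$, which is what lets one convert the purely additive configurations produced by van der Waerden into solutions respecting the geometric coefficients $2^i$.

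The hard part will be this lower bound, and in particular matching it to the upper bound \emph{exactly}. Proving that every $(n-1)$-coloring is forced to contain a monochromatic solution is a genuine Ramsey-type statement, and the bookkeeping needed to guarantee the degree of regularity is precisely $n-1$, rather than off by one, is exactly what made the small cases require careful, largely ad hoc analysis; it is not clear that a single uniform induction closes for all $n$. Should the clean value $d_n=n-1$ fail for large $n$, the fallback is to exploit the extra parameter $q$ and prove instead that the two-parameter family $E(q,n)$ realizes every positive integer as a degree of regularity, trading the uniform formula for a surjectivity argument over the parameter space.
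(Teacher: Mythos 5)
The statement you set out to prove is Conjecture~\ref{next}, which the paper itself does \emph{not} prove and explicitly records as open: the authors only note that the family $E(2,n)$ is \emph{conjectured} (by Fox and Radoi\v{c}i\'c) to witness it, i.e.\ that $E(2,n)$ is $(n-1)$-regular, and they verify this only for $n\le 7$ (via the structural Lemmas~\ref{lemma:fr} and~\ref{lemma:muliple4} for $n=3,4$, and by computer for $n=5,6,7$). Your proposal follows exactly this route. The half you do prove is correct: your argument that the coloring by $v_2(x)\bmod n$ admits no monochromatic solution of $E(2,n)$ is sound (the residues $i+c$ for $0\le i\le n-2$ are distinct mod $n$, so the minimum valuation on the left is attained uniquely and differs mod $n$ from $c+n-1$ on the right), and it is essentially Proposition~\ref{proposition:E2n}, with your unique-minimum observation standing in for Lemma~\ref{lemma:valuation}. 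Likewise your verification that no nonempty subset of the coefficients sums to zero, hence non-regularity by Rado's criterion, is fine. But the entire content of the conjecture lies in the lower bound $d_n\ge n-1$, and there you have only a sketch that does not close.

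Concretely, the van der Waerden induction has no traction as described. Van der Waerden's theorem produces translation-invariant configurations (monochromatic arithmetic progressions), and translation invariance is exactly what $E(2,n)$ lacks: its coefficients sum to $1+2+\dotsb+2^{n-2}-2^{n-1}=-1\ne 0$, so constant tuples are not solutions and a long monochromatic AP carries no solution ``for free''; the dilation invariance you invoke gives no mechanism for converting the additive structure of an AP into the geometric relations among the terms $2^i x_i$ that the equation demands. The induction is also circular in the number of colors: the hypothesis that $E(2,n-1)$ is $(n-2)$-regular says nothing about an $(n-1)$-coloring until you can force two color classes to coincide on a suitable multiplicative substructure, and that forcing step is precisely what Lemmas~\ref{lemma:fr} and~\ref{lemma:muliple4} accomplish by hand for $n=3,4$ and what has resisted generalization to all $n$. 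Your fallback (that the two-parameter family $E(q,n)$ realizes every degree of regularity) is likewise an unsupported hope rather than an argument. So the proposal is not a proof: it reduces Conjecture~\ref{next} to the same open sub-conjecture that the paper records, which is consistent with the fact that the paper offers no proof to compare against.
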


While Conjecture~\ref{next} has remained open, Radoi\v{c}i\'c and the
second author found a family of linear homogeneous equations that they
conjecture verifies Conjecture~\ref{next}. For a rational number $q$ and
positive integer $n$, let $E(q,n)$ denote the equation
\[ x_0+qx_1+\dotsb+q^{n-2}x_{n-2}=q^{n-1}x_{n-1}. \]

\begin{definition}
For a nonzero rational number $q$ and prime number $p$, there is a
unique representation of $q$ as $q=p^va/b$ such that $a$, $b$, and $v$
are integers, $b$ is positive, $\gcd(a,b)=1$, and $p\nmid a,b$.  Define
$v_p(q)$ to be the integer $v$ and $w_p(q) \in \{1,\dotsc,p-1\}$ by
$w_p(q) \equiv ab^{-1} \pmod{p}$.
\end{definition}

For a prime $p$ and positive integer $n$, let $c_{p,n}\colon \mathbb{Q}
\setminus \{0\} \rightarrow \{0,1,\dotsc,n-1\}$ be the $n$-coloring of
the nonzero rational numbers defined by $c_{p,n}(q)\equiv v_p(q)
\pmod{n}$.

To avoid any possible confusion, we now define what it means for two
colorings of a set to be isomorphic.

\begin{definition}
  Two colorings $c_1\colon S \rightarrow C_1$ and $c_2\colon S
  \rightarrow C_2$ of the same set $S$ are \emph{isomorphic} if there is
  a bijection $\phi\colon C_2 \rightarrow C_1$ such that $c_1 =\phi
  \circ c_2$.
\end{definition}

Radoi\v ci\'c and the second author \cite{FoRa} proved that the
$n$-coloring $c_{p,n}$ is free of monochromatic solutions to
$E(p,n)$. Hence, the equation $E(p,n)$ is not $n$-regular over
$\mathbb{Z}$. They also conjecture that $E(2,n)$ is $(n-1)$-regular over
$\mathbb{Z}$, which would imply Conjecture~\ref{next}. We make the
following stronger conjecture.

\begin{conjecture} \label{stronger}
For $n>2$, $c_{2,n}$ is the only $n$-coloring, up to isomorphism, of the
nonzero rational numbers without a monochromatic solution to $E(2,n)$.
\end{conjecture}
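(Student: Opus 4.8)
The plan is to split the statement into its easy half---that $c_{2,n}$ has no monochromatic solution to $E(2,n)$---and its hard uniqueness half. I would first record the mechanism behind the easy half, due to Fox--Radoi\v ci\'c, since it governs everything that follows. If $x_0,\dots,x_{n-1}$ all receive one color $c$, then $v_2(x_i)\equiv c\pmod n$ for every $i$, so the left-hand terms $2^ix_i$ of $E(2,n)$ have $2$-adic valuations $i+v_2(x_i)\equiv c+i\pmod n$ for $i=0,\dots,n-2$. These residues are pairwise distinct modulo $n$, hence the valuations are distinct integers, so the minimum is attained uniquely and $v_2\bigl(\sum_{i=0}^{n-2}2^ix_i\bigr)\equiv c+i_0\pmod n$ for some $i_0\in\{0,\dots,n-2\}$. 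The right-hand side $2^{n-1}x_{n-1}$ has valuation $\equiv c-1\pmod n$, and since $c-1\not\equiv c+i_0\pmod n$ the two sides cannot be equal. I would take this as given.

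For uniqueness, let $c$ be any $n$-coloring of $\mathbb{Q}\setminus\{0\}$ with no monochromatic solution. Using homogeneity I would normalize (say $c(1)=0$) and reduce the whole claim to two structural facts: (i) $c(q)$ depends only on $v_2(q)$, i.e.\ multiplication by any rational $a/b$ with $a$ and $b$ odd preserves color; and (ii) the induced coloring $\bar c\colon\mathbb{Z}\to\{0,\dots,n-1\}$ of the valuation levels is, after relabeling colors, $\bar c(v)=v\bmod n$. Note that (ii) contains the assertion that all $n$ colors must actually appear, which is exactly the (still open) Fox--Radoi\v ci\'c statement that $E(2,n)$ is $(n-1)$-regular; so the hard direction already subsumes a problem known to be difficult.

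The engine for producing constraints is the family of solutions of $E(2,n)$ coming from carry identities such as $2^0+2^0+2^1+\dotsb+2^{n-3}=2^{n-2}$, together with their twists by odd parts (for instance $1+3=4$ gives the solution $x_0=1$, $x_1=\frac{3}{2}$, $x_2=1$ of $E(2,3)$, in which the odd number $3$ enters through $2x_1$). Each such identity exhibits an explicit solution, so $c$ is forbidden from coloring all its entries alike, and sweeping free parameters through these solutions chains the colors of many elements together. For (i) I would fix $x_1=\dotsb=x_{n-2}=s$ to collapse $E(2,n)$ to the rigid three-variable relation $x_0+2\bigl(2^{n-2}-1\bigr)s=2^{n-1}x_{n-1}$ and play families of such relations, and their odd twists, against one another to force $c(uq)=c(q)$ for every odd $u$ and then for all odd-over-odd multipliers. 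Once $c$ factors through $v_2$, the identities above become a purely combinatorial coloring problem on $\mathbb{Z}$, and I would argue that the cyclic coloring is the only $n$-coloring surviving all of them, simultaneously pinning the number of colors and the constant shift between consecutive levels.

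The genuine obstacle is step (i): proving, \emph{uniformly in $n$}, that color cannot depend on the odd part. The number and shape of the available solutions---hence of the gadgets needed to link two arbitrary same-level elements---grow with $n$, and the equations $E(2,n)$ do not nest as $n$ varies, so no induction on $n$ is apparent. This is presumably why no single argument covers all $n$ and why the cases $n\in\{3,4,5,6\}$ are handled individually, each demanding a finite but increasingly intricate analysis of the constraint hypergraph generated by the carry solutions and their odd twists.
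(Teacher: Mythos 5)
You were asked about a statement that the paper itself labels a \emph{conjecture} and does not prove: the authors establish it only for $n=3$ and $n=4$ (Propositions~\ref{proposition:E23} and~\ref{proposition:E24}), report omitted computer-generated verifications for $n=5$ and $n=6$, and present the general statement as a strengthening of the still-open Fox--Radoi\v{c}i\'c conjecture that $E(2,n)$ is $(n-1)$-regular. So there is no complete proof in the paper to compare yours against, and your submission---explicitly a roadmap rather than a proof---is an accurate reflection of the actual state of affairs. Your treatment of the easy half is correct and is a minor variant of the paper's Proposition~\ref{proposition:E2n}: you argue that under $c_{2,n}$ the valuations $v_2(2^ix_i)\equiv c+i \pmod{n}$, $0\le i\le n-2$, are pairwise distinct, so the valuation of the left side is $c+i_0$ with $i_0\le n-2$, which cannot match the right side's $c-1 \pmod n$; the paper instead invokes Lemma~\ref{lemma:valuation}, which says a vanishing sum must have its two smallest valuations equal. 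These are the same computation read in opposite directions.

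Your uniqueness skeleton also matches what the paper does in the cases it can handle. For $n=3$ and $n=4$ the written proofs follow exactly your steps (i) and (ii): first Lemma~\ref{lemma:fr} (proved via a Cayley-graph coloring argument) resp.\ Lemma~\ref{lemma:muliple4} (proved via a case analysis with forbidden ratios) establish that $c(2^m3^nx)=c(x)$ if and only if $m\equiv 0 \pmod{n}$; then an induction over odd integers $p$, driven by explicit solutions of the kind you call carry identities and odd twists (e.g.\ $(16x,2(p-4)x,px)$ for $n=3$, $(\frac{32}{3}x,\frac{32}{3}x,2(p-4)x,px)$ for $n=4$), shows that multiplication by odd rationals preserves color, which is your step (i); finally $c(-x)=c(x)$ is forced, completing your step (ii). Your diagnosis of the obstruction is likewise correct: the gadget solutions are chosen ad hoc for each $n$, the equations $E(2,n)$ do not embed into one another as $n$ grows, and step (ii) already subsumes $(n-1)$-regularity of $E(2,n)$, which is open. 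The one thing that must be said plainly is that your text proves nothing beyond the easy half; it cannot be counted as a proof of the statement, and indeed no such proof is currently known---which is precisely why the paper states it as a conjecture.
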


In Section~\ref{section2}, we verify Conjecture~\ref{stronger} for $n=3$
and $n=4$. With a computer-generated proof, we have also verified
Conjecture~\ref{stronger} for $n=5$ and $n=6$, and that $E(2,7)$ is
$6$-regular. For brevity, we do not include these proofs. By the same
technique, it can be shown that $c_{3,3}$ is the only minimal coloring
of the nonzero rational numbers that is free of monochromatic solutions
to $E(3,3)$. We do, however, include a proof of the following result.

\begin{proposition} \label{proposition7}
The only $3$-colorings, up to isomorphism, of the nonzero rational
numbers without a monochromatic solution to $E(\frac{3}{2},3)$ are
$c_{2,3}$ and $c_{3,3}$.
\end{proposition}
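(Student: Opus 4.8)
The plan is to split the argument into the (routine) construction direction and the (substantial) uniqueness direction. For the construction, I would first record the general principle that a single equation $\sum_i \alpha_i x_i = 0$ with nonzero rational coefficients has no monochromatic solution under $c_{p,N}$ whenever the valuations $v_p(\alpha_i)$ are pairwise distinct modulo $N$: if all $x_i$ received a common color $\gamma$, then $v_p(\alpha_i x_i) \equiv v_p(\alpha_i) + \gamma \pmod{N}$ would be pairwise distinct, so the terms $\alpha_i x_i$ would have pairwise distinct $p$-adic valuations, and their unique minimum could not cancel. Rewriting $E(\tfrac32,3)$ as $4x_0 + 6x_1 = 9x_2$, the coefficients $4,6,-9$ have $2$-adic valuations $2,1,0$ and $3$-adic valuations $0,1,2$, each a complete residue system modulo $3$; hence both $c_{2,3}$ and $c_{3,3}$ are minimal. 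This also explains conceptually why exactly these two colorings appear.

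For uniqueness, I would fix an arbitrary $3$-coloring $c$ of $\mathbb{Q}\setminus\{0\}$ with no monochromatic solution and normalize, by relabeling, so that $c(1)=0$. The engine of the proof is a toolkit of forbidden configurations obtained from explicit solutions together with the scale-invariance of the solution set (if $(x_0,x_1,x_2)$ is a solution, so is $(\lambda x_0,\lambda x_1,\lambda x_2)$). From $(3,1,2)$, $(3,4,4)$, and $(9,6,8)$ I would record that for every $\lambda$ the triples $\{c(3\lambda),c(\lambda),c(2\lambda)\}$ and $\{c(8\lambda),c(6\lambda),c(9\lambda)\}$ are nonconstant and that $c(3\lambda)\neq c(4\lambda)$; the degenerate solutions $(\tfrac34 t,t,t)$, $(t,\tfrac56 t,t)$, and $(t,t,\tfrac{10}{9}t)$ give the two-variable relations $c(\tfrac34 t)\neq c(t)$, $c(\tfrac56 t)\neq c(t)$, and $c(t)\neq c(\tfrac{10}{9}t)$.

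I would first pin the coloring down on the multiplicative subgroup $\langle 2,3\rangle$. Writing $f(a,b)=c(2^a3^b)$, the triple constraints become a rigidity problem for $3$-colorings of $\mathbb{Z}^2$, which I expect to force $f$ into a short list of near-linear patterns. The delicate point is that the constraints internal to $\langle 2,3\rangle$ do not by themselves isolate the two desired answers: the mixed pattern $g(q)=(v_2(q)+v_3(q))\bmod 3$ satisfies every one of them, yet is not minimal, as witnessed by the monochromatic solution $(2,\tfrac{19}{6},3)$ (indeed $4\cdot 2+6\cdot\tfrac{19}{6}=27=9\cdot 3$, and $g$ sends all three entries to the same color). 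Eliminating such mixed patterns therefore requires solutions that leave the subgroup $\langle 2,3\rangle$, and this cross-prime interaction is where I expect the main difficulty to lie: one must use relations such as $c(t)\neq c(\tfrac{10}{9}t)$ and $c(t)\neq c(\tfrac56 t)$ to transport color information across cosets involving the prime $5$ and, through further solutions, every prime $\geq 5$.

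Finally, once $f$ is forced to be $a\bmod 3$ or $b\bmod 3$ up to the residual relabeling, I would run one more round of solution constraints to show that $c$ is invariant under multiplication by $-1$ and by every prime $\geq 5$, so that $c$ depends only on $v_2\bmod 3$ or only on $v_3\bmod 3$; this identifies $c$ with $c_{2,3}$ or $c_{3,3}$ and completes the classification. The hardest step is the middle one: organizing the finite but intertwined case analysis that simultaneously rules out the mixed lattice patterns and establishes invariance under the remaining primes.
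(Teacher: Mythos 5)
Your construction half is correct, and in fact more self-contained than the paper's treatment (the paper disposes of it with the remark that the argument is ``similar to Proposition~\ref{proposition:E2n}''): rewriting $E(\frac{3}{2},3)$ as $4x_0+6x_1=9x_2$ and noting that the $2$-adic and $3$-adic valuations of the coefficients each form a complete residue system modulo $3$ does show, via the valuation principle (the paper's Lemma~\ref{lemma:valuation}), that $c_{2,3}$ and $c_{3,3}$ are both free of monochromatic solutions. Your observation that the mixed pattern $g(q)=(v_2(q)+v_3(q))\bmod 3$ satisfies every constraint internal to the subgroup $\langle 2,3\rangle$ yet admits the monochromatic solution $(2,\frac{19}{6},3)$ is also correct, and it isolates precisely why cross-prime solutions are indispensable.

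However, the uniqueness half --- which is the entire substance of the proposition --- is not proved; it is a plan whose decisive step you yourself label the ``hardest'' and leave unexecuted, and the plan has a structural flaw that your own counterexample exposes. You propose to \emph{first} pin the coloring down on $\langle 2,3\rangle$ and \emph{then} transport color information across other primes, but the counterexample $g$ shows the internal constraints cannot determine the coloring on that lattice, so the proof cannot proceed in that order; conceding that the lattice analysis and the prime-transport must be done ``simultaneously'' is an admission that the organizing device is missing, not a substitute for it. Concretely, three things are absent: (i) no rigidity statement for $3$-colorings of $\mathbb{Z}^2$ is proved --- a priori the coloring is an arbitrary function on the lattice, not of linear type, so one must classify all colorings compatible with the constraints (compare the paper's Lemma~\ref{lemma:fr}, where uniqueness of the proper $3$-coloring of a Cayley graph is proved, and note that this rigidity holds for the lattice generated by $\frac{10}{9}$ and $\frac{3}{4}$, which involves the prime $5$, not for $\langle 2,3\rangle$); (ii) the ``short list of near-linear patterns'' is never derived; (iii) the invariance of $c$ under multiplication by $-1$ and by every prime $\geq 5$ is asserted as a final ``round of solution constraints,'' whereas it actually requires an induction over primes with carefully chosen solutions (the paper uses, e.g., ratios and solutions involving $5$, $13$, $\frac{85}{6}$, $\frac{14}{9}$). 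The paper organizes all of this by a dichotomy you do not have: either $c(x)=c(2x)$ for some $x$, in which case a chain of lemmas forces $c=c_{3,3}$, or $c(x)\neq c(2x)$ for all $x$, in which case another chain forces $c=c_{2,3}$; each branch is several pages of explicit solution-chasing. That case analysis \emph{is} the proof of the proposition, and your proposal does not contain it.
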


As it pertains to Question~\ref{question:colorings}, the following
definition is natural.

\begin{definition}
\label{numb} For a system $\mathcal{L}$ of linear homogeneous
equations over a ring $R$, let $\Delta(\mathcal{L};R)$ denote the number
(as a cardinality) of minimal colorings, up to isomorphism, of the
nonzero elements of $R$ for $\mathcal{L}$.
\end{definition}

For example, we have $\Delta(E(2,n);\mathbb{Q})=1$ for $n \in
\{3,4,5,6\}$ and $\Delta(E(\frac{3}{2},3);\mathbb{Q})=2$.

The following conjecture would reduce the problem of finding
$\Delta(E(q,n);\mathbb{Q})$ to the case when $q$ is a prime power.

\begin{conjecture}
If $a$, $b$, and $n$ are integers, $n>2$, $a>1$, $\lvert b\rvert > 1$,
and $\gcd(a,b)=1$, then
\[ \Delta(E(a,n);\mathbb{Q})=\Delta(E(-a,n);\mathbb{Q}) \]
and
\[
\Delta(E(ab,n);\mathbb{Q})=\Delta(E(a/b,n);\mathbb{Q})=\Delta(E(a,n);\mathbb{Q})+\Delta(E(b,n);\mathbb{Q}). \]
\end{conjecture}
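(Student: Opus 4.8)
The guiding idea is that a minimal coloring for $E(q,n)$ ought to be controlled by a single prime: I expect that every minimal coloring of $\mathbb{Q}\setminus\{0\}$ for $E(q,n)$ is, up to isomorphism, pulled back from the behavior at one prime $p$ with $v_p(q)\neq 0$, and that it coincides there with a minimal coloring for the prime-power equation $E(p^{\lvert v_p(q)\rvert},n)$. Granting this \emph{localization principle}, I would deduce the formula
\[ \Delta(E(q,n);\mathbb{Q})=\sum_{p\,:\,v_p(q)\neq 0}\Delta\bigl(E(p^{\lvert v_p(q)\rvert},n);\mathbb{Q}\bigr), \]
the sum ranging over the finitely many primes dividing the numerator or denominator of $q$; here I write $\lvert v_p(q)\rvert$ because the prime-power count is insensitive to inverting $q$, a fact itself visible from the substitutions below. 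Every clause of the conjecture then follows by tracking how the data $\{(p,\lvert v_p(q)\rvert)\}$ transforms.

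For the sign identity, the key point is that the substitution $z_i=(-1)^ix_i$ carries solutions of $E(a,n)$ bijectively to solutions of $E(-a,n)$, since $\sum_{i=0}^{n-2}(-a)^i z_i=\sum_{i=0}^{n-2}a^i x_i=a^{n-1}x_{n-1}=(-a)^{n-1}z_{n-1}$. This substitution merely flips signs, so it fixes every valuation $v_p$; hence $a$ and $-a$ carry identical data and the formula gives $\Delta(E(a,n);\mathbb{Q})=\Delta(E(-a,n);\mathbb{Q})$. For the identity $\Delta(E(ab,n);\mathbb{Q})=\Delta(E(a/b,n);\mathbb{Q})$, I would use the rescaling $y_i=b^{2i}x_i$, which transforms $E(ab,n)$ into $E(a/b,n)$ after clearing denominators; since $v_p(ab)=v_p(a)=v_p(a/b)$ for $p\mid a$ and $\lvert v_p(ab)\rvert=v_p(b)=\lvert v_p(a/b)\rvert$ for $p\mid b$, the two equations present the same profile $\{(p,\lvert v_p\rvert)\}$ and hence equal $\Delta$. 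Finally, for the additive identity, coprimality forces the primes dividing $a$ to be disjoint from those dividing $b$, so the displayed sum splits as $\sum_{p\mid a}+\sum_{p\mid b}$, which is exactly $\Delta(E(a,n);\mathbb{Q})+\Delta(E(b,n);\mathbb{Q})$.

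A genuine wrinkle is that the substitutions above are index-dependent—each acts differently on the coordinate $x_i$ for different $i$—so they do not by themselves induce maps on colorings of $\mathbb{Q}\setminus\{0\}$. I would confine the index-dependence to the level of solution sets and transport colorings only through the localization principle: once each minimal coloring is identified with the finite datum of a single prime $p$ together with a minimal coloring for a prime-power equation, the sign substitution (which fixes $p$ and every $v_p$) and the rescaling (which fixes $p$ and reflects $v_p(b)\mapsto-v_p(b)$ at $p\mid b$) act on this datum transparently, and the required bijections between minimal colorings become immediate.

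The main obstacle is the localization principle itself, which is precisely the kind of structural classification that the verified cases of Conjecture~\ref{stronger} and Proposition~\ref{proposition7} establish only one equation at a time. Proving it in general would in particular require showing that $E(q,n)$ is $(n-1)$-regular, so that a minimal coloring uses exactly $n$ colors, and then establishing the rigidity that no minimal coloring can simultaneously exploit the valuations at two distinct primes without either admitting a monochromatic solution or using more than $n$ colors. I expect this concentration at a single prime to be the crux, with the additive, sign, and $a/b$-versus-$ab$ identities being comparatively formal consequences once it is in hand.
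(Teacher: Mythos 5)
A point of orientation first: the statement you are attacking is one of the paper's \emph{open conjectures}; the paper contains no proof of it, so your proposal must stand entirely on its own. It does not, and the gap is the one you name yourself: everything is deferred to the \emph{localization principle}, which you assume rather than prove. That principle is not a quotable lemma --- it is strictly stronger than the conjecture it is meant to establish. It immediately yields your sum formula and hence all three identities, but it also subsumes the paper's other open problems: it would force $E(q,n)$ to be $(n-1)$-regular (the territory of Conjecture~\ref{next}, still open), and it would settle the conjectured values of $\Delta(E(p^r,n);\mathbb{Q})$ discussed in the introduction. The only evidence for it consists of the handful of classifications proved one equation at a time (Conjecture~\ref{stronger} verified for $n\in\{3,4,5,6\}$, Proposition~\ref{proposition7}), each obtained by equation-specific, partly computer-generated casework; neither the paper nor your proposal offers any mechanism for proving concentration at a single prime in general. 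So what you have is a reduction of one conjecture to a stronger one, i.e.\ the entire mathematical content is deferred.

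Moreover, the parts you present as ``comparatively formal'' are not free-standing either. The substitutions $z_i=(-1)^ix_i$ and $y_i=b^{2i}x_i$ are correct bijections on solution \emph{tuples}, but, as you concede, they are index-dependent: they do not carry monochromatic solutions to monochromatic solutions, so they induce no map on colorings and, absent the localization principle, prove nothing about $\Delta$. The same applies to the identity $\Delta(E(p^{k},n);\mathbb{Q})=\Delta(E(p^{-k},n);\mathbb{Q})$ that is implicit in your writing $\lvert v_p(q)\rvert$; it is plausible (one can check, via the valuation lemma, that $c_{p,n}$ is also free of monochromatic solutions to $E(1/p,n)$), but it is not established by the rescaling argument for the same reason. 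In its favor, your heuristic is consistent with all the data in the paper --- it correctly predicts $\Delta(E(\frac{3}{2},3);\mathbb{Q})=1+1=2$, with the two minimal colorings $c_{2,3}$ and $c_{3,3}$ each supported at a single prime --- and it is a sensible way to think about why the conjecture should be true. But as a proof it is circular at the decisive step: the conjecture has been traded for a stronger unproven statement, so nothing has been proved.
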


Let $\mathbb{Q}\setminus\{0\}=Q_0 \cup Q_1$ be the partition of the
nonzero rational numbers given by $q \in Q_0$ if and only if $v_2(q)$ is
even. Consider the $3$-coloring $c$ of $Q_0$ given by $c_0(q)=i$ if and
only if $v_2(q)\equiv 2i \pmod{6}$. For a permutation $\pi$ of the set
$\{0,1,2\}$, define the $3$-coloring $c_{\pi}$ of the nonzero rational
numbers by $c_{\pi}(q)=c(q)$ for $q \in Q_0$ and $c_{\pi(q)}=\pi(c(2q))$
for $q \in Q_1$. It is easy to check that the six colorings of the form
$c_{\pi}$ are each minimal for the equation $E(4,3)$. It can be shown
that these are all the minimal colorings of the nonzero rational numbers
for the equation $E(4,3)$, but we leave it out for brevity. This
construction can be easily generalized to give $(n!)^{r-1}$ different
$n$-colorings that are free of monochromatic solutions to $E(2^r,n)$. It
seems likely that these are the only minimal colorings for $E(2^r,n)$.

For each odd prime $p$ and integer $n > 2$, we can find
$(n!)^{\frac{p-3}{2}}$ different $n$-colorings of the nonzero rational
numbers that are free of monochromatic solutions to $E(p,n)$. These
$n$-colorings $c$ are given by the following two properties:
\begin{enumerate}
\item For every nonzero rational number $q$, we have $c(q)=c(p^jq)$
if and only if $j$ is a multiple of $n$. \item If
$v_p(q_1)=v_p(q_2)$ and $w_p(q_1) \equiv \pm w_p(q_2)
\pmod{p}$, then $c(q_1)=c(q_2)$.
\end{enumerate}

For an odd prime $p$, positive integers $n$ and $r$ with $n>2$, the
observations above can be generalized to construct a family of
$(n!)^{r+\frac{p-5}{2}}$ different $n$-colorings of the nonzero rational
numbers that are free of monochromatic solutions to $E(p^r,n)$. It seems
plausible, though we have shown little evidence to support it, that
these are all the minimal colorings for $E(p^r,n)$. This would imply
that $\Delta(E(p^r;n);\mathbb{Q})=(n!)^{r+\frac{p-5}{2}}$ for $n>2$, $p$
an odd prime, and $r$ a positive integer.

The total number of colorings of the nonzero rational numbers is
$2^{\aleph_0}$. Hence, for every system $\mathcal{L}$ of equations, we
have $\Delta(\mathcal{L};\mathbb{Q}) \leq 2^{\aleph_0}$. This upper
bound can be achieved, as the following proposition demonstrates.

\begin{proposition} \label{proposition9}
We have
\[ \Delta(x_1+x_2+x_3=4x_4;\mathbb{Q})=2^{\aleph_0}. \]
\end{proposition}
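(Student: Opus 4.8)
The plan is to pin down the minimum number of colors and then manufacture a continuum-sized family of minimal colorings out of the prime $5$. Since $\mathbb{Q}\setminus\{0\}$ is countable, every coloring uses at most $\aleph_0$ colors and there are at most $2^{\aleph_0}$ colorings in all, so the upper bound $\Delta(x_1+x_2+x_3=4x_4;\mathbb{Q})\le 2^{\aleph_0}$ is immediate; the whole content lies in the reverse inequality.

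First I would show that the minimum number of colors is $4$. Writing the coefficients as $(a_1,a_2,a_3,a_4)=(1,1,1,-4)$, observe that no nonempty sub-multiset of them has a sum divisible by $5$, since the possible sums lie in $\{-4,\dots,3\}\setminus\{0\}$. Hence the $4$-coloring $q\mapsto w_5(q)$ is free of monochromatic solutions: in any solution $\sum_i a_ix_i=0$ with all $w_5(x_i)$ equal to a common residue $w$, dividing by $5^{e}$ with $e=\min_i v_5(x_i)$ and reducing modulo $5$ forces $w\sum_{v_5(x_i)=e}a_i\equiv 0\pmod 5$, which is impossible because $w\not\equiv 0$ and the relevant subset sum is not divisible by $5$. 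This gives the upper bound of $4$ on the minimum.

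The harder half is the lower bound, that every $3$-coloring of $\mathbb{Q}\setminus\{0\}$ admits a monochromatic solution (equivalently, that the equation is $3$-regular over $\mathbb{Q}$), and I expect this to be the main obstacle. A natural route is the De~Bruijn--Erd\H{o}s theorem: since three is finite, the equation is $3$-regular if and only if some finite set $F\subseteq\mathbb{Q}\setminus\{0\}$ has no proper $3$-coloring, so it suffices to exhibit one such finite gadget, perhaps via a computer search as elsewhere in the paper. As evidence that three colors genuinely fail, one checks that any coloring factoring through $w_5$ must separate all four residue classes: every two-element subset $C\subseteq\{1,2,3,4\}$ admits residues in $C$ that, weighted by the coefficients, sum to $0$ modulo $5$ (for instance $1+2+2\equiv 0$), and one verifies that the corresponding monochromatic configurations are actually realizable over $\mathbb{Q}$ and remain so after refining the coloring by any other prime.

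Granting that the minimum is $4$, I would construct the minimal colorings as follows. For each family $\Phi=(\phi_k)_{k\in\mathbb{Z}}$ of bijections $\phi_k\colon\{1,2,3,4\}\to\{0,1,2,3\}$, set $c_\Phi(q)=\phi_{v_5(q)}(w_5(q))$. Each $c_\Phi$ uses all four colors already on a single $5$-adic level, and it is free of monochromatic solutions: the elements attaining $e=\min_i v_5(x_i)$ share a color, so, $\phi_e$ being a bijection, they share a value of $w_5$, and the modulo-$5$ computation above again yields a contradiction. Thus every $c_\Phi$ is minimal. There are $(4!)^{\aleph_0}=2^{\aleph_0}$ such families, and two of them give isomorphic colorings precisely when they lie in one orbit of the diagonal action $(\phi_k)_k\mapsto(\pi\circ\phi_k)_k$ of a color permutation $\pi\in S_4$. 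Since each orbit has size at most $24$, there are $2^{\aleph_0}$ isomorphism classes, which yields $\Delta(x_1+x_2+x_3=4x_4;\mathbb{Q})\ge 2^{\aleph_0}$ and hence the claimed equality.
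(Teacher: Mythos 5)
Your construction and counting are essentially the paper's own: your colorings $c_\Phi$ are exactly the colorings $c_{\Pi_5}$ (the paper normalizes $\pi_0$ to the identity rather than quotienting by the diagonal $S_4$-action, but the count of $2^{\aleph_0}$ isomorphism classes is the same), and your verification that each is free of monochromatic solutions---reduce modulo $5$ at the minimal $5$-adic valuation and use that no nonempty sub-multiset of $\{1,1,1,-4\}$ sums to $0 \bmod 5$---is the same valuation argument the paper uses. The upper bound $\Delta \le 2^{\aleph_0}$ is likewise the paper's observation that there are only $2^{\aleph_0}$ colorings in total.

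The genuine gap is the step you explicitly defer: that $x_1+x_2+x_3=4x_4$ is $3$-regular over $\mathbb{Q}$, i.e.\ that no $3$-coloring avoids monochromatic solutions. Without this, your $c_\Phi$ are merely solution-free $4$-colorings, not \emph{minimal} ones, and the proposition does not follow. Your De~Bruijn--Erd\H{o}s reduction to a finite gadget is a legitimate strategy, but you never produce the gadget; and your ``evidence''---that a coloring factoring through $w_5$ must separate the four residue classes---says nothing about general $3$-colorings, which need not factor through $w_5$ or any finite quotient (your sample instance $1+2+2\equiv 0$ is also not a solution pattern, since it forces the fourth residue to be $0 \bmod 5$). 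This missing step is precisely where the paper invests its effort: Lemmas~\ref{lemma:forb1}, \ref{lemma:forb2}, and \ref{lemma:forb3} show that in any coloring free of monochromatic solutions the numbers $x$, $2x$, $3x$, $4x$ receive four distinct colors; Lemma~\ref{lemma:forb2} alone ($c(x)\ne c(3x)$) requires the long computer-generated case analysis of Table~\ref{table:proof}, and Lemma~\ref{lemma:forb3} a further page of argument. Since a $3$-coloring is a $4$-coloring with one color class empty, those lemmas yield $3$-regularity. So your outline has the right shape, but what is omitted is the hard combinatorial core of the result, not a routine verification.
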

In fact, we classify all of the $2^{\aleph_0}$ minimal colorings for
$x_1+x_2+x_3=4x_4$ in Section~\ref{section3}.

For a prime number $p$, let $C_p$ be the $(p-1)$-coloring of the nonzero
rational numbers defined by $C_p(q) = w_p(q)$. For any set
$A=\{a_1,\dotsc,a_n\}$ such that no non-empty subset of $A$ sums to
zero, Rado proved that the equation $a_1x_1+\dotsb+a_nx_n=0$ is not
regular by showing that if $p$ is a sufficiently large prime number,
then the coloring $C_p$ is free of monochromatic solutions to
$a_1x_1+\dotsb+a_nx_n=0$. It turns out that the $4$-coloring $C_5$ is
minimal for the equation $x_1+x_2+x_3=4x_4$.

Let $\Pi_p=(\pi_n)_{n \in \mathbb{Z}}$ be a sequence of permutations of
the set $\{1,\dotsc,p-1\}$ of nonzero elements of $\mathbb{Z}_p$ such
that $\pi_0$ is the identity permutation. For each such sequence
$\Pi_p$, we define the coloring $c_{\Pi_p}$ of the nonzero rational
numbers by $c_{\Pi_p}(q)=\pi_{v_p(q)}(w_p(q))$. In particular, the
coloring $c_{\Pi_p}$ is the same as $C_p$ if $\pi_n$ is the identity
permutation for all integers $n$. It is a straightforward exercise to
show that each of the $(p-1)$-colorings $c_{\Pi_p}$ is free of
monochromatic solutions to the equation
$x_1+\dotsb+x_{p-2}=(p-1)x_{p-1}$. For $p=5$, we can say even more.

\begin{proposition} \label{proposition10}
Each of the $4$-colorings $c_{\Pi_5}$ is minimal for $x_1+x_2+x_3=4x_4$,
and there are no other minimal colorings for $x_1+x_2+x_3=4x_4$.
\end{proposition}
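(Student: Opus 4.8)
The plan is to separate the statement into three parts: (i) each $c_{\Pi_5}$ is a four-coloring free of monochromatic solutions; (ii) no coloring of $\mathbb{Q}\setminus\{0\}$ with three or fewer colors is free of monochromatic solutions, so four colors is the minimum; and (iii) every four-coloring free of monochromatic solutions is isomorphic to some $c_{\Pi_5}$. Part (i) is precisely the exercise quoted above in the case $p=5$, so I would take it as given; combined with (ii) it shows that each $c_{\Pi_5}$ is minimal, and (iii) then shows there are no others. Since the normalization $\pi_0=\mathrm{id}$ kills the global color-relabeling freedom (an isomorphism must restrict to the identity on the valuation-$0$ level, where $c_{\Pi_5}=w_5$ is onto $\{1,2,3,4\}$), distinct normalized sequences give non-isomorphic colorings, so the family has cardinality $(4!)^{\aleph_0}=2^{\aleph_0}$, consistent with Proposition~\ref{proposition9}.

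Before addressing (ii) and (iii) I would extract the constraints forced by the simplest solutions. Substituting $(x,x,2x,x)$, $(2x,3x,3x,2x)$, and $(4x,4x,4x,3x)$ into $x_1+x_2+x_3=4x_4$ shows that any valid coloring $c$ satisfies $c(x)\ne c(2x)$, $c(x)\ne c(\tfrac{3}{2}x)$, and $c(x)\ne c(\tfrac{4}{3}x)$ for all nonzero $x$; a short check confirms these are the only ratios arising from two-valued solutions. Identifying each coset of $\langle 2,3\rangle$ with $\mathbb{Z}^2$ via $2^a3^b\mapsto(a,b)$, the forbidden ratios become the difference vectors $(1,0)$, $(-1,1)$, and $(2,-1)$, and the resulting graph is properly three-colored by $v_2(\cdot)\bmod 3$. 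Thus the two-valued constraints alone can never force a fourth color, and the entire content of both (ii) and (iii) must come from genuinely three- and four-valued solutions such as $1+11+(-8)=4\cdot 1$. This same solution destroys $v_2(\cdot)\bmod 3$, since there $c(1)=c(11)=c(-8)=c(1)=0$, which is exactly the phenomenon the proof must exploit and control.

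For (ii) I would prove that the equation is $3$-regular over $\mathbb{Q}$. By a standard compactness argument over the product space of colorings it suffices to exhibit a finite set $F\subset\mathbb{Q}\setminus\{0\}$ every $3$-coloring of which already contains a monochromatic solution. The natural candidate for $F$ is a cluster of scalar multiples $\{rx : r\in R\}$ closed under the relevant solution relations, together with a few translates involving the prime $5$. The argument would combine the three forbidden ratios above (which, as noted, pin the coloring down to $v_2\bmod 3$-type behavior on each $\langle 2,3\rangle$-coset) with four-valued solutions linking different cosets, showing that no $3$-assignment can survive.

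For (iii) I would analyze a valid four-coloring $c$ by bootstrapping, aiming to show (a) that $c(q)$ depends only on the pair $(v_5(q),w_5(q))$, and (b) that for each fixed valuation level the four residues $w_5\in\{1,2,3,4\}$ receive four distinct colors, which defines a permutation $\pi_{v_5}$; after normalizing $\pi_0=\mathrm{id}$ by an isomorphism this is exactly $c_{\Pi_5}$. Step (b) amounts to forcing the inequalities for ratios $3$ and $4$ (the two pairs among $\{q,2q,3q,4q\}$ not covered by $2,\tfrac{3}{2},\tfrac{4}{3}$), and step (a) amounts to forcing the equalities $c(q)=c(uq)$ for every $u$ with $v_5(u)=0$ and $w_5(u)=1$; both would be obtained by constructing suitable monochromatic-forcing solutions once three colors have been excluded. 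The main obstacle, in both (ii) and (iii), is precisely this reliance on multi-variable solutions: because the two-valued constraint graph is already three-colorable, every further gain must be wrung out of three- and four-valued solutions, and keeping the resulting case analysis both finite and complete is the crux of the proof.
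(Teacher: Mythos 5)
Your high-level decomposition is the right one and in fact mirrors the paper's architecture: extract the forbidden ratios $2$, $\frac{3}{2}$, $\frac{4}{3}$ from the two-valued solutions, then exclude the ratios $3$ and $4$, then show $c(q)$ depends only on $(v_5(q),w_5(q))$ with the four residues at each valuation level receiving distinct colors, which is exactly the shape $c_{\Pi_5}$. Your observation that the two-valued constraints alone are properly colored by $v_2 \bmod 3$ (so that all further information must come from three- and four-valued solutions) is correct and correctly locates the difficulty, and your isomorphism count (the normalization $\pi_0=\mathrm{id}$ forces any color bijection to be the identity on the valuation-zero level, giving $(4!)^{\aleph_0}=2^{\aleph_0}$ nonisomorphic colorings) is sound.

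However, the proposal stops exactly where the proof begins: everything you defer to ``constructing suitable monochromatic-forcing solutions'' is the actual content of the proposition, and it is not routine. In the paper, merely proving $c(x)\ne c(3x)$ (Lemma~\ref{lemma:forb2}) requires a computer-generated backtracking argument of roughly fifty branches (Table~\ref{table:proof}), each step forced by solutions such as $(3,1,8,3)$ or $(16,16,4,9)$; the exclusion $c(x)\ne c(4x)$ (Lemma~\ref{lemma:forb3}) needs a further chain of forced colorings terminating in an uncolorable element $\frac{3}{4}x$. Once these are in hand, $3$-regularity is immediate from the four elements $x,2x,3x,4x$, so your proposed compactness argument and finite witness set $F$ are unnecessary scaffolding --- and, more to the point, you never exhibit $F$ or its case analysis, so part (ii) is not established. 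Likewise part (iii) requires three further lemmas, each an induction with nontrivial ad hoc constructions: $c(x)=c(2^m3^nx)$ iff $w_5(2^m3^n)\equiv 1 \pmod 5$ (Lemma~\ref{lemma:235-2}); $c(-x)=c(4x)$ (Lemma~\ref{lemma:-4}, which exploits that $c(3x)$ must lie in $\{c(5x),c(10x),c(15x),c(20x)\}$ and kills each case with a tailored solution); and the final induction over integers prime to $5$ split into four congruence classes (Lemma~\ref{lemma:mod5}), each class needing its own family of four solutions. None of this can be waved through: as you yourself note, keeping the case analysis finite and complete is the crux, and that crux is missing. As written, your proposal is a correct plan whose load-bearing steps are absent.
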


There are exactly $2^{\aleph_0}$ colorings of the form $c_{\Pi_5}$,
which establishes Proposition~\ref{proposition9}.

In all the examples above, either an equation has finitely many or
$2^{\aleph_0}$ minimal colorings of the nonzero rational numbers.  It
seems likely that this is always the case.

\begin{conjecture}
For every nonregular finite system $\mathcal{L}$ of linear homogeneous
equations, either $\Delta(\mathcal{L},\mathbb{Q})$ is finite or
$2^{\aleph_0}$. In particular, there is no $\mathcal{L}$ satisfying
$\Delta(\mathcal{L},\mathbb{Q})=\aleph_0$.
\end{conjecture}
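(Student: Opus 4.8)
The plan is to recast the conjecture as a statement in descriptive set theory about a single compact space of colorings, after which the weak dichotomy ``countable or continuum'' becomes automatic and the entire difficulty concentrates in excluding the countably infinite case.

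First I would fix notation. Since $\mathcal{L}$ is nonregular it fails to be $r$-regular for some $r$, so the chromatic number $k=k(\mathcal{L})$, the least number of colors in a coloring of $\mathbb{Q}\setminus\{0\}$ with no monochromatic solution to $\mathcal{L}$, is finite. The key elementary observation is that the minimal colorings are exactly the solution-free colorings with color set $\{0,\dots,k-1\}$: any such coloring uses at most $k$ colors, and by minimality of $k$ it cannot use fewer, so it uses exactly $k$. Let $M \subseteq \{0,\dots,k-1\}^{\mathbb{Q}\setminus\{0\}}$ be the set of these colorings, where the codomain carries the product (Cantor) topology. For each tuple of nonzero rationals solving $\mathcal{L}$, the set of colorings that are nonmonochromatic on that tuple depends on finitely many coordinates and so is clopen, and $M$ is the intersection of these clopen sets over all solutions; hence $M$ is closed, and therefore compact, metrizable, and second countable. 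Finally $\Delta(\mathcal{L};\mathbb{Q})$ is the number of orbits of the natural $S_k$-action on $M$ by permutation of colors; since every orbit has size at most $k!$, the cardinality of $M/S_k$ is finite, $\aleph_0$, or $2^{\aleph_0}$ exactly according to whether $|M|$ is finite, $\aleph_0$, or $2^{\aleph_0}$. Thus the conjecture is equivalent to the assertion that $M$ is never countably infinite.

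Next I would invoke the Cantor--Bendixson theorem: the compact metric space $M$ decomposes as $M = P \sqcup S$ with $P$ perfect and $S$ countable scattered. If $P \neq \emptyset$ then $P$ is a nonempty perfect compact metric space, so $|M|=2^{\aleph_0}$ and $\Delta(\mathcal{L};\mathbb{Q})=2^{\aleph_0}$; if $P=\emptyset$ then $M$ is scattered, hence countable. This already yields $|M|\le\aleph_0$ or $|M|=2^{\aleph_0}$, so the whole content of the conjecture is to rule out $M$ being countably infinite, equivalently to rule out $M$ being infinite and scattered: I must show that if $M$ is infinite then its perfect kernel $P$ is nonempty. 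The tool I would bring to bear is the scaling action arising from homogeneity. Because $\mathcal{L}$ is homogeneous, the multiplicative group $G=\mathbb{Q}\setminus\{0\}$ permutes solutions of $\mathcal{L}$, so it acts on $M$ by homeomorphisms via $(\lambda\cdot c)(x)=c(\lambda x)$. Homeomorphisms preserve the Cantor--Bendixson derivatives, so $P$ is $G$-invariant; were $P$ empty with $M$ infinite and scattered, the top nonvanishing derivative $F=M^{(\alpha)}$ would be a finite, nonempty, $G$-invariant set of ``maximally accumulated'' colorings. The plan is to contradict this by producing, from any single infinite $M$, infinitely many \emph{independent} solution-free local modifications: writing $G \cong \mathbb{Z}/2 \oplus \bigoplus_p \mathbb{Z}$ and grading $\mathbb{Q}\setminus\{0\}$ by the $p$-adic valuations occurring in $\mathcal{L}$, one expects every minimal coloring to be determined by its restrictions to countably many independent valuation levels, as is visible in the families $c_{p,n}$ and $c_{\Pi_p}$. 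If any genuine freedom exists at one level, the $G$-action transports it to infinitely many levels, and amalgamating the choices yields a continuous injection of the Cantor set $2^{\mathbb{N}}$ into $M$, i.e.\ a perfect set. Thus freedom is either absent, whence $M$ is finite, or present at infinitely many independent levels, whence $|M|=2^{\aleph_0}$, and the intermediate value $\aleph_0$ cannot occur.

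The main obstacle is exactly this last structural step: proving that an infinite $M$ must support infinitely many independent solution-free modifications rather than a rigid countable chain of them. The perfect-set machinery and the reduction to $|M|$ are soft and unconditional, but converting ``infinitely many minimal colorings'' into ``a Cantor scheme of minimal colorings'' requires a genuine classification of how minimal colorings vary along the valuation grading of a general homogeneous system, which is precisely the phenomenon that the paper's case analyses establish one equation at a time. I would expect that systems whose solution sets couple several primes nontrivially, so that the levels are not cleanly independent, are where the argument is hardest and where the conjecture genuinely resists a uniform proof.
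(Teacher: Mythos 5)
First, a point of calibration: the statement you are addressing is an open conjecture of the paper, not a theorem --- the paper offers no proof of it, so your attempt can only be judged on its own merits. Its soft half is correct and cleanly executed. Since $\mathcal{L}$ is nonregular, the minimal number of colors $k$ is finite; the set $M$ of solution-free colorings $\mathbb{Q}\setminus\{0\}\rightarrow\{0,\dotsc,k-1\}$ is an intersection of countably many clopen sets in the compact metrizable product space, hence closed; by minimality of $k$ every element of $M$ is surjective, so $\Delta(\mathcal{L};\mathbb{Q})$ is the number of $S_k$-orbits on $M$, and since all orbits have size at most $k!$ this has the same cardinality class as $\lvert M\rvert$. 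Cantor--Bendixson then gives, in ZFC and without any appeal to the continuum hypothesis, that $\lvert M \rvert$ is countable or exactly $2^{\aleph_0}$. This genuinely proves a weaker theorem (no cardinality strictly between $\aleph_0$ and $2^{\aleph_0}$ occurs) and correctly isolates the content of the conjecture: $M$ is never countably infinite.

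The gap is in the step that is supposed to exclude $\lvert M\rvert=\aleph_0$, and it is twofold. First, the invariance argument does not produce a contradiction on its own: if $M$ were countably infinite, the top nonvanishing Cantor--Bendixson derivative would indeed be a finite, nonempty set invariant under the scaling action of $G=\mathbb{Q}\setminus\{0\}$, but finite $G$-invariant sets of minimal colorings exist in abundance --- for instance the $G$-orbit of $c_{2,n}$ consists precisely of its $n$ cyclic color-shifts $x \mapsto (v_2(x)+j) \bmod n$, a finite invariant subset of $M$. So nothing is contradicted without further structural input. Second, the proposed further input --- that any ``genuine freedom'' at one valuation level is transported by $G$ to infinitely many independent levels, yielding a Cantor scheme in $M$ --- is false as a general principle. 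The paper's equation $E(4,3)$ is a counterexample to the heuristic: its minimal colorings are the six colorings $c_{\pi}$, indexed by a single permutation coupling the two classes $Q_0$ and $Q_1$; there is genuine freedom (six nonisomorphic colorings, so $\Delta>1$), yet the scaling action merely permutes these finitely many colorings among themselves rather than replicating the choice independently at infinitely many levels, and $\Delta(E(4,3);\mathbb{Q})$ is finite. Thus the dichotomy ``no freedom $\Rightarrow$ finite; some freedom $\Rightarrow$ independent recurring freedom $\Rightarrow$ continuum'' requires exactly the classification of how minimal colorings vary along the valuation grading that the paper establishes only one equation at a time (Lemma~\ref{lemma:fr}, Lemma~\ref{lemma:muliple4}, Lemma~\ref{lemma:235-2}), and no general form of it is known. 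You acknowledge this yourself; as written, the proposal is a correct reduction plus an unproved structural conjecture, not a proof.
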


\subsection{Minimal colorings over the real numbers}

When studying colorings of the real numbers, we must be careful about
which axioms we choose for set theory. In this subsection, we assume
that $q$ is any rational number other than $-1$, $0$, or $1$.

Radoi\v ci\'c and the second author \cite{FoRa} showed that it is
independent of Zermelo-Fraenkel (ZF) set theory that the equation
$E(q,3)$ is $3$-regular over $\mathbb{R}$. They also showed that it is
independent of ZF that $E(2,4)$ is $4$-regular over $\mathbb{R}$. We
extend these results by showing that for $n=5$ and $n=6$, it is
independent of ZF that $E(2,n)$ is $n$-regular over $\mathbb{R}$. Also,
we show that it is independent of ZF that $x_1+x_2+x_3=4x_4$ is
$4$-regular over $\mathbb{R}$.

They also show that in the Zermelo-Fraenkel-Choice (ZFC) system of
axioms, if $r$ is a positive integer and $\mathcal{L}$ is a finite
system of linear homogeneous equations with rational coefficients, then
$\mathcal{L}$ is $r$-regular over $\mathbb{R}$ if and only if
$\mathcal{L}$ is $r$-regular over $\mathbb{Z}$. It follows, in ZFC, that
the equation $E(q,3)$ is not $3$-regular over $\mathbb{R}$. In
Section~\ref{section:real}, we prove in ZFC that for every integer $n
\geq 3$, there are $2^{2^{\aleph_0}}$ different $n$-colorings of the
nonzero real numbers without a monochromatic solution to $E(q,n)$.
Hence, in ZFC, we have $\Delta(E,\mathbb{R})=2^{2^{\aleph_0}}$ for
$E=E(q,3)$ or $E=E(2,n)$ with $n \in \{3,4,5,6\}$.

Solovay \cite{So} proved that the following axiom is relatively
consistent with ZF.

\begin{axiom}[LM]
Every subset of the real numbers is Lebesgue measurable.
\end{axiom}

Notice that the axiom LM is not consistent with ZFC because, with the
axiom of choice, there are subsets of $\mathbb{R}$ that are not Lebesgue
measurable.

The following lemma is useful in proving in ZF+LM that a given linear
equation is $r$-regular over $\mathbb{R}$ for appropriate $r$.

\begin{lemma} \label{lemma:lm}
Suppose $c$ is a coloring of the nonzero real numbers that uses at most
countably many colors and there are positive real numbers $a$, $b$, $d$
such that $\log_a b$ is irrational and $c(x)=c(ax)=c(bx)\ne c(dx)$ for
all nonzero real numbers $x$.  Then there is a color class of $c$ that
is not Lebesgue measurable.
\end{lemma}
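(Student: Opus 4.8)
The plan is to pass from the multiplicative to the additive structure via the logarithm, reduce to an irrational rotation of a circle, and invoke a zero-one law for measurable invariant sets. Since multiplication by the positive numbers $a$, $b$, $d$ preserves the sign of $x$, it suffices to exhibit a single color class whose intersection with $\mathbb{R}_{>0}$ is non-measurable; because $\mathbb{R}_{>0}$ is itself measurable, this already forces the full color class to be non-measurable. So I would restrict attention to the positive reals and set $\tilde c(y) = c(e^{y})$ for $y \in \mathbb{R}$. Writing $\alpha = \log a$, $\beta = \log b$, and $\gamma = \log d$, the hypotheses translate to the statements that $\tilde c$ is invariant under translation by $\alpha$ and by $\beta$, while $\tilde c(y) \ne \tilde c(y+\gamma)$ for every $y$. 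Each color class of $\tilde c$ is therefore invariant under the subgroup $G = \alpha\mathbb{Z} + \beta\mathbb{Z}$, and $G$ is dense in $\mathbb{R}$ precisely because $\beta/\alpha = \log_a b$ is irrational; this is the single place the irrationality hypothesis enters.

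Next I would argue by contradiction, assuming that every color class is Lebesgue measurable. Since each class is $\alpha$-periodic, it descends to a measurable subset of the circle $\mathbb{T} = \mathbb{R}/\alpha\mathbb{Z}$, and invariance under translation by $\beta$ becomes invariance under rotation of $\mathbb{T}$ by $\beta \bmod \alpha$. As $\beta/\alpha$ is irrational, this is an irrational rotation, which is ergodic, so any measurable invariant subset of $\mathbb{T}$ has measure $0$ or full measure. (If one prefers to avoid citing ergodicity outright, the same conclusion follows from a one-line Fourier computation: for an invariant indicator function every nonzero Fourier coefficient is annihilated by the factor $e^{2\pi i n \beta/\alpha} - 1$, so the set is almost everywhere constant.) This is exactly where the countability of the palette is essential: the at most countably many color classes partition $\mathbb{T}$, so by countable additivity their measures sum to the total measure of $\mathbb{T}$; since each is $0$ or full, exactly one class, of some color $i^{*}$, has full measure and every other class is null.

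Finally I would use the translate by $\gamma$ to derive the contradiction. Pulling back to $\mathbb{R}$, the set $N = \{\, y : \tilde c(y) \ne i^{*} \,\}$ is Lebesgue-null, hence $N \cup (N - \gamma)$ is null and in particular is not all of $\mathbb{R}$. So there exists a point $y$ with $\tilde c(y) = i^{*}$ and $\tilde c(y+\gamma) = i^{*}$, giving $\tilde c(y) = \tilde c(y+\gamma)$; this contradicts $\tilde c(y) \ne \tilde c(y+\gamma)$, which was required to hold for every $y$. The contradiction shows that the color classes cannot all be measurable, and by the opening reduction this yields a non-measurable color class of the original coloring $c$, proving the lemma.

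I expect the main obstacle to be the zero-one law step, namely recognizing that density of $G$ (from the irrationality of $\log_a b$) together with the measurability assumption forces each invariant class down to the circle, where irrational-rotation ergodicity applies and countability pins down a unique full-measure color. Everything else—the logarithmic change of variables, the reduction to $\mathbb{R}_{>0}$, and the closing null-set translation argument—should be routine once this ergodic core is in place.
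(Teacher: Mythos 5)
Your proof is correct, but it takes a genuinely different route from the paper's. The paper also argues by contradiction and uses countable additivity to extract a color class $C$ of positive Lebesgue measure, but then it stays entirely in the multiplicative setting: by a multiplicative version of Steinhaus's theorem, $C/C$ contains an interval around $1$, while the coloring hypothesis forces $C/C$ to be disjoint from $\{d a^m b^n : m,n \in \mathbb{Z}\}$, a set containing points arbitrarily close to $1$ precisely because $\log_a b$ is irrational --- an immediate contradiction. You instead take logarithms, descend each ($\alpha$-periodic, measurable) color class to the circle $\mathbb{R}/\alpha\mathbb{Z}$, and replace Steinhaus with ergodicity of the irrational rotation by $\beta$: the zero-one law, combined with countable additivity on a space of \emph{finite} measure, pins down a unique co-null color class, and translating its null complement by $\gamma$ gives the contradiction. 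What each buys: the paper's argument is shorter and never needs to identify a full-measure class, but it invokes Steinhaus's theorem as a black box; your argument avoids Steinhaus entirely, is self-contained once the one-line Fourier computation is included, and yields more structural information (exactly one color class would be co-null). Both proofs use the irrationality hypothesis for the same underlying reason --- density of $\alpha\mathbb{Z}+\beta\mathbb{Z}$ in $\mathbb{R}$, equivalently ergodicity of the rotation --- and both use countability of the palette at the analogous juncture, so the arguments are cousins, with yours trading a quotient-set theorem for an ergodic-theoretic zero-one law.
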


In ZF+LM, if $n\in \{3,4,5,6\}$, then for every $n$-coloring $c$ of the
nonzero real numbers without a monochromatic solution to $E(2,n)$, we
have $c(x)=c(3x)=c(5x)\ne c(2x)$ for all non-zero rational numbers $x$,
and $\log_3 5$ is irrational. Hence, by Lemma~\ref{lemma:lm}, the
equation $E(2,n)$ is $n$-regular in ZF+LM for $n \in \{3,4,5,6\}$. For
every $4$-coloring $c$ of the nonzero real numbers without a
monochromatic solution to the equation $x_1+x_2+x_3=4x_3$, we have
$c(x)=c(6x)=c(11x)\ne c(2x)$ for all nonzero rational numbers $x$ and
$\log_6 11$ is irrational. Hence, by Lemma~\ref{lemma:lm}, the equation
$x_1+x_2+x_3=4x_4$ is $4$-regular in ZF+LM.

\section{Minimal colorings over the rationals} \label{section2}
The following straightforward lemma \cite{FoKl} is useful in proving
that certain colorings are free of monochromatic solutions to particular
linear equations.

\begin{lemma} \label{lemma:valuation}
Suppose $t_1,\dotsc,t_n$ are nonzero rational numbers and $p$ is a prime
number such that $v_p(t_1) \leq v_p(t_2) \leq \dotsb \leq v_p(t_n)$ and
$\sum_{i=1}^n t_i=0$.  Then $v_p(t_1)=v_p(t_2)$.
\end{lemma}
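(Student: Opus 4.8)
The plan is to argue by contradiction using the two basic properties of the $p$-adic valuation: multiplicativity, and the non-archimedean (ultrametric) inequality $v_p(x+y) \ge \min\{v_p(x),v_p(y)\}$, valid for all rationals $x,y$ under the convention $v_p(0)=+\infty$. I would first record this inequality, since it is the engine of the argument. To verify it when $x,y \ne 0$, write $x = p^{v_p(x)} a_1/b_1$ and $y = p^{v_p(y)} a_2/b_2$ with $p \nmid a_i b_i$; assuming without loss of generality that $v_p(x) \le v_p(y)$, factoring out $p^{v_p(x)}$ gives $x + y = p^{v_p(x)} \frac{a_1 b_2 + p^{v_p(y)-v_p(x)} a_2 b_1}{b_1 b_2}$, whose denominator is coprime to $p$ and whose numerator is an integer, so $v_p(x+y) \ge v_p(x) = \min\{v_p(x),v_p(y)\}$. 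By induction the inequality extends to any finite sum: $v_p\!\left(\sum_i s_i\right) \ge \min_i v_p(s_i)$.

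With this in hand the lemma follows in one line. Suppose for contradiction that the conclusion fails, so that $v_p(t_1) < v_p(t_2)$. Since the valuations are nondecreasing, $v_p(t_2) \le v_p(t_i)$ for every $i \ge 2$, and hence $\min_{i \ge 2} v_p(t_i) = v_p(t_2)$. The hypothesis $\sum_{i=1}^n t_i = 0$ gives $t_1 = -\sum_{i=2}^n t_i$, so applying the extended inequality (and noting $v_p(-z)=v_p(z)$) yields $v_p(t_1) = v_p\!\left(\sum_{i=2}^n t_i\right) \ge \min_{i\ge 2} v_p(t_i) = v_p(t_2)$, contradicting $v_p(t_1) < v_p(t_2)$. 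This completes the proof.

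There is essentially no hard step here; the lemma is elementary and the only point requiring minor care is the bookkeeping around zero sums. Rather than tracking whether intermediate partial sums vanish, one simply adopts the convention $v_p(0)=+\infty$, under which the non-archimedean inequality holds unconditionally and the induction goes through cleanly. An equivalent and perhaps even slicker route, which I would mention as an alternative, is to set $m = v_p(t_1) = \min_i v_p(t_i)$, factor $t_i = p^m s_i$ so that each $s_i$ is $p$-integral with $\sum_i s_i = 0$, and reduce modulo $p$: the strict inequality $v_p(t_1) < v_p(t_2)$ forces $\bar s_i = 0$ in $\mathbb{F}_p$ for all $i \ge 2$ while $\bar s_1 \ne 0$, so $\sum_i \bar s_i = \bar s_1 \ne 0$ contradicts $\sum_i s_i = 0$.
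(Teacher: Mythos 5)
Your proof is correct. Note that the paper itself gives no proof of this lemma: it is quoted as a ``straightforward lemma'' from the reference [FK05] (Fox--Kleitman), so there is no in-paper argument to compare against. Your argument supplies exactly what one would expect: the non-archimedean inequality $v_p(x+y) \geq \min\{v_p(x),v_p(y)\}$, extended to finite sums with the convention $v_p(0)=+\infty$, applied to $t_1 = -\sum_{i\geq 2} t_i$ to contradict $v_p(t_1) < v_p(t_2)$. The handling of possibly vanishing partial sums via the $v_p(0)=+\infty$ convention is the one point where care is needed, and you address it explicitly. Your alternative argument --- factoring out $p^{m}$ with $m = v_p(t_1)$ and reducing the resulting $p$-integral relation modulo $p$ --- is equally valid and arguably closer in spirit to how such lemmas are used elsewhere in the paper (e.g., the colorings $C_p$ built from $w_p$). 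Either version would serve as a complete proof of the cited lemma.
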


The following proposition is due to the second author and Radoi\v{c}i\'c
~\cite{FoRa}.

\begin{proposition} \label{proposition:E2n}
For each integer $n>1$, the $n$-coloring $c_{2,n}$ is free of
monochromatic solutions to $E(2,n)$.
\end{proposition}
\begin{proof}
For $n>1$, if $x_0+2x_1+\dotsb+2^{n-2}x_{n-2}-2^{n-1}x_{n-1}=0$ is a
solution to $E(2,n)$ in the nonzero rationals, then we have by
Lemma~\ref{lemma:valuation} that for some $i$ and $j$ with $0 \leq i <j
< n$,
\[ v_2(2^{i}x_i)=v_2(2^{j}x_{j}). \]
It follows that $v_2(x_i)=(j-i)+v_2(x_j)$ and $v_2(x_j)-v_2(x_i)
\in \{1,\dotsc,n-1\}$. Therefore, $v_2(x_j)-v_2(x_i)$ is not a
multiple of $n$. Hence, $x_i$ and $x_j$ are not the same color,
and the coloring $c_{2,n}$ is free of monochromatic solutions to
$E(2,n)$.
\end{proof}

They \cite{FoRa} also prove the following structural result for
$3$-colorings of the nonzero rational numbers without a monochromatic
solution to $E(q,3)$.

\begin{lemma} \label{lemma:fr}
Let $x$ and $q$ be nonzero rational numbers with $q \ne \pm 1$, $m$ and
$n$ be integers, and let $a=\frac{q+1}{q^2}$ and $b=q(q-1)$. For every
$3$-coloring $c$ of the nonzero rational numbers without a monochromatic
solution to $E(q,3)$, we have $c(x)=c(a^mb^nx)$ if and only if $m+n$ is
a multiple of $3$.
\end{lemma}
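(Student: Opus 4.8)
The plan is to exploit three particular one-parameter families of solutions to $E(q,3)$, namely the equation $x_0 + q x_1 = q^2 x_2$, each of which has two of its three coordinates equal. First I would check the identities $1+q = q^2 a$, $b + q = q^2$, and $q\cdot ab = q^2 - 1$, which say exactly that $(x,x,ax)$, $(bx,x,x)$, and $(x,abx,x)$ are solutions for every nonzero rational $x$ (all coordinates are nonzero because $q \ne 0, \pm 1$ forces $a$, $b$, $ab \ne 0$). Since $c$ has no monochromatic solution and the first two listed coordinates of each triple already share a color, these solutions yield, for every nonzero $x$, the relations $c(ax) \ne c(x)$, $c(bx) \ne c(x)$, and $c(abx) \ne c(x)$. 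Replacing $x$ by $a^m b^n x$ throughout, I obtain for all integers $m,n$ the three difference relations $c(a^{m+1}b^n x) \ne c(a^m b^n x)$, $c(a^m b^{n+1} x) \ne c(a^m b^n x)$, and $c(a^{m+1}b^{n+1} x) \ne c(a^m b^n x)$.

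Next I would recast the problem on the integer lattice. Fixing $x$, define $f \colon \mathbb{Z}^2 \to \{0,1,2\}$ by $f(m,n) = c(a^m b^n x)$; the three relations say that $f$ takes distinct values at $(m,n)$ and at each of $(m+1,n)$, $(m,n+1)$, $(m+1,n+1)$. In other words, $f$ is a proper $3$-coloring of the triangular lattice whose edges are the horizontal, vertical, and $(1,1)$-diagonal steps. The key deduction is a rigidity statement: for each $(m,n)$, writing $A = f(m,n)$ and $B = f(m+1,n+1)$, the diagonal relation gives $A \ne B$, while the remaining relations force both $f(m+1,n)$ and $f(m,n+1)$ to avoid $A$ (the horizontal and vertical edges at $(m,n)$) and to avoid $B$ (the edges meeting $(m+1,n+1)$). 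With only three colors available, both must therefore equal the one remaining color, so $f(m+1,n) = f(m,n+1)$ for all $m,n$. Iterating this identity along an antidiagonal shows that $f(m,n)$ depends only on $s = m+n$, say $f(m,n) = g(s)$ for a function $g \colon \mathbb{Z} \to \{0,1,2\}$.

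Finally I would pin down $g$. The horizontal relation becomes $g(s+1) \ne g(s)$ and the diagonal relation becomes $g(s+2) \ne g(s)$; together with $g(s+1) \ne g(s+2)$ these show that any three consecutive values $g(s), g(s+1), g(s+2)$ are pairwise distinct, hence a permutation of $\{0,1,2\}$. This immediately gives $g(s+3) = g(s)$ and shows $g$ is injective on residues modulo $3$, so $g(s) = g(0)$ if and only if $3 \mid s$. Translating back, $c(a^m b^n x) = c(x)$ if and only if $3 \mid (m+n)$, as claimed.

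I expect the main obstacle to be the rigidity step in the second paragraph: everything downstream is forced once one proves that $f$ factors through $m+n$, and this is precisely the place where the availability of only three colors, rather than the bare pairwise-difference relations, is essential. I would also want to confirm that multiplicative coincidences among the $a^m b^n$ cannot disturb the argument, but since $f$ is defined as a composition it is automatically well defined on all of $\mathbb{Z}^2$, so the lattice argument requires no multiplicative independence hypothesis on $a$ and $b$.
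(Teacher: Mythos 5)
Your proof is correct and takes essentially the same route as the paper: the same three solutions $(x,x,ax)$, $(bx,x,x)$, $(x,abx,x)$ yield the forbidden ratios $a$, $b$, $ab$, and your triangular lattice is exactly the paper's Cayley graph $\Gamma(\mathbb{Z}^2,S)$ with $S=\{(1,0),(0,1),(1,1)\}$. The only difference is that you spell out the rigidity step ($f(m+1,n)=f(m,n+1)$, hence $f$ depends only on $m+n$, hence period $3$), which the paper compresses into the phrase ``by induction, it is a straightforward check.''
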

\begin{proof}
Since $x+q(x)=q^2(ax)$, $ax$ must be a different color than $x$.  Since
$bx+q(x)=q^2(x)$, then $bx$ must be a different color than $x$. Since
$x+q(abx)=q^2(x)$, then $abx$ must be a different color than $x$. Hence,
$c(x) \ne c(rx)$ for $r \in \{a,b,ab\}$.

Recall that for a group $G$ and subset $S \subset G$, the Cayley graph
$\Gamma(G,S)$ has vertex set $G$ and two elements $x$ and $y$ are
adjacent if there is an element $s \in S$ such that $x=sy$ or $y=sx$.

We associate each rational number $a^mb^nx$ (with $m$ and $n$ integers)
with the lattice point $(m,n)$. Let $S=\{(1,0),(0,1),(1,1)\}$ and
consider the Cayley graph $\Gamma(\mathbb{Z}^2,S)$. Define the
$3$-coloring $\chi$ of $\mathbb{Z}^2$ by $\chi(m,n)=c(a^mb^nx)$. Since
$c(rx) \ne c(x)$ for $r \in \{a,b,ab\}$, then $\chi$ is a proper
$3$-coloring of the vertices of $\Gamma(\mathbb{Z}^2,S)$. By induction,
it is a straightforward check that there is only one proper coloring of
$\Gamma(\mathbb{Z}^2,S)$ up to isomorphism and this coloring is given by
$\chi(m,n) \equiv m+n \pmod{3}$. Hence, $c(x)=c(a^mb^nx)$ if and only if
$m+n$ is a multiple of $3$.
\end{proof}

\subsection{The minimal coloring for $E(2,3)$}
We prove that $c_{2,3}$ is the only minimal coloring of the nonzero
rational numbers, up to isomorphism, for the equation $E(2,3)$. By
Proposition~\ref{proposition:E2n}, we know that the $3$-coloring
$c_{2,3}$ is free of monochromatic solutions to $E(2,3)$. By
Lemma~\ref{lemma:fr}, the numbers $2$, $3$, and $4$ must be all
different colors, so $E(2,3)$ is $2$-regular. Hence $c_{2,3}$ is a
minimal coloring of the nonzero rational numbers for $E(2,3)$.

\begin{proposition} \label{proposition:E23}
The only minimal coloring of the nonzero rational numbers for $E(2,3)$
is $c_{2,3}$.
\end{proposition}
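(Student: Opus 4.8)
The plan is to show that every $3$-coloring $c$ of $\mathbb{Q}\setminus\{0\}$ with no monochromatic solution to $E(2,3)$ satisfies $c(y)=\sigma(v_2(y)\bmod 3)$ for a single bijection $\sigma$ from $\mathbb{Z}/3\mathbb{Z}$ onto the color set, which is exactly the statement that $c$ is isomorphic to $c_{2,3}$.

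First I would extract the local structure from Lemma~\ref{lemma:fr}. For $q=2$ we have $a=\frac34$ and $b=2$, so $\langle a,b\rangle=\langle 2,3\rangle=:H$, and $a^mb^nx=2^{\,n-2m}3^{m}x$. Since $m+n\equiv n-2m\pmod 3$, Lemma~\ref{lemma:fr} says precisely that for $x,x'$ in the same coset of $H$ one has $c(x)=c(x')$ if and only if $v_2(x)\equiv v_2(x')\pmod 3$. Thus on each coset $C$ of $H$ the coloring has the form $c(y)=\sigma_C(v_2(y)\bmod 3)$ for some bijection $\sigma_C$; in particular $c(y),c(2y),c(4y)$ are always three distinct colors, recovering the $2$-regularity noted above. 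It remains to prove that the ``phase'' $\sigma_C$ does not depend on $C$.

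The cosets of $H$ in $\mathbb{Q}\setminus\{0\}$ are indexed by the sign and the valuations $v_p$ for primes $p\ge 5$, so $\mathbb{Q}^\times/H$ is generated by $-1$ and the primes $p\ge 5$. Hence it suffices to show $c(py)=c(y)$ for every prime $p\ge5$ and $c(-y)=c(y)$ for all $y$, since these identities say exactly $\sigma_{pC}=\sigma_C$ and $\sigma_{-C}=\sigma_C$. For the key step I would prove $c(py)=c(y)$ by the pigeonhole principle: because $c(y),c(2y),c(4y)$ exhaust the three colors, it is enough to show $c(py)\ne c(2y)$ and $c(py)\ne c(4y)$. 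Each such inequality I would extract from a single solution of $E(2,3)$: if $u,v,w$ solve $u+2v=4w$, two of them lie in the same coset of $H$ with equal $2$-adic valuation modulo $3$ (hence receive the same color), and the third lies in the coset $pC$, then the absence of a monochromatic solution forces the third color to differ. Concretely, for $p=5$ and $y=1$ the solutions $(16,2,5)$, $(12,4,5)$, $(10,1,3)$, and $(32,4,10)$ yield $\sigma_{5H}(0)\ne\sigma_H(1)$, $\sigma_{5H}(0)\ne\sigma_H(2)$, $\sigma_{5H}(1)\ne\sigma_H(0)$, and $\sigma_{5H}(1)\ne\sigma_H(2)$, which pin down $\sigma_{5H}(0)=\sigma_H(0)$ and $\sigma_{5H}(1)=\sigma_H(1)$, whence $\sigma_{5H}=\sigma_H$ by bijectivity. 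Scaling any such solution by an arbitrary $y$ produces the corresponding identity on the coset $C=H\bar y$, so $\sigma_{pC}=\sigma_C$ for every $C$; analogous sign-twisted solutions handle $-1$. Assembling these gives a single bijection $\sigma$ with $c(y)=\sigma(v_2(y)\bmod 3)$, that is, $c\cong c_{2,3}$.

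The main obstacle is uniformity over all primes $p\ge 5$ (and the sign case): I must guarantee, for every such $p$, enough solutions of $u+2v=4w$ in which one variable is $p$ times a $\{2,3\}$-number while the other two are $\{2,3\}$-numbers with prescribed equal valuations modulo $3$. This amounts to representing small multiples of $p$ as $2$-adically controlled combinations $x_0+2x_1$ of $3$-smooth numbers, one such representation for each target residue, and verifying that these representations exist for all $p$. I expect this to be the one place requiring genuine case analysis rather than a single clean identity; everything else is bookkeeping with Lemma~\ref{lemma:fr} and the three-color pigeonhole.
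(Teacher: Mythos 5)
Your setup coincides with the paper's: Lemma~\ref{lemma:fr} with $q=2$ gives $c(2^m3^nx)=c(x)$ iff $3\mid m$, so $c(y),c(2y),c(4y)$ are always three distinct colors, and the whole problem reduces to showing $c(py)=c(y)$ for odd $p$ and $c(-y)=c(y)$; your pigeonhole step and your explicit solutions for $p=5$ are exactly the paper's kind of forcing. The genuine gap is the step you yourself flag as an ``obstacle'' and leave open: general $p$. Worse, the route you propose for it cannot work. You ask, for each prime $p\geq 7$, for solutions of $x_0+2x_1=4x_2$ in which one entry is $p$ times a $3$-smooth rational and the other two entries are $3$-smooth rationals. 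Clearing ($3$-smooth) denominators, such a solution forces $p\cdot s=2^a3^b\pm 2^c3^d$ for some $3$-smooth integer $s$, and after cancelling common factors this means $p$ itself must have one of the forms $2^a\pm 3^d$, $(3^b\pm 1)/2^e$, or $(2^a\pm 1)/3^f$. There are only $O\bigl((\log N)^2\bigr)$ such integers up to $N$, so almost every prime admits no solution of the shape you require (already $p=53$ is of none of these forms). No amount of per-prime case analysis can produce representations that do not exist.

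The missing idea, which is how the paper closes the argument, is to strengthen the induction so that the auxiliary entries need not be $3$-smooth: induct on \emph{all odd integers} $p$, with hypothesis $c(p'x)=c(x)$ for every positive odd $p'<p$ and \emph{every} rational $x$. Then $c(4(p-2)x)=c(4x)$ and $c(2(p-4)x)=c(2x)$ are available (apply the hypothesis to $p'=p-2$ at the point $4x$, and to $p'=p-4$ at the point $2x$), and the two identities $4(p-2)x+2\cdot 4x=4(px)$ and $16x+2\cdot 2(p-4)x=4(px)$ force $c(px)\notin\{c(4x),c(2x)\}$, hence $c(px)=c(x)$ -- a single clean pair of identities valid for every odd $p>3$, with no case analysis at all. (Your reduction to prime generators of $\mathbb{Q}^\times/H$ can be retained, but then the auxiliaries must be drawn from cosets already handled, which is this same induction in disguise.) Finally, your sign case is only a one-clause remark; the paper settles it with the solutions $(10x,-x,2x)$ and $(12x,-8x,-x)$, and note that these require the odd-integer result first (e.g.\ $c(10x)=c(2x)$ uses $c(5\cdot 2x)=c(2x)$), so that step must come last.
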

\begin{proof}
Suppose $c$ is a $3$-coloring of the nonzero rational numbers without a
monochromatic solution to $E(2,3)$. By Lemma~\ref{lemma:fr}, for every
nonzero rational number $x$ and integers $m$ and $n$, we have
$c((\frac{3}{4})^m2^nx)=c(x)$ if and only if $m+n$ is a multiple of
$3$. Equivalently, for every nonzero rational number $x$ and integers
$m$ and $n$, $c(2^m3^nx)=c(x)$ if and only if $m$ is a multiple of $3$.

For nonzero rational numbers $x$ and $r$ with $r$ positive, we now show
that $c(x)=c(rx)$ if and only if $v_2(r) \equiv 0 \pmod{3}$. By
induction, it suffices to prove that $c(x)=c(px)$ for every odd positive
integer $p$ and nonzero rational number $x$. For $p=1$ or $3$, we have
already established that $c(x)=c(px)$ for every nonzero rational number
$x$.

So let $p$ be an odd integer greater than $3$ and suppose that for every
positive odd integer $p'<p$ and rational number $x$, $c(p'x)=c(x)$. The
numbers $px$ and $4x$ are different colors because otherwise
$(x_0,x_1,x_2)=(4(p-2)x,4x,px)$ is a monochromatic solution to
$E(2,3)$. We have $px$ and $2x$ are different colors because otherwise
$(16x,2(p-4)x,px)$ is a monochromatic solution to $E(2,3)$. Since
$c(x)$, $c(2x)$, and $c(4x)$ are all different colors, then
$c(px)=c(x)$. By induction, for nonzero rational numbers $x$ and $r$
with $r$ positive, we have $c(x)=c(rx)$ if and only if $v_2(r) \equiv 0
\pmod{3}$.

To finish the proof, we need to show that $c(x)=c(-x)$ for every nonzero
rational number $x$. Since $(10x,-x,2x)$ is a solution to $E(2,3)$ and
$10x$ and $2x$ are the same color, then $-x$ and $2x$ are different
colors. Since $(12x,-8x,-x)$ is a solution to $E(2,3)$, $-8x$ and $-x$
are the same color, and $12x$ is the same color as $4x$, then $-x$ and
$4x$ are differently colored. Since $x$, $2x$, and $4x$ are all
different colors, then $c(-x)=c(x)$. Therefore, $c_{2,3}$ is the only
minimal coloring, up to isomorphism, of the nonzero rational numbers
without a monochromatic solution to $E(2,3)$.
\end{proof}

Using a very similar argument to the proof of
Proposition~\ref{proposition:E23}, it is not difficult to show that
there are only two minimal colorings of the positive integers for the
equation $E(2,3)$. These two minimal colorings consist of $c_{2,3}$ with
its domain restricted to the positive integers and a coloring $c_{2,3}'$
that is identical to $c_{2,3}$ with its domain restricted to the
positive integers except that the color of $1$ is different. Formally,
$c_{2,3}'\colon \mathbb{N} \rightarrow \{0,1,2\}$ is the coloring of the
positive integers such that $c_{2,3}'(1)=2$ and $c_{2,3}'(n)=c_{2,3}(n)$
for $n>1$.

\begin{proposition}
The only two minimal colorings of the positive integers for the equation
$E(2,3)$ are the colorings $c_{2,3}$ with its domain restricted to the
positive integers and $c_{2,3}'$.
\end{proposition}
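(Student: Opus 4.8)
The plan is to mirror the proof of Proposition~\ref{proposition:E23}, tracking carefully which of its forcing solutions survive the restriction to the positive integers; the key phenomenon is that $1$ participates in very few solutions of $E(2,3)$, so its color is underdetermined. Throughout, write $E(2,3)$ as $x_0+2x_1=4x_2$.

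First I would check that both claimed colorings are minimal over the positive integers. The restriction of $c_{2,3}$ is free of monochromatic solutions since $c_{2,3}$ is. For $c_{2,3}'$ one verifies that recoloring $1$ creates no monochromatic solution: any solution containing $1$ must have $1$ in the role of $x_1$ (the role $x_0=1$ is impossible by parity, and $x_2=1$ forces $(2,1,1)$, which again has $x_1=1$), whence $x_0=4x_2-2$ has $v_2=1$ and color $1\ne 2=c_{2,3}'(1)$. Both colorings use three colors, which is minimal because the solutions $(4,2,2)$, $(4,4,3)$, and $(2,3,2)$ force $c(2),c(3),c(4)$ to be pairwise distinct, so $E(2,3)$ is $2$-regular over the positive integers. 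Finally $c_{2,3}$ and $c_{2,3}'$ are non-isomorphic: an isomorphism permutes colors and hence preserves the partition into color classes, but these two colorings induce different partitions (the element $1$ lies in different classes).

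Next, for an arbitrary $3$-coloring $c$ of the positive integers with no monochromatic solution, I would re-derive the inequalities of Lemma~\ref{lemma:fr} using only integer solutions: $(2x,x,x)$ gives $c(x)\ne c(2x)$, $(4y,4y,3y)$ gives $c(4y)\ne c(3y)$, and $(2y,3y,2y)$ gives $c(2y)\ne c(3y)$. These show $c(2x),c(3x),c(4x)$ are pairwise distinct for every $x$, so after relabeling colors we may assume $c$ agrees with $c_{2,3}$ on $\{2,3,4\}$. I would then run the induction of Proposition~\ref{proposition:E23}: its inductive solutions $(4(p-2)x,4x,px)$ and $(16x,2(p-4)x,px)$ already lie in the positive integers for odd $p\ge 5$, so the invariance $c(px)=c(x)$ under odd multiples propagates unchanged; together with analogous integer solutions establishing the base case $c(3x)=c(x)$ and the periodicity $c(8x)=c(x)$ for $x\ge 2$, this would give $c(x)\equiv v_2(x)\pmod 3$ for every integer $x\ge 2$, i.e.\ $c=c_{2,3}$ on $\{2,3,4,\dots\}$.

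The main obstacle is precisely this propagation at the bottom of the range. Over $\mathbb{Q}$, Lemma~\ref{lemma:fr} is applied through $a=\frac34$, which divides by powers of $2$ and so implicitly uses non-integer values; over the positive integers each required color identity must instead be forced by a genuine integer solution, and the relevant solutions exist for arguments $\ge 2$ but break down at $x=1$. This is not an artifact: the identity $c(x)=c(3x)$, equivalently $c(x)\ne c(4x)$, genuinely fails at $x=1$ for $c_{2,3}'$, which is exactly the source of the second coloring, so I would isolate $x=1$ and prove the structure only for $x\ge 2$. It then remains to pin down $c(1)$. With $c$ fixed on $\{2,3,4,\dots\}$, every solution containing $1$ has $x_1=1$ and $x_0=4x_2-2$ with $c(x_0)=1=c(2)$; taking $x_2=2$ gives $(6,1,2)$, which is monochromatic precisely when $c(1)=1$, whereas no solution containing $1$ excludes either remaining color. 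Hence $c(1)\in\{0,2\}$, giving exactly $c_{2,3}$ and $c_{2,3}'$, so these are the only two minimal colorings.
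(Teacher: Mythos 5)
Your overall skeleton is the right one, and it is what the paper intends (the paper omits this proof entirely, saying only that it follows by an argument very similar to that of Proposition~\ref{proposition:E23}). Your verification that $c_{2,3}'$ is solution-free (every solution through $1$ has $1$ in the role of $x_1$, hence $x_0=4x_2-2$ of color $1$), the $2$-regularity via $(4,2,2)$, $(4,4,3)$, $(2,3,2)$, the non-isomorphism remark, and the endgame pinning $c(1)\neq c(2)$ by the solution $(6,1,2)$ are all correct.

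The gap is in the middle step. You claim that ``analogous integer solutions'' establish the base case $c(3x)=c(x)$ (equivalently $c(x)\neq c(4x)$) and the periodicity $c(8x)=c(x)$ for every $x\geq 2$ \emph{before} running the induction on odd multipliers, and that only $x=1$ is problematic. For odd $x\geq 3$ this is provably impossible: any solution of $E(2,3)$ all of whose entries are multiples of $x$ scales down to a solution in positive integers, so the partial coloring $d(kx)=c_{2,3}'(k)$ of the multiples of $x$ satisfies every constraint supported on $x\mathbb{Z}$ yet has $d(3x)\neq d(x)$. Hence solutions among multiples of $x$ (which is what the integer analogues of the ratio arguments behind Lemma~\ref{lemma:fr} amount to) can never force $c(3x)=c(x)$ for odd $x$; this identity must come from numbers outside $x\mathbb{Z}$, i.e., from the global induction itself. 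As you have structured the argument this is circular: each inductive step $(4(p-2)x,4x,px)$, $(16x,2(p-4)x,px)$ concludes $c(px)=c(x)$ only by using $c(x)\neq c(4x)$ at that same $x$, and in addition your induction, quantified over $x\geq 2$, never determines the colors $c(n)$ of odd $n$ at all, since those correspond to the excluded argument $x=1$. The repair is to change the induction statement: first prove, for \emph{all} $y\geq 1$, the relations $c(6y)=c(2y)$, $c(8y)=c(3y)$, $c(12y)=c(4y)$, $c(16y)=c(2y)$ (each follows from two integer solutions, e.g.\ $(6y,3y,3y)$ and $(4y,6y,4y)$ give the first), and then prove by induction on odd $p\geq 3$ that $c(px)=c(3x)$ for all positive integers $x$ \emph{including} $x=1$; the inductive step uses exactly your two solutions together with the hypothesis at the arguments $4x$ and $2x$, which are $\geq 2$ even when $x=1$. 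This pins $c(n)$ to $v_2(n)\bmod 3$ for every $n\geq 2$ (odd $n$ via $c(n)=c(n\cdot 1)=c(3)$), after which your treatment of $c(1)$ finishes the proof.
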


\subsection{The minimal coloring for $E(2,4)$}

In this subsection we prove that the only minimal coloring of the
nonzero rational numbers, up to isomorphism, for $E(2,4)$ is
$c_{2,4}$. The proof uses the following lemma from \cite{FoRa}, proven
below.

\begin{lemma} \label{lemma:muliple4}
For every $4$-coloring $c$ of the nonzero rational numbers without a
monochromatic solution to $E(2,4)$ and for nonzero rational number $x$
and integers $m$ and $n$, we have $c(x)=c(2^m3^nx)$ if and only if $m$
is a multiple of $4$.
\end{lemma}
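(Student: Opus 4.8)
The plan is to mirror the proof of Lemma~\ref{lemma:fr}, recasting the statement as a rigidity result for a coloring of a lattice. Fixing a nonzero rational $x$, I would associate to each ratio $2^{m}3^{n}$ the lattice point $(m,n)\in\mathbb{Z}^{2}$ and set $\chi(m,n)=c(2^{m}3^{n}x)$, so that the claim becomes: $\chi(m,n)=\chi(0,0)$ if and only if $4\mid m$. Since $c$ uses only four colors, it suffices to prove two facts and then combine them: first, that $c(x),c(2x),c(4x),c(8x)$ are four \emph{distinct} colors with $c(16x)=c(x)$ (the ``$2$-direction'' cycles with period exactly $4$), and second, that $c(3x)=c(x)$ (the ``$3$-direction'' collapses). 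Granting these, a short induction on $\lvert m\rvert$ and $\lvert n\rvert$ propagates the pattern to every lattice point, exactly as the Cayley-graph colouring $\chi(m,n)\equiv m+n\pmod 3$ was propagated in Lemma~\ref{lemma:fr}.

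The easy inequalities come from solutions of $E(2,4)$, that is $x_0+2x_1+4x_2=8x_3$, in which three of the four variables are set equal. Taking $(x_0,x_1,x_2,x_3)=(2x,x,x,x)$ forces $c(2x)\ne c(x)$, and taking $(x,x,\tfrac32 x,x)$ forces $c(\tfrac32 x)\ne c(x)$; substituting $x\mapsto 2^{k}3^{\ell}x$ then gives $c(2^{k+1}3^{\ell}x)\ne c(2^{k}3^{\ell}x)$ and $c(2^{k}3^{\ell+1}x)\ne c(2^{k+1}3^{\ell}x)$ for all $k,\ell$. In particular consecutive powers of $2$ receive different colors, so $c(2^{i}x)\ne c(2^{i+1}x)$. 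These ``single-off'' relations, however, are not by themselves enough to pin down $\chi$: the differences $(1,0)$ and $(-1,1)$ generate a bipartite Cayley graph, which admits many proper $4$-colorings (for instance $v_{2}+2v_{3}\bmod 4$).

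The main obstacle is therefore to produce the remaining inequalities $c(4x)\ne c(x)$, $c(8x)\ne c(x)$, and $c(3x)\ne c(4x)$, $c(3x)\ne c(8x)$, which cannot arise from a single solution with only inequalities available. Here I would imitate the bootstrapping used in Proposition~\ref{proposition:E23}: exhibit solutions of $E(2,4)$ in which two variables are forced to agree in color (via the already-established relations, or via auxiliary single-off relations such as $c(\tfrac54 x)\ne c(x)$ and $c(\tfrac78 x)\ne c(x)$ coming from the remaining three-equal solutions), so that a hypothetical coincidence $c(4x)=c(x)$ or $c(8x)=c(x)$ yields a genuinely monochromatic solution. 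The pigeonhole on the four available colors is essential: once $c(x),c(2x),c(4x),c(8x)$ are shown pairwise distinct they exhaust all colors, so $c(16x)=c(x)$ follows by elimination, and $c(3x)$, being different from each of $c(2x),c(4x),c(8x)$, must equal $c(x)$. I expect this step --- finding the right two-variable-off witnesses and organizing them into a clean induction --- to be the technical heart of the argument.

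Finally, with $c(x),c(2x),c(4x),c(8x)$ rainbow, $c(16x)=c(x)$, and $c(3x)=c(x)$ in hand, the two directions combine: repeatedly applying $c(3y)=c(y)$ reduces $c(2^{m}3^{n}x)$ to $c(2^{m}x)$, and the period-$4$ behaviour of the $2$-direction gives $c(2^{m}x)=c(x)$ precisely when $4\mid m$. This yields the stated equivalence $c(x)=c(2^{m}3^{n}x)\iff 4\mid m$, completing the proof.
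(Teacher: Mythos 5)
Your reduction is sound and agrees with the paper's: it suffices to show, for every nonzero rational $q$, that $c(q)=c(3q)=c(16q)$ and that $c(q),c(2q),c(4q),c(8q)$ are pairwise distinct, and the closing pigeonhole is exactly how the paper finishes. But your argument stops precisely where the actual work lies. The inequalities $c(x)\ne c(4x)$ and $c(x)\ne c(8x)$, and the equality $c(3x)=c(x)$, are never established; you only assert that one should ``exhibit solutions of $E(2,4)$ in which two variables are forced to agree in color'' and you explicitly defer this as ``the technical heart of the argument.'' That deferred step \emph{is} the proof of Lemma~\ref{lemma:muliple4}. In the paper it is carried out by explicit chains of forced colorings: assuming $c(x)=c(4x)$, the four colors are $c(x)=c(4x)$, $c(2x)$, $c(3x)$, $c(\frac{3}{2}x)$; one then forces $c(\frac{9}{4}x)=c(2x)$ and $c(6x)=c(\frac{3}{2}x)$, and concludes that $\frac{18}{7}x$ cannot receive any color, using the solutions $(x,4x,\frac{9}{4}x,\frac{9}{4}x)$, $(6x,2x,2x,\frac{9}{4}x)$, $(\frac{18}{7}x,x,4x,\frac{18}{7}x)$, and $(\frac{18}{7}x,6x,\frac{3}{2}x,\frac{18}{7}x)$. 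A similar chain assuming $c(x)\ne c(3x)$ makes $\frac{9}{4}x$ uncolorable, and $c(x)\ne c(8x)$ then follows from the solution $(8x,\frac{8}{3}x,\frac{8}{3}x,3x)$ combined with the already-proved $c(3y)=c(y)$. Without such explicit witnesses (or equivalents), nothing beyond the trivial forbidden-ratio inequalities has been shown, so the lemma remains unproved.

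Two further problems. First, your catalogue of forbidden ratios is too small because you only consider solutions with three equal entries; those give only the ratios $2$, $\frac{3}{2}$, $\frac{5}{4}$, $\frac{8}{7}$. Solutions with two pairs of equal entries, such as $(\frac{4}{3}y,\frac{4}{3}y,y,y)$, $(\frac{6}{5}y,y,\frac{6}{5}y,y)$, and $(y,\frac{7}{6}y,\frac{7}{6}y,y)$, supply the additional forbidden ratios $\frac{4}{3}$, $\frac{6}{5}$, $\frac{7}{6}$; the paper's phrase ``all solutions with exactly two distinct variables'' yields all of $\frac{n+1}{n}$ for $1\le n\le 7$, and its chains use $\frac{4}{3}$, $\frac{6}{5}$, and $\frac{7}{6}$ repeatedly. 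In particular $c(3x)\ne c(4x)$, which you list among the inequalities that ``cannot arise from a single solution,'' follows at once from the forbidden ratio $\frac{4}{3}$. (Also, $(x,x,\frac{3}{2}x,x)$ is not a solution of $E(2,4)$; the correct tuple is $(x,\frac{3}{2}x,x,x)$.) Second, your proposed order of deductions risks circularity: you want $c(3x)=c(x)$ to fall out of pigeonhole after $c(x)\ne c(8x)$ and $c(3x)\ne c(8x)$ are known, but the available proofs of those $8x$-inequalities already use $c(3y)=c(y)$. The paper avoids this by proving $c(x)=c(3x)$ directly by contradiction (the $\frac{9}{4}x$ argument) before anything involving $8x$, and it invokes pigeonhole only at the very last step, to get $c(16x)=c(x)$.
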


In particular, Lemma~\ref{lemma:muliple4} implies that $E(2,4)$ is
$3$-regular since in every $4$-coloring of the nonzero rational numbers
without a monochromatic solution to $E(2,4)$, the numbers $1$, $2$, $4$,
and $8$ are different colors. It follows that $c_{2,4}$ is minimal for
$E(2,4)$.  Moreover, we have the following proposition.

\begin{proposition} \label{proposition:E24}
The coloring $c_{2,4}$ is the only minimal coloring of the nonzero
rational numbers without a monochromatic solution to $E(2,4)$.
\end{proposition}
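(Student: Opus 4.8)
The plan mirrors the structure of the $E(2,3)$ proof but now has to account for the extra color. By Proposition~\ref{proposition:E2n} and Lemma~\ref{lemma:muliple4}, the coloring $c_{2,4}$ is already known to be minimal, so it suffices to show that any $4$-coloring $c$ without a monochromatic solution to $E(2,4)$ is isomorphic to $c_{2,4}$. From Lemma~\ref{lemma:muliple4} I know that $c(2^m 3^n x)=c(x)$ exactly when $4\mid m$; in particular the four values $c(x),c(2x),c(4x),c(8x)$ are all distinct, so they exhaust the four colors. The entire task is therefore to show that multiplying $x$ by any positive odd integer, and by $-1$, preserves the color, since then $c(x)$ depends only on $v_2(x)\bmod 4$, which is exactly $c_{2,4}$ up to renaming colors.

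First I would handle the odd positive integers by strong induction, exactly as in Proposition~\ref{proposition:E23}. The base cases $p=1,3$ are given by Lemma~\ref{lemma:muliple4}. For odd $p>3$, assuming $c(p'x)=c(x)$ for all smaller positive odd $p'$, I would exhibit, for each of the colors $c(2x),c(4x),c(8x)$, an explicit solution of $E(2,4)$, namely a tuple $(x_0,x_1,x_2,x_3)$ with $x_0+2x_1+4x_2=8x_3$, in which $x_3=px$, one other coordinate is a small power-of-two multiple of $x$ realizing that color, and the remaining coordinates are (odd)$\cdot$(power of two) multiples of $x$ whose colors are forced by the inductive hypothesis to match. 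Ruling out each of the three possibilities $c(px)=c(2x)$, $c(px)=c(4x)$, and $c(px)=c(8x)$ forces $c(px)=c(x)$, since the four colors are already used by $x,2x,4x,8x$. The construction of these three witnessing solutions is the step I expect to be the genuine obstacle: unlike the three-variable case where each elimination needed only a single neat identity, here the three coefficients $1,2,4$ give more freedom but also more constraints, and finding solutions whose non-$px$ coordinates all have controlled $2$-adic valuations and odd parts strictly below $p$ (so the inductive hypothesis applies) requires care. I anticipate choosing the small coordinates as $2^a x$ and solving the resulting linear Diophantine condition for the remaining odd multiple, checking in each case that this multiple is a positive odd integer smaller than $p$.

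Having shown that $c$ is invariant under multiplication by every positive odd integer, invariance under arbitrary positive rationals with $4\mid v_2$ follows, so $c(x)=c(rx)$ for positive $r$ iff $v_2(r)\equiv 0\pmod 4$. It then remains to prove $c(-x)=c(x)$. As in the $E(2,3)$ argument, I would produce solutions of $E(2,4)$ that relate the colors of $-x,-2x,-4x,-8x$ to those of the positive multiples: one solution forces $c(-x)\ne c(2x)$, a second forces $c(-x)\ne c(4x)$, and a third forces $c(-x)\ne c(8x)$, using negative coordinates whose colors are already pinned down by the positive-rational invariance already established. Since $x,2x,4x,8x$ use all four colors, eliminating three of them forces $c(-x)=c(x)$. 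Combining everything, $c(x)$ depends only on $v_2(x)\bmod 4$, so $c$ is isomorphic to $c_{2,4}$, completing the proof.
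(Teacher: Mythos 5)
Your outline reproduces the paper's strategy exactly: use Lemma~\ref{lemma:muliple4} to see that $c(x),c(2x),c(4x),c(8x)$ exhaust the four colors, prove $c(px)=c(x)$ for every positive odd $p$ by strong induction, ruling out each of $c(2x),c(4x),c(8x)$ via an explicit solution of $E(2,4)$ that would become monochromatic, and then handle $-1$ the same way (the paper's witnesses there are $(10x,-x,2x,2x)$, $(28x,-16x,-x,-x)$, $(88x,-16x,-16x,-x)$). But the step you yourself defer as ``the genuine obstacle'' is the entire content of the proposition, and the recipe you propose for it provably fails. Suppose $c(px)=c(4x)$ and, following your template, you seek a solution $x_0+2x_1+4x_2=8px$ in which each $x_i$ is an (odd integer $q_i<p$)$\cdot$(power of two) multiple of $x$ with $v_2(x_i)\equiv 2 \pmod 4$, so that the color $c(4x)$ of $x_0,x_1,x_2$ is forced by Lemma~\ref{lemma:muliple4} and the inductive hypothesis. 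The valuations $v_2(x_0),v_2(2x_1),v_2(4x_2)$ are $\equiv 2,3,0 \pmod 4$, hence pairwise distinct; by Lemma~\ref{lemma:valuation} the two smallest of the four valuations must coincide, so one left-hand valuation equals $v_2(8px)=3$ and the other two exceed $3$. Only $2x_1$ has residue $3$, forcing $v_2(x_1)=2$, $v_2(x_0)\geq 6$, $v_2(x_2)\geq 2$. Dividing by $8$ gives $p=2^{v_2(x_0)-3}q_0+q_1+2^{v_2(x_2)-1}q_2\geq 8+1+2=11$. The same count gives $p\geq 13$ for ruling out $c(2x)$ and $p\geq 7$ for ruling out $c(8x)$. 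So with witnesses of your prescribed shape the induction cannot even start: at $p=5$ all three eliminations are impossible, and $p=7$, $9$, $11$ also break.

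This is exactly why the paper organizes the proof differently. It inserts $p=5$ as a separate base case, proved with witnesses that violate your template --- $(18x,5x,5x,6x)$, $(12x,4x,5x,5x)$, $(5x,\frac{3}{2}x,8x,5x)$, in which $px$ appears twice or sits off the right-hand side --- and only then runs the induction for odd $p>5$, with the fractional witnesses $(\frac{32}{3}x,\frac{32}{3}x,2(p-4)x,px)$, $(\frac{64}{5}x,4(p-2)x,\frac{4}{5}x,px)$, and $(8(p-2)x,\frac{8}{3}x,\frac{8}{3}x,px)$. Denominators of $3$ come for free from Lemma~\ref{lemma:muliple4}, but the denominators of $5$ in the middle witness are justified only by the already-established $p=5$ case applied at rational points; the special case is load-bearing, not bookkeeping. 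Your plan is salvageable --- the needed witnesses do exist --- but only after relaxing ``positive odd integer smaller than $p$'' to allow such fractional odd parts and after treating $p=5$ separately; neither adjustment is visible in your sketch, and they are where the actual work lies.
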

\begin{proof}[Proof of Lemma~\ref{lemma:muliple4}.]
By induction on $m$ and $n$, it suffices to prove that for all nonzero
rational numbers $q$, we have $c(q)=c(3q)=c(16q)$ and $c(q) \notin
\{c(2q),c(4q),c(8q)\}$.  By considering all solutions to $E(2,4)$ with
exactly two distinct variables, for $r \in \{\frac{n+1}{n} : n \in
\mathbb{Z}~\textrm{and}~ 1 \leq n \leq 7\}$, we have that $q$ and $rq$
are different colors (call these ratios $r$ \emph{forbidden}). Note
immediately that $c(x) \ne c(2x)$ by the forbidden ratio $2$.

We begin by showing that $c(x) \ne c(4x)$.  Proceeding by contradiction,
suppose $c(x) = c(4x)$ instead for some $x$.  By forbidden ratios
amongst themselves, we have that $c(x) = c(4x)$, $c(2x)$, $c(3x)$, and
$c(\frac{3}{2}x)$ are distinct colors.  Then $c(\frac{9}{4}x) = c(2x)$
by forbidden ratios from $3x$ and $\frac{3}{2}x$ and because of the
solution $(x,4x,\frac{9}{4}x,\frac{9}{4}x)$.  Similarly, $c(6x) =
c(\frac{3}{2}x)$ because of forbidden ratios from $4x$ and $3x$ and
because of the solution $(6x,2x,2x,\frac{9}{4}x)$.  Finally,
$\frac{18}{7}x$ cannot be colored with any colors because of forbidden
ratios from $\frac{9}{4}x$ and $3x$ as well as the solutions
$(\frac{18}{7}x, x, 4x, \frac{18}{7}x)$ and $(\frac{18}{7}x, 6x,
\frac{3}{2}x, \frac{18}{7}x)$.

We now show that $c(x) = c(3x)$.  Assume otherwise, so that $c(x) \ne
c(3x)$ for some $x$.  Once again, by forbidden ratios amongst
themselves, we have that $c(x)$, $c(3x)$, $c(2x)$, and $c(4x)$ are
distinct; furthermore, $c(\frac{3}{2}x) = c(4x)$ also by forbidden
ratios.  We have $c(\frac{4}{3}x) = c(3x)$ by forbidden ratios from $x$
and $2x$ as well as the solution $(4x, \frac{4}{3}x, \frac{4}{3}x,
\frac{3}{2}x)$.  Next, $c(\frac{5}{3}x) = c(x)$ by forbidden ratios from
$\frac{4}{3}x$ and $2x$ as well as the solution $(4x, \frac{5}{3}x,
\frac{3}{2}x, \frac{5}{3}x)$.  Similarly, $c(6x) = c(2x)$ by forbidden
ratios from $3x$ and $\frac{3}{2}x$ and the solution $(6x, \frac{5}{3}x,
x, \frac{5}{3}x)$.  Finally, $\frac{9}{4}x$ cannot be colored with any
colors, because of forbidden ratios from $3x$ and $\frac{3}{2}x$ as well
as the solutions $(x, \frac{5}{3}x, \frac{9}{4}x, \frac{5}{3}x)$ and
$(6x, 2x, 2x, \frac{9}{4}x)$.

Clearly, $c(x) \ne c(8x)$ since otherwise $(8x, \frac{8}{3}x,
\frac{8}{3}x, 3x)$ would be a monochromatic solution.  Completing the
proof, both $c(x)$ and $c(16x)$ must be different from all of $c(2x)$,
$c(4x)$, and $c(8x)$ (which are distinct), so $c(x) = c(16x)$.
\end{proof}

\begin{proof}[Proof of Proposition~\ref{proposition:E24}.]
The proof is similar to the proof of
Proposition~\ref{proposition:E23}. Suppose $c$ is a $4$-coloring of the
nonzero rational numbers without a monochromatic solution to
$E(2,4)$. By Lemma~\ref{lemma:muliple4}, for every nonzero rational
number $x$ and integers $m$ and $n$, we have $c(x)=c(2^m3^nx)$ if and
only if $m$ is a multiple of $4$. We now show that for every positive
rational number $r$ with $v_2(r) \equiv 0 \pmod{4}$, we have
$c(x)=c(rx)$. By induction, it suffices to prove that $c(x)=c(px)$ for
every positive odd integer $p$ and nonzero rational number $x$. For
$p=1$ or $3$, Lemma~\ref{lemma:muliple4} implies that $c(x)=c(px)$ for
every nonzero rational number $x$.

For $p=5$, we have $(18x,5x,5x,6x)$ is a solution to $E(2,4)$ and
$c(18x)=c(2x)=c(6x)$, so $5x$ and $2x$ are different colors.  Also,
$(12x,4x,5x,5x)$ is a solution to $E(2,4)$ and $c(12x)=c(4x)$, so $5x$
and $4x$ are different colors.  Finally, $(5x,\frac{3}{2}x,8x,5x)$ is a
solution to $E(2,4)$ and $c(\frac{3}{2}x)=c(8x)$, so $c(5x)$ and $c(8x)$
are different colors. Since $x$, $2x$, $4x$, and $8x$ are all different
colors, then $c(5x)$ must be the color of $c(x)$.

The rest of the proof is by induction on $p$.  Suppose $p$ is an odd
integer greater than $5$ and that for all positive odd $p' < p$ and
rational $x$, $c(p'x) = c(x)$.  Then for all $x$, we know that $px$ and
$2x$ are different colors since $(\frac{32}{3}x, \frac{32}{3}x,
2(p-4)x,px)$ would otherwise be a monochromatic solution. Similarly,
$px$ and $4x$ are different colors because of $(\frac{64}{5}x, 4(p-2)x,
\frac{4}{5}x, px)$.  Finally, $px$ and $8x$ are different colors because
of $(8(p-2)x, \frac{8}{3}x, \frac{8}{3}x, px)$. Since $c(x)$, $c(2x)$,
$c(4x)$, and $c(8x)$ are all distinct, it follows that $c(px) = c(x)$ as
desired.

To finish the proof, we need to show that $c(x)=c(-x)$ for every nonzero
rational number $x$. Since $(10x,-x,2x,2x)$ is a solution to $E(2,4)$
and $c(10x)=c(2x)$, then $c(-x)$ and $c(2x)$ are different colors. Since
$(28x,-16x,-x,-x)$ is a solution to $E(2,4)$, $c(-16x)=c(-x)$, and
$c(28x)=c(4x)$, then $-x$ and $4x$ are different colors. Since
$(88x,-16x,-16x,-x)$ is a solution to $E(2,4)$, $c(-16x)=c(-x)$, and
$c(88x)=c(8x)$, then $-x$ and $8x$ are different colors. Since $x$,
$2x$, $4x$, and $8x$ are all different colors, then
$c(-x)=c(x)$. Therefore, $c_{2,4}$ is the only minimal coloring, up to
isomorphism, of the nonzero rational numbers without a monochromatic
solution to $E(2,4)$.
\end{proof}

\subsection{The minimal colorings for $E(\frac{3}{2},3)$}

In this subsection we prove that the two minimal colorings of the
nonzero rational numbers for $E(\frac{3}{2},3)$ are $c_{2,3}$ and
$c_{3,3}$. By a proof similar to Proposition~\ref{proposition:E2n}, it
is clear that $c_{2,3}$ and $c_{3,3}$ are free of monochromatic
solutions to $E(\frac{3}{2},3)$. The equation $E(\frac{3}{2},3)$ is
$2$-regular since in any coloring of the nonzero rational numbers
without a monochromatic solution to $E(\frac{3}{2},3)$, the numbers $9$,
$10$, and $12$ are all different colors (by Lemma~\ref{lemma:fr}). Hence
$c_{2,3}$ and $c_{3,3}$ are minimal colorings for
$E(\frac{3}{2},3)$. Again from Lemma~\ref{lemma:fr}, it follows that for
every nonzero rational number $x$ and integers $m$ and $n$, we have
$c(x)=c((\frac{6}{5})^m(\frac{10}{9})^nx)$ if and only if $m+n$ is a
multiple of $3$. In particular, $c(x)=c(rx)$ for $r \in \{\frac{8}{5},
\frac{64}{27}, \frac{40}{27}\}$ and $c(x) \ne c(rx)$ for $r \in
\{\frac{6}{5}, \frac{10}{9},\frac{4}{3}\}$.

For the rest of this subsection, we suppose that $c$ is a $3$-coloring
that is free of monochromatic solutions to $E(\frac{3}{2},3)$ and we
will deduce that $c$ is either $c_{2,3}$ or $c_{3,3}$. We first build up
structural properties about $c$ if there is a nonzero rational number
$x$ such that $c(x)=c(2x)$ and deduce that $c$ is the coloring
$c_{3,3}$. We then prove properties about $c$ if $c(x) \ne c(2x)$ for
every nonzero rational number $x$.  We deduce in this case that $c$ is
the coloring $c_{2,3}$.

\begin{lemma} \label{lemma:2powers}
If $x$ is a nonzero rational number such that $c(2x)=c(x)$, then
$c(2^nx)=c(x)$ for every nonnegative integer $n$.
\end{lemma}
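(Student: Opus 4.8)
The plan is to argue by induction on $n$, the cases $n=0$ and $n=1$ being trivial and the hypothesis respectively. The heart of the matter is the inductive step: assuming $c(2^{k}x)=c(x)$ for all $0\le k\le \ell$, I want $c(2^{\ell+1}x)=c(x)$. Since only three colors are in play, it suffices to rule out the two colors other than $c(x)$ for the element $2^{\ell+1}x$. For each such color I would exhibit a solution of $E(\tfrac32,3)$, that is of $4x_0+6x_1=9x_2$, two of whose entries already carry that color while the third entry is $2^{\ell+1}x$; the absence of monochromatic solutions then forbids that color for $2^{\ell+1}x$, and by elimination $c(2^{\ell+1}x)=c(x)$. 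Note that no solution of $E(\tfrac32,3)$ has all its variables in $\{x,2x,4x\}$, so auxiliary elements are unavoidable.

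To supply the same-colored reference pairs I would lean on Lemma~\ref{lemma:fr}. With $a$ and $b$ as in that lemma, it pins the color of $rx$ throughout the subgroup $H=\langle a,b\rangle$ (the nonzero rationals supported on $\{2,3,5\}$ whose exponents satisfy the single linear relation underlying Lemma~\ref{lemma:fr}); within this coset the color is a bijective function of $v_2(r)\bmod 3$, so all three colors, and in particular matched pairs of each wrong color, are available there. The hypothesis must then be propagated. A first clean observation is that $c(2x)=c(x)$ forces $c(2\cdot hx)=c(hx)$ for every $h\in H$ with $v_2(h)\equiv 0\pmod 3$, since for such $h$ both $hx$ and $h\cdot 2x$ are obtained from their bases by an \emph{allowed} ratio and so $c(2hx)=c(h\cdot 2x)=c(2x)=c(x)=c(hx)$; this already yields extra color-$c(x)$ elements such as $\tfrac{16}{5}x$. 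I expect I may further need to strengthen the inductive hypothesis to record not merely $c(2^{k}x)=c(x)$ but the full effect of multiplication by $2$ on the coset $Hx$ up to level $\ell$, so that wrong-color reference elements are available in the intermediate cosets and not only in $Hx$ itself.

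The main obstacle I anticipate is the explicit solution search. Because the coefficients $4,6,9$ are supported on $\{2,3\}$, a solution combining the pure power $2^{\ell+1}x$ with entries drawn from $Hx$ tends either to push the remaining entry into a coset of higher level or to introduce a prime outside $\{2,3,5\}$, at which point Lemma~\ref{lemma:fr} no longer controls its color; keeping every auxiliary entry supported on $\{2,3,5\}$ with the prescribed residue of $v_2$ modulo $3$, uniformly in $\ell$, is the delicate bookkeeping. A further subtlety is that $c(-x)=c(x)$ is not yet available at this stage, so each solution used must be arranged with positive entries (or the sign carried as an extra unknown). Concretely I would first establish the base implication $c(2x)=c(x)\Rightarrow c(4x)=c(x)$ by hand, extract from it the shape of the solutions employed, and then check that the same family of solutions, rescaled by the appropriate power of $2$, drives the induction for every $\ell$.
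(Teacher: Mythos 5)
Your overall skeleton does match the paper's: its proof opens by observing that, by induction on $n$, the lemma reduces to the single implication $c(x)=c(2x)\Rightarrow c(4x)=c(x)$, and your closing paragraph arrives at the same reduction. Note, though, that your anticipated need to strengthen the inductive hypothesis (tracking the coloring of a whole coset $Hx$ ``uniformly in $\ell$'') is a red herring: since the one-step implication holds for \emph{every} nonzero rational, the inductive step is just the implication applied to $y=2^{n-1}x$, whose hypothesis $c(y)=c(2y)$ is inherited automatically from the previous two levels. No coset bookkeeping and no rescaled families of solutions need to be checked.

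The genuine gap is that the one-step implication is never proved, and it is the entire content of the lemma. You write that you ``would first establish the base implication by hand,'' but establishing it is exactly the nontrivial part. The paper does it by assuming $c(x)=c(2x)\ne c(4x)$ and running a chain of forced colorings through seven auxiliary elements: $c(3x)$ is the third color (forbidden ratio $\frac{4}{3}$ against $4x$, plus the solution $(3x,x,2x)$ of $4x_0+6x_1=9x_2$); then $c(\frac{10}{3}x)=c(x)$, $c(\frac{5}{2}x)=c(4x)$, $c(\frac{13}{3}x)=c(3x)$, $c(\frac{9}{2}x)=c(4x)$, $c(6x)=c(3x)$; and finally $\frac{3}{4}x$ admits no color at all, using among others the solutions $(\frac{9}{2}x,\frac{3}{4}x,\frac{5}{2}x)$ and $(\frac{3}{4}x,6x,\frac{13}{3}x)$. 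Your proposal correctly identifies the obstacles to finding such a chain --- auxiliary entries escaping the subgroup controlled by Lemma~\ref{lemma:fr}, the unavailability of $c(-x)=c(x)$ at this stage --- but resolves none of them, and exhibits no concrete solutions beyond the observation that $\{x,2x,4x\}$ alone carries none. As it stands, the proposal is a correct reduction plus a statement of intent; the argument that makes the lemma true is missing.
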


\begin{lemma} \label{lemma:3multiples}
If $x$ is a nonzero rational number such that $c(2x)=c(x)$, then for all
nonnegative integers $k$, $m$, and $n$ we have $c(2^k3^m5^nx)=c(x)$ if
and only if $m$ is a multiple of $3$.
\end{lemma}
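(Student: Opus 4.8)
The plan is to build on Lemma~\ref{lemma:2powers}, which already gives $c(2^kx)=c(x)$ for every $k\geq 0$, together with the global consequences of Lemma~\ref{lemma:fr} recorded just above: for \emph{every} nonzero rational $y$ one has $c(y)=c(\frac{8}{5}y)=c(\frac{64}{27}y)$, while $c(y)\neq c(\frac{4}{3}y)$ and $c(y)\neq c(\frac{10}{9}y)$. The guiding picture is that, working with the exponents of the primes $2,3,5$, Lemma~\ref{lemma:fr} only pins down $c$ along a rank-two sublattice of $\mathbb{Z}^3$ (the orbit generated by $\frac{6}{5}$ and $\frac{10}{9}$), and on that orbit a short computation shows the color depends only on the exponent of $3$ modulo $3$. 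The hypothesis $c(2x)=c(x)$ supplies exactly the missing third direction $(1,0,0)$; since this vector together with the orbit lattice spans all of $\mathbb{Z}^3$, I expect the color of $2^k3^m5^nx$ to be governed entirely by $m \bmod 3$, which is the assertion.

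For the ``if'' direction I would first absorb the powers of $5$. Using $\frac{8}{5}=2^3 5^{-1}$, the relation $c(y)=c(\frac{8}{5}y)$ applied to $y=2^k5^{n+1}x$ gives $c(2^k5^{n+1}x)=c(2^{k+3}5^nx)$, so induction on $n$ from the base case of Lemma~\ref{lemma:2powers} yields $c(2^k5^nx)=c(x)$ for all $k,n\geq 0$. Next I absorb the powers of $3$ divisible by $3$: using $\frac{64}{27}=2^6 3^{-3}$, the relation $c(y)=c(\frac{64}{27}y)$ applied to $y=2^k3^{3(j+1)}5^nx$ gives $c(2^k3^{3(j+1)}5^nx)=c(2^{k+6}3^{3j}5^nx)$, and induction on $j$ establishes $c(2^k3^{3j}5^nx)=c(x)$ for all $k,j,n\geq 0$. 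This identifies an entire multiplicative semigroup $O_0=\{2^k3^{3j}5^nx\}$ on which $c$ is constantly $c(x)$, and proves the ``if'' direction.

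For the ``only if'' direction I would invoke the two forbidden ratios. Fix $y=2^k3^{3j}5^nx\in O_0$, so $c(y)=c(x)$. Applying $c(z)\neq c(\frac{4}{3}z)$ with $z=3y$ gives $c(3y)\neq c(2^2y)$, and applying $c(z)\neq c(\frac{10}{9}z)$ with $z=3^2y$ (using $\frac{10}{9}=2\cdot 3^{-2}\cdot 5$) gives $c(3^2y)\neq c(10y)$. Since $O_0$ is closed under multiplication by $2$ and by $10=2\cdot 5$, both $2^2y$ and $10y$ again lie in $O_0$ and hence have color $c(x)$; therefore $c(3y)\neq c(x)$ and $c(3^2y)\neq c(x)$. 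As $3y=2^k3^{3j+1}5^nx$ and $3^2y=2^k3^{3j+2}5^nx$, this settles the cases $m\equiv 1$ and $m\equiv 2\pmod 3$, completing the equivalence.

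The main obstacle is precisely the last maneuver: Lemma~\ref{lemma:2powers} asserts only that doubling preserves color at the single number $x$, whereas the forbidden-ratio relations have to be applied at the shifted base points $3y$ and $3^2y$, where the right-hand sides involve multiplying by $2^2$ and by $10$. The resolution is to push the ``if'' direction through \emph{first}, producing the whole semigroup $O_0$ on which $c\equiv c(x)$; because $O_0$ is stable under the very operations ($\cdot\,2$ and $\cdot\,10$) that appear on those right-hand sides, the needed instances of ``doubling (or multiplying by $10$) preserves color'' are automatic, and no separate strengthening of Lemma~\ref{lemma:2powers} beyond its single-point statement is required.
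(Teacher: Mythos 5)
Your proof is correct and follows essentially the same route as the paper's: both use Lemma~\ref{lemma:2powers} plus the globally valid ratio $\frac{8}{5}$ to absorb powers of $2$ and $5$, the ratio $\frac{64}{27}$ to handle exponents of $3$ divisible by $3$, and forbidden ratios to exclude the residues $1$ and $2$. The only cosmetic difference is that for $m\equiv 2\pmod 3$ you compare $9y$ with $10y$ via the forbidden ratio $\frac{10}{9}$, while the paper compares $9x$ with $16x$ via the ratio $\frac{9}{16}$; your more explicit bookkeeping of the induction (the semigroup $O_0$) just spells out what the paper compresses into ``by induction, it suffices.''
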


\begin{lemma} \label{lemma:36}
If $x$ is a nonzero rational number such that $c(2x)=c(x)$, then
$c(3x)=c(6x)$.
\end{lemma}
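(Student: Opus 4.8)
My plan is to argue by contradiction, building a short chain of forced colours that terminates in an explicit monochromatic solution. By Lemma~\ref{lemma:3multiples} (with $m=1$) both $3x$ and $6x$ have colour different from $c(x)$, so if $c(3x)\neq c(6x)$ then $c(x),c(3x),c(6x)$ are all distinct; normalising, assume for contradiction that $c(x)=0$, $c(3x)=1$, and $c(6x)=2$. Throughout I will use freely that $c(2^{k}x)=0$ for all $k\ge 0$ (Lemma~\ref{lemma:2powers}), that the ratios $\frac{4}{3},\frac{6}{5},\frac{10}{9}$ and their reciprocals are forbidden (as established earlier in this subsection), and that within a single coset of the subgroup generated by $\frac{6}{5}$ and $\frac{10}{9}$ the colouring is pinned down by Lemma~\ref{lemma:fr}, the powers $2^{k}x$ serving as the colour-$0$ anchors.

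The heart of the argument is to determine the colours of a handful of auxiliary elements. First $c(\frac{9}{2}x)=1$: it differs from $c(6x)=2$ by the forbidden ratio $\frac{9/2}{6}=\frac{3}{4}$, and it differs from $c(8x)=0$ because $\frac{9}{2}x$ and $8x$ lie in a common coset of $\langle\frac{6}{5},\frac{10}{9}\rangle$ but are not related by a colour-preserving power, so Lemma~\ref{lemma:fr} forces distinct colours. Feeding this into the solution $(\frac{9}{2}x,\frac{3}{2}x,3x)$ of $E(\frac{3}{2},3)$, whose first and third entries now both have colour $1$, forces $c(\frac{3}{2}x)\neq 1$; since the forbidden ratio $\frac{3}{4}$ also gives $c(\frac{3}{2}x)\neq c(2x)=0$, we obtain $c(\frac{3}{2}x)=2$, and then $c(\frac{9}{8}x)=1$ by the same two kinds of constraints (forbidden ratio $\frac{3}{4}$ against $\frac{3}{2}x$, and Lemma~\ref{lemma:fr} against $2x$). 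Repeating the manoeuvre one coset higher, the solution $(\frac{45}{8}x,3x,\frac{9}{2}x)$ forces $c(\frac{45}{8}x)=2$, whence $c(\frac{81}{16}x)=1$ (using the forbidden ratio $\frac{9}{10}$ between $\frac{81}{16}x$ and $\frac{45}{8}x$, together with Lemma~\ref{lemma:fr} against $16x$).

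The contradiction is then immediate: $(\frac{81}{16}x,\frac{9}{8}x,3x)$ is a solution of $E(\frac{3}{2},3)$, and all three of its entries now have colour $1$. The step I expect to be the genuine obstacle is discovering this chain at all. A direct search shows that no solution of $E(\frac{3}{2},3)$ has all three of its variables supported on just the two cosets containing $3x$ and $6x$, so the relative ``orientation'' of those two cosets is invisible to any single local solution; one is forced to propagate colour information outward through the neighbours $\frac{3}{2}x,\frac{9}{8}x,\frac{45}{8}x,\frac{81}{16}x$ until the loop closes. To keep the search finite and organised I would record, coset by coset, only the single binary choice of how each coset's two nonzero positions are coloured (its anchor being forced to $0$ by Lemma~\ref{lemma:2powers}), and hunt for the shortest walk of solutions returning to an already-determined coset with an incompatible choice; Lemma~\ref{lemma:valuation} at the prime $3$ is a useful guide to where monochromatic solutions can possibly occur.
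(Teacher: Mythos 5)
Your argument is correct, and it is not the paper's argument. Every step checks out: $(\frac{9}{2}x,\frac{3}{2}x,3x)$, $(\frac{45}{8}x,3x,\frac{9}{2}x)$, and $(\frac{81}{16}x,\frac{9}{8}x,3x)$ are genuine solutions of $E(\frac{3}{2},3)$, and the ratios you feed to Lemma~\ref{lemma:fr}, namely $\frac{9}{16}=(\frac{6}{5})^{-2}(\frac{10}{9})^{-2}$, $\frac{45}{128}=(\frac{6}{5})^{-4}(\frac{10}{9})^{-3}$, and $\frac{81}{256}=(\frac{6}{5})^{-4}(\frac{10}{9})^{-4}$, have exponent sums $-4$, $-7$, $-8$, none divisible by $3$, so they do force distinct colors as you claim. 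One sentence needs tightening: the solution $(\frac{45}{8}x,3x,\frac{9}{2}x)$ by itself only rules out color $1$ for $\frac{45}{8}x$; to conclude $c(\frac{45}{8}x)=2$ you must also invoke Lemma~\ref{lemma:fr} against the anchor $16x$ (ratio $\frac{45}{128}$, exponent sum $-7$). That is evidently what ``repeating the manoeuvre'' means, but as written the citation of Lemma~\ref{lemma:fr} against $16x$ is attached only to the $\frac{81}{16}x$ step.

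The paper's proof has the same overall shape --- assume $c(x)$, $c(3x)$, $c(6x)$ are all distinct and force colors until something breaks --- but it runs through a different chain ($\frac{32}{9}x$, $\frac{4}{3}x$, $\frac{3}{4}x$, $\frac{3}{2}x$) and closes differently: instead of exhibiting a monochromatic solution, it shows $c(\frac{3}{2}x)=c(\frac{3}{4}x)$, i.e., that the hypothesis $c(y)=c(2y)$ holds at the new base point $y=\frac{3}{4}x$, whence Lemma~\ref{lemma:2powers} (the paper cites Lemma~\ref{lemma:3multiples}) gives $c(3x)=c(4y)=c(8y)=c(6x)$, a contradiction. That re-basing trick is the paper's main economy, and it is a device worth knowing. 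On the other hand, the paper's chain as printed rests on the assertion that $\frac{32}{9}x$ and $6x$ get the same color ``since $\frac{64}{27}(6x)=\frac{32}{9}x$''; that identity is false ($\frac{64}{27}\cdot 6x=\frac{128}{9}x$, and what is true is $\frac{32}{9}x=\frac{64}{27}\cdot\frac{3}{2}x$, so Lemma~\ref{lemma:fr} ties $\frac{32}{9}x$ to $\frac{3}{2}x$, whose color is unknown at that stage of the paper's argument). So your chain, besides ending in a concrete monochromatic solution, avoids the one step of the published proof that does not hold up as written.
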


From Lemma~\ref{lemma:3multiples} and Lemma~\ref{lemma:36}, we have the
following result.

\begin{lemma}  \label{lemma:235}
If $c$ is a $3$-coloring of the nonzero rational numbers without a
monochromatic solution to $E(\frac{3}{2},3)$ and $x$ is a rational
number such that $c(2x)=c(x)$, then for all integers $m_1$, $n_1$,
$p_1$, $m_2$, $n_2$, $p_2$, we have
$c(2^{n_1}3^{m_1}5^{p_1}x)=c(2^{n_2}3^{m_2}5^{p_2}x)$ if and only if
$m_1 \equiv m_2 \pmod{3}$.
\end{lemma}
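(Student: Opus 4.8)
The plan is to regard the statement as a claim about the function $\chi\colon \mathbb{Z}^3 \to \{0,1,2\}$ given by $\chi(n,m,p)=c(2^n3^m5^px)$, and to show that $\chi(n,m,p)$ depends only on the residue $m \bmod 3$, and does so injectively. This reduces the lemma to three assertions, to be verified for all integers $n,m,p$: multiplication by $2$ leaves $c$ unchanged (so $\chi$ is independent of $n$), multiplication by $5$ leaves $c$ unchanged (so $\chi$ is independent of $p$), and multiplication by $3$ cycles $c$ through all three colors with exact period $3$ (so $\chi$ is periodic of period $3$ in $m$ and takes three distinct values). Granting these, the desired biconditional is immediate: three colors distributed with period $3$ and with consecutive terms distinct are forced to be injective on residues mod $3$.

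First I would establish all three assertions on the nonnegative cone, using only the hypothesis $c(2x)=c(x)$ together with Lemmas~\ref{lemma:3multiples} and~\ref{lemma:36}. The key preliminary step is to propagate the hypothesis up the powers of $3$: by induction on $j$, applying Lemma~\ref{lemma:36} to the point $3^jx$, one gets $c(2\cdot 3^jx)=c(3^jx)$ for every $j\ge 0$. With the hypothesis now available at each base point $3^jx$, Lemma~\ref{lemma:3multiples} applies there and yields, for nonnegative $k$ and $q$, that $c(2^k5^q\cdot 3^jx)=c(3^jx)$ (taking the lemma's exponent of $3$ to be $0$), which is exactly the $2$- and $5$-invisibility within the cone, and gives $c(2^n3^m5^px)=c(3^mx)$ for all nonnegative $n,m,p$. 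The same lemma with exponent of $3$ equal to $1$ gives $c(3^{j+1}x)\ne c(3^jx)$ for all $j\ge 0$; combined with $c(3^{j+3}x)=c(3^jx)$ this forces $c(x),c(3x),c(9x)$ to be three distinct colors and makes $c(3^jx)$ periodic of period exactly $3$. Thus all three assertions hold whenever the exponents are nonnegative.

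The main obstacle is extending the three assertions to \emph{negative} exponents. Here the two available tools fall short: Lemma~\ref{lemma:36} propagates ``$2$ is invisible'' only \emph{upward} along powers of $3$, and Lemma~\ref{lemma:3multiples} is stated only for nonnegative exponents, so neither supplies any point lying strictly below $x$ at which the hypothesis is known to hold. A naive rescaling reduction fails for the same reason: to compare two arbitrary points by multiplying both by a common factor $2^a3^{3b}5^c$ and invoking the cone result, one would need that factor to be invisible at those points, which again requires the hypothesis at a point below $x$. I would therefore handle the downward direction directly, in the explicit style of Propositions~\ref{proposition:E23} and~\ref{proposition:E24}: exhibit particular solutions to $E(\frac{3}{2},3)$ that, using the colors already pinned down in the cone as ``forbidden'' neighbors, force $c\bigl(\tfrac{1}{2}y\bigr)=c(y)$, $c\bigl(\tfrac{1}{5}y\bigr)=c(y)$, and the period-$3$ relation for $c\bigl(\tfrac{1}{3}y\bigr)$ at points $y$ in the cone, and then induct downward one exponent at a time. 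Producing these specific solutions, and checking that the three relevant color classes are genuinely exhausted so that the third color is forced, is the delicate part of the argument.

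Once the $2$-invisibility, $5$-invisibility, and exact period-$3$ cycling of multiplication by $3$ are available for all integer exponents, they combine as in the first paragraph: $\chi(n,m,p)$ is independent of $n$ and $p$ and periodic of period $3$ in $m$ with three distinct values, so $c(2^{n_1}3^{m_1}5^{p_1}x)=c(2^{n_2}3^{m_2}5^{p_2}x)$ holds precisely when $m_1\equiv m_2 \pmod{3}$, which is the assertion of the lemma.
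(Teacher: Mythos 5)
Your reduction to three assertions and your treatment of the nonnegative cone are correct and coincide with the paper's (propagate the hypothesis $c(2x)=c(x)$ up the powers of $3$ via Lemma~\ref{lemma:36}, then apply Lemma~\ref{lemma:3multiples} at each base point $3^jx$; the paper compresses this into one sentence). The genuine gap is your third paragraph: the downward direction is the entire content of the lemma beyond the cone case, and you do not prove it --- you only announce that one should ``exhibit particular solutions'' forcing $c(\tfrac{1}{2}y)=c(y)$, $c(\tfrac{1}{5}y)=c(y)$, and the period-$3$ relation for $c(\tfrac{1}{3}y)$, and you yourself flag producing them as ``the delicate part of the argument.'' A plan to find such solutions is not a proof. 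For comparison, the paper settles this step with two explicit solutions to $E(\frac{3}{2},3)$: from $(6x,\frac{x}{2},3x)$ and $c(3x)=c(6x)$ (Lemma~\ref{lemma:36}) one gets $c(\frac{x}{2})\ne c(3x)$; from $(\frac{x}{2},\frac{80}{3}x,18x)$ together with $c(18x)=c(9x)$ and $c(\frac{80}{3}x)=c(\frac{40}{27}\cdot 18x)=c(18x)$ one gets $c(\frac{x}{2})\ne c(9x)$; since $c(x)$, $c(3x)$, $c(9x)$ are the three distinct colors, $c(\frac{x}{2})=c(x)$, and the hypothesis then propagates down the powers of $2$ by induction.

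A second, related problem: your obstruction analysis (``the two available tools fall short,'' ``a naive rescaling reduction fails'') overlooks a third tool that is available and that the paper leans on, namely Lemma~\ref{lemma:fr}. Its consequences are \emph{unconditional}, holding at every point for every $3$-coloring without monochromatic solutions to $E(\frac{3}{2},3)$, with no hypothesis of the form $c(2x)=c(x)$: in particular $c(y)=c(\frac{8}{5}y)$ and $c(y)=c(\frac{64}{27}y)$ for all $y$, while $y$, $\frac{4}{3}y$, $\frac{16}{9}y$ always receive three distinct colors. Taking $y=\frac{z}{8}$ in the first relation gives $c(\frac{z}{8})=c(\frac{z}{5})$, and taking $y=\frac{z}{64}$ in the second gives $c(\frac{z}{64})=c(\frac{z}{27})$; so negative powers of $5$ and of $3$ reduce, unconditionally, to negative powers of $2$, and the exact period-$3$ behavior under division by $3$ comes for free from the $\frac{4}{3}$-relations. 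Thus the only bespoke downward step needed is $c(\frac{x}{2})=c(x)$; the separate solution-hunting you anticipate for $\frac{1}{5}$ and $\frac{1}{3}$ is unnecessary. So beyond leaving the key step unexecuted, your plan also makes that step harder than it needs to be.
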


The following lemma gets us much closer to proving that $c_{3,3}$ is the
only $3$-coloring of the nonzero rational numbers for which there is a
nonzero rational number $x$ such that $c(x)=c(2x)$.

\begin{lemma} \label{lemma:3multiples2}
If $x$ is a nonzero rational number such that $c(x)=c(2x)$, then for
every positive integer $n$, we have $c(nx)=c(x)$ if and only if $v_3(n)$
is a multiple of $3$.
\end{lemma}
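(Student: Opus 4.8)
The plan is to reduce the statement to a clean multiplicative claim, prove that claim by strong induction, and isolate the construction of explicit solutions to $E(\frac32,3)$ as the only genuine difficulty.

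First I would record the consequences of Lemma~\ref{lemma:235} that get used repeatedly. Since $c(2z)=c(z)$ holds for the given $x$, Lemma~\ref{lemma:235} applies with base $x$ and shows that $c(x)$, $c(3x)$, $c(9x)$ are three distinct colors and that $c(2^a3^b5^cx)$ depends only on $b\bmod 3$. Consequently it suffices to show $c(nx)=c\bigl(3^{v_3(n)}x\bigr)$ for every positive integer $n$: the stated ``if and only if'' then follows by comparing $3^{v_3(n)}x$ with $x$ via Lemma~\ref{lemma:235}. Writing $n=3^{v_3(n)}m$ with $\gcd(m,3)=1$ and putting $z=3^{v_3(n)}x$ (which satisfies $c(2z)=c(z)$ by Lemma~\ref{lemma:235}), this reduces to the universally quantified statement $Q(m)$: \emph{for every positive integer $m$ coprime to $3$ and every nonzero rational $z$ with $c(2z)=c(z)$, we have $c(mz)=c(z)$.} Note that, by Lemma~\ref{lemma:235} applied at base $z$, any such $z$ again has $c(z),c(3z),c(9z)$ distinct together with $c(2\cdot 3z)=c(3z)$ and $c(2\cdot 9z)=c(9z)$, so $3z$ and $9z$ remain legitimate base points.

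I would prove $Q(m)$ by strong induction on $m$ over the positive integers coprime to $3$. The base cases $m\in\{1,2,4,5\}$ are immediate: $Q(2)$ is the hypothesis on $z$, while $c(4z)=c(z)$ and $c(5z)=c(z)$ follow from Lemma~\ref{lemma:235} at base $z$. If $m>4$ is composite, factor it as $m=m_1m_2$ with $2\le m_1\le m_2<m$ and both factors coprime to $3$; since $m\ne 4$ one has $2m_1<m$, so the inductive hypothesis gives $c(m_1z)=c(z)$ and $c(2m_1z)=c(z)$. Hence $z':=m_1z$ satisfies $c(2z')=c(z')$, and applying $Q(m_2)$ at base $z'$ yields $c(mz)=c(m_2z')=c(z')=c(z)$. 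This leaves the case $m=p$ prime with $p\ge 7$, where it suffices, since $c(z),c(3z),c(9z)$ are distinct, to prove $c(pz)\ne c(3z)$ and $c(pz)\ne c(9z)$. For each I would exhibit a solution of $E(\frac32,3)$ in which $pz$ appears together with two companion values whose colors are forced to equal $c(3z)$ (respectively $c(9z)$); the absence of a monochromatic solution then gives the inequality. Each companion has the form $2^{-i}3^ekz$ with $e\in\{1,2\}$ and $k$ coprime to $3$ with $k<p$, so its color equals $c(3^ez)$ by dropping the power of $2$ via Lemma~\ref{lemma:235} at base $z$ and then invoking $Q(k)$ at base $3^ez$. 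For $c(pz)\ne c(9z)$ the triple $\bigl(\tfrac{9(p-3)}{4}z,\ \tfrac{9}{2}z,\ pz\bigr)$ is a solution whose first two entries are colored $c(9z)$, and it works uniformly for all $p\ge 5$. For $c(pz)\ne c(3z)$ I would take $\bigl(3\alpha z,\ pz,\ 3\gamma z\bigr)$ with $4\alpha+2p=9\gamma$, choosing an integer $\gamma$ coprime to $3$ with $2p/9<\gamma<p/3$, so that $\alpha=(9\gamma-2p)/4$ is automatically coprime to $3$ and small enough for the inductive hypothesis to identify both companion colors as $c(3z)$.

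The main obstacle is exactly this last Diophantine bookkeeping: guaranteeing, for every prime $p\ge 7$, that the parameters can be chosen so that (i) the coefficients of the constructed triple realize the prescribed $3$-adic valuations, which are constrained modulo $9$ by reducing $4y_0+6y_1=9y_2$, and (ii) every companion's coprime-to-$3$ part stays below $p$, so that $Q(k)$ is applicable. The interval for $\gamma$ above has length $p/9$, hence contains a suitable integer for all sufficiently large $p$; the finitely many small primes are dispatched by explicit triples, using rational entries whose denominators are powers of $2$ (invisible to $c$ by Lemma~\ref{lemma:235}) to gain room, for instance $(3z,7z,6z)$ for $p=7$. This prime-by-prime construction is directly analogous to the inductions in the proofs of Propositions~\ref{proposition:E23} and~\ref{proposition:E24}.
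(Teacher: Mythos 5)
Your proof is correct and takes essentially the same approach as the paper's: a strong induction reducing to primes $p\ge 7$ at arbitrary valid bases, with the two exclusions $c(pz)\ne c(9z)$ and $c(pz)\ne c(3z)$ obtained from explicit solutions of $E(\frac{3}{2},3)$ whose companion entries are colored via Lemma~\ref{lemma:235} and the induction hypothesis, followed by pigeonhole on the three distinct colors $c(z)$, $c(3z)$, $c(9z)$. One remark: your integer-$\gamma$ construction fails precisely at $p=11$ (the interval $(22/9,11/3)$ contains only the integer $3$, a multiple of $3$), but the power-of-two-denominator fallback you describe does repair it, e.g.\ via the solution $(\frac{3}{8}z,\,11z,\,\frac{15}{2}z)$ whose companions are colored $c(3z)$ by Lemma~\ref{lemma:235} alone, whereas the paper sidesteps this issue by placing $px$ in the first slot and using the companions $6x$ and $(3a+5)\frac{8}{9}x$ (resp.\ $(3a+7)\frac{8}{9}x$) according to whether $p=6a+1$ or $p=6a+5$.
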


Finally, to finish the proof that $c_{3,3}$ is the only $3$-coloring $c$
of the nonzero rational numbers without a monochromatic solution to
$E(\frac{3}{2},3)$ and for which there is a rational number $x$ such
that $c(x)=c(2x)$, it suffices to prove that $c(y)=c(-y)$ for all $y$.

\begin{lemma} \label{lemma:onlyc33}
The only $3$-coloring $c$ of the nonzero rational numbers for which
there is a rational number $x$ such that $c(x)=c(2x)$ is $c_{3,3}$.
\end{lemma}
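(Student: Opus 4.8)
The plan is to assemble the structural lemmas into a complete description of $c$ on the positive rationals and then reduce the whole statement to invariance under negation. Throughout I would work with the cleared form of the equation, $4x_0+6x_1=9x_2$, and write $W=\{w\in\mathbb{Q}\setminus\{0\}:c(w)=c(2w)\}$ for the set on which the hypotheses of Lemmas~\ref{lemma:2powers}--\ref{lemma:3multiples2} are available; by assumption $W\ne\emptyset$. First I would note that membership in $W$ propagates. If $w\in W$, then for every positive integer $n$ with $v_3(n)\equiv 0\pmod 3$ Lemma~\ref{lemma:3multiples2} (with base $w$) gives $c(nw)=c(w)=c(2nw)$, so $nw\in W$; Lemma~\ref{lemma:36} gives $3w\in W$, and iterating shows $3^a(kw)\in W$ whenever $v_3(k)\equiv 0$. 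Hence $nw\in W$ for \emph{every} positive integer $n$. Then Lemma~\ref{lemma:235} forces $c(w),c(3w),c(9w)$ to be the three distinct colors, and Lemma~\ref{lemma:3multiples2} forces $c(nw)$ to depend only on $v_3(n)\bmod 3$. Fixing one $x\in W$ and relabeling colors, this says $c$ agrees with $c_{3,3}$ along the ray $\{nx:n\ge 1\}$.

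The substantive extra step is to pass from integer multiples of a single base point to \emph{all} positive rationals, that is, to handle denominators. For a positive rational $y$ the identity $by=ax$ (when $y=(a/b)x$ in lowest terms), together with Lemma~\ref{lemma:3multiples2} applied at base $x$, pins $c(ax)$ in terms of $v_3(a)$; what remains is to show $y\in W$, after which a second application of Lemma~\ref{lemma:3multiples2} at base $y$ determines $c(y)$ from $v_3(b)$ and forces $c(y)=c_{3,3}(y)$. I would establish $y\in W$ for all positive rationals by an induction on the primes occurring in $y$, coupling the membership claim $y\in W$ with a color computation exactly in the spirit of the odd-prime inductions closing the proofs of Propositions~\ref{proposition:E23} and~\ref{proposition:E24}: to introduce a new prime one exhibits solutions of $4x_0+6x_1=9x_2$ whose remaining entries are multiples of $y$ of two already-known, distinct colors, so that the two resulting color constraints exclude two of the three colors and force the last. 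This yields that $c$ restricted to the positive rationals equals $c_{3,3}$ up to the fixed relabeling.

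Finally, as flagged in the text, it suffices to prove $c(-y)=c(y)$ for every nonzero rational $y$. For fixed $y>0$ I would produce two solutions of $4x_0+6x_1=9x_2$ in which $-y$ occupies one variable and the other two variables are positive rationals whose colors are already determined, by the previous paragraph, to be the two colors different from $c_{3,3}(y)$. Since monochromatic solutions are forbidden, each such solution excludes one color for $-y$, and together they force $c(-y)=c_{3,3}(y)=c(y)$; this mirrors the closing sign arguments of Propositions~\ref{proposition:E23} and~\ref{proposition:E24}. Combining the positive-rational determination with this sign invariance gives $c=c_{3,3}$, as required.

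I expect the main obstacle to be the denominator extension and the sign step in tandem: the real work is the explicit search for solutions of $4x_0+6x_1=9x_2$ that, via the forbidden-ratio relations of Lemma~\ref{lemma:fr} and the color values already computed on $W$, leave exactly one admissible color for each target element. Arranging that every target, first a smaller-denominator positive rational and then a negative, is pinned by precisely two exclusions is the delicate part, and it is complicated here (unlike for $E(2,3)$ and $E(2,4)$) by the need to carry the auxiliary claim $y\in W$ through the induction, since for $E(\tfrac{3}{2},3)$ the relation $c(w)=c(2w)$ is not available a priori at every point. Once suitable pairs of solutions are found, the cyclic $v_3\bmod 3$ bookkeeping guarantees that they force the predicted color.
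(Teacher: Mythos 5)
Your plan reproduces the two ingredients the paper actually writes down---the reduction ``structure on positive multiples $+$ sign invariance $\Rightarrow c=c_{3,3}$'' and the sign step forced by two solutions whose other entries carry the two colors different from $c(y)$ (the paper uses $(-y,\frac{85}{6}y,9y)$ and $(-y,3y,\frac{14}{9}y)$)---and your propagation of membership in $W$ to all positive integer multiples via Lemmas~\ref{lemma:36} and~\ref{lemma:3multiples2} is correct. You are also right to flag the denominator extension: Lemmas~\ref{lemma:235} and~\ref{lemma:3multiples2} only control $c$ at bases $w$ with $c(w)=c(2w)$, hence on positive-integer and $\{2,3,5\}$-smooth multiples of such bases, and one does need $y\in W$ for every $y$ before the sign step can be run at base $y$.

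The genuine gap is in the mechanism you propose for that step. You want to pin down an element $y$ whose denominator involves a new prime $p>5$ by exhibiting solutions of $4x_0+6x_1=9x_2$ whose two remaining entries are already-determined elements, ``in the spirit of'' the odd-prime inductions of Propositions~\ref{proposition:E23} and~\ref{proposition:E24}. No such solutions exist. Since $p\nmid 4,6,9$, Lemma~\ref{lemma:valuation} says that in any solution the minimum of $v_p(x_0),v_p(x_1),v_p(x_2)$ is attained at least twice; your target has $v_p<0$, while every already-determined element (an integer times a $\{2,3,5\}$-smooth rational times the base, possibly times previously handled primes) has $v_p\geq 0$. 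So every solution containing the target contains a second not-yet-determined element, and your scheme yields no constraint whatsoever on the first element carrying $p$ in its denominator---neither its color nor the auxiliary claim $y\in W$ can be forced by two exclusions. This is precisely the asymmetry between numerator and denominator: for a new prime in the \emph{numerator} the two known entries sit at $v_p=0$ and the valuation lemma is satisfied, which is why the inductions you cite do work there. Any repair must argue about several undetermined elements simultaneously (compare the proofs of Lemmas~\ref{lemma:iff3multiple} and~\ref{lemma:iff3multiple2}, e.g.\ the step where $\frac{26}{3}x$, $\frac{52}{3}x$, $\frac{104}{3}x$ are treated as a block, one of which must carry the excluded color), rather than pinning one target at a time; as written, your induction cannot get started at any new prime.
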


Having completed the case when there is a nonzero rational number $x$
such that $c(x)=c(2x)$, we now look at those $3$-colorings for which
$c(x) \ne c(2x)$ for all nonzero rational numbers $x$.

\begin{lemma} \label{lemma:iff3multiple}
If $c(x) \ne c(2x)$ for every nonzero rational number $x$, then
$c(2^ny)=c(y)$ holds for integer $n$ and nonzero rational number $y$ if
and only if $n$ is a multiple of $3$.
\end{lemma}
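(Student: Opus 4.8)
The plan is to reduce the statement to the single claim that $c(y)\neq c(4y)$ for every nonzero rational number $y$. The hypothesis that $c(w)\neq c(2w)$ for all $w$ already gives $c(y)\neq c(2y)$ and $c(2y)\neq c(4y)$; once I also know $c(y)\neq c(4y)$, the three values $c(y),c(2y),c(4y)$ are distinct, and since only three colors are available the sequence $c(2^ny)$ must cycle with period exactly three. Concretely, applying $c(w)\neq c(4w)$ with $w=2y$ gives $c(2y)\neq c(8y)$, and together with $c(4y)\neq c(8y)$ this forces $c(8y)=c(y)$; a routine induction, run in both directions by replacing $y$ with $2^ky$, then yields $c(2^ny)=c(y)$ if and only if $3\mid n$. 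So the entire lemma rests on proving $c(y)\neq c(4y)$.

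To prove $c(y)\neq c(4y)$ I would argue by contradiction, in the spirit of the proof of Lemma~\ref{lemma:muliple4}: assume $c(x)=c(4x)$ for some $x$ and deduce that some rational number admits none of the three colors. Writing $E(\tfrac32,3)$ in the cleared form $4x_0+6x_1=9x_2$, I have three families of tools for propagating colors. First, Lemma~\ref{lemma:fr}, as already unwound at the start of this subsection, gives $c(z)=c(rz)$ for the color-preserving ratios $\tfrac85,\tfrac{64}{27},\tfrac{40}{27}$ and $c(z)\neq c(rz)$ for $\tfrac65,\tfrac{10}{9},\tfrac43$; concretely, on the orbit of any point under $\langle\frac65,\frac{10}{9}\rangle$ the color is governed by $v_3\bmod 3$. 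Second, the standing hypothesis supplies the extra forbidden ratio $2$, so $c(z)\neq c(2hz)$ whenever $h$ lies in the color-preserving subgroup above. Third, the assumption $c(x)=c(4x)$ glues the two color-$c(x)$ classes $x\langle\frac65,\frac{10}{9}\rangle$ and $4x\langle\frac65,\frac{10}{9}\rangle$ into one. Using these, I would read off the colors of a cluster of rationals obtained by successively substituting known-colored numbers into $4x_0+6x_1=9x_2$, just as the numbers $\tfrac94x$, $6x$, and finally the uncolorable $\tfrac{18}{7}x$ were produced in the $E(2,4)$ argument.

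The main obstacle is precisely that last step: locating a short, explicit chain of solutions that corners a rational number into having no legal color. The difficulty is structural. The three cheap relations above — multiplication by the color-preserving subgroup, by the rest of $\langle\frac65,\frac{10}{9}\rangle$ (shifting the color by $v_3$), and by $2$ (always color-changing) — never let one leave a fixed residue class of $v_2\bmod 3$ in a controlled way; in fact they seem not even to suffice to exhibit a single point of the third color. Consequently the argument must invoke genuine three-variable solutions of $4x_0+6x_1=9x_2$ that mix a factor of $2$ with the primes $3$ and $5$. Such solutions inevitably introduce an auxiliary prime (just as $7$ enters through $\tfrac{18}{7}x$ in the $E(2,4)$ proof), and choosing the primes and the order of substitutions so that the forbidden colors accumulated at some vertex exhaust all of $\{0,1,2\}$ is the delicate, case-based heart of the proof. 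Once such a vertex is found, the assumption $c(x)=c(4x)$ is contradicted, $c(y)\neq c(4y)$ holds for every $y$, and the periodicity statement follows as in the first paragraph.
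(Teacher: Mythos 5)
Your first paragraph is correct and coincides with the paper's opening move: the paper likewise begins ``it suffices to prove that $c(y)\ne c(4y)$ for all nonzero rational $y$,'' and your derivation of the period-$3$ behavior from that claim (distinctness of $c(y),c(2y),c(4y)$, then $c(8y)=c(y)$ by elimination, then induction in both directions) is a valid filling-in of that reduction. The problem is everything after that: the claim $c(y)\ne c(4y)$ is the entire content of the lemma, and you do not prove it. You assemble the right toolkit --- the ratios $\frac65,\frac{10}{9},\frac43$ (color-changing) and $\frac85,\frac{64}{27},\frac{40}{27}$ (color-preserving) from Lemma~\ref{lemma:fr}, the forbidden ratio $2$ from the standing hypothesis, and solutions of $4x_0+6x_1=9x_2$ --- but you then explicitly defer ``locating a short, explicit chain of solutions that corners a rational number into having no legal color,'' calling it the delicate heart of the proof. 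That deferred step is precisely what the paper carries out, and without it you have a plan, not a proof.

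For comparison, here is how the paper closes the gap. Assuming $c(y)=c(4y)$, it names the colors (red $=c(y)$, blue $=c(2y)$, green the third), observes $3y$ is green or blue since $3y=\frac43(4y)$, and runs a case analysis. In Case 1 ($3y$ green) it propagates colors to $\frac{16}{3}y,\frac94y,\frac92y,9y,\dotsc$ and splits again: Case 1a ends with the monochromatic red solution $(\frac{15}{2}y,y,4y)$, and Case 1b ends with $\frac{28}{9}y$ admitting no color (note $y+\frac32(4y)=7y=\frac94\cdot\frac{28}{9}y$, so an auxiliary prime $7$ does enter here, as you anticipated). In Case 2 ($3y$ blue) the paper derives that $\frac{y}{2}$ is blue, i.e.\ $c(\frac{y}{2})=c(4\cdot\frac{y}{2})$, and then bootstraps: since Case 1 has been refuted, the Case 2 conclusions apply to $\frac{y}{2}$ in place of $y$, forcing $\frac{3}{2}y$ to be the color of $y$, contradicting that $\frac32y$ was shown green. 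This last self-referential step is an idea your sketch does not contemplate; one incidental consequence is that your structural claim --- that genuine three-variable solutions mixing in a new prime are unavoidable at the final contradiction --- is true in one branch but not in all of them. In any event, since the case analysis is missing entirely, the proposal has a genuine gap and does not establish the lemma.
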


\begin{lemma} \label{lemma:iff3multiple2}
If $c(x) \ne c(2x)$ for every nonzero rational number $x$, then
$c(2^m3^n5^py)=c(y)$ holds for nonzero rational number $y$ and integers
$m$, $n$, and $p$ if and only if $m$ is a multiple of $3$.
\end{lemma}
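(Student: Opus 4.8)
The plan is to reduce the lemma to two facts—$c(5y)=c(y)$ and $c(3y)=c(y)$ for every nonzero rational $y$—and then combine them with Lemma~\ref{lemma:iff3multiple}. Indeed, once both equalities are known, $c(3^{n}5^{p}y)=c(y)$ for all integers $n,p$ by iterating, so writing $z=3^{n}5^{p}y$ and applying Lemma~\ref{lemma:iff3multiple} to $z$ gives $c(2^{m}3^{n}5^{p}y)=c(2^{m}z)=c(z)=c(y)$ precisely when $m$ is a multiple of $3$.

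The equality $c(5y)=c(y)$ is immediate from what is already recorded: we have $c(\tfrac{8}{5}y)=c(y)$, and Lemma~\ref{lemma:iff3multiple} applied to $\tfrac{1}{5}y$ gives $c(\tfrac{8}{5}y)=c(8\cdot\tfrac{1}{5}y)=c(\tfrac{1}{5}y)$; hence $c(\tfrac{1}{5}y)=c(y)$, and replacing $y$ by $5y$ yields $c(5y)=c(y)$. For the second equality, note first that $c(3y)\ne c(4y)$, since $\tfrac{4}{3}$ is a forbidden ratio and $4y=\tfrac{4}{3}(3y)$. By Lemma~\ref{lemma:iff3multiple} the three colors $c(y),c(2y),c(4y)$ are distinct, so $c(3y)\in\{c(y),c(2y)\}$, and the entire content of $c(3y)=c(y)$ is the exclusion of the possibility $c(3y)=c(2y)$.

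I would therefore argue by contradiction: suppose $c(3y)=c(2y)$ for some $y$. Using the forbidden ratios $2,4,\tfrac{4}{3},\tfrac{6}{5},\tfrac{10}{9}$ together with $c(5\,\cdot)=c(\cdot)$, $c(8\,\cdot)=c(\cdot)$ and the fact that $c(y),c(2y),c(4y)$ run through all three colors, one checks that $c$ is forced on every $\{2,3,5\}$-smooth multiple of $y$ to agree with the ``twisted'' coloring $q\mapsto\bigl(v_2(q)+v_3(q)\bigr)\bmod 3$. The key difficulty is that this twisted coloring has \emph{no} monochromatic solution to $4x_0+6x_1=9x_2$ among $\{2,3,5\}$-smooth numbers—this can be verified by running through the finitely many solutions of $A+B=C$ in positive coprime $\{2,3,5\}$-units—so no contradiction is available within the smooth numbers, and one is forced to bring in a prime outside $\{2,3,5\}$. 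Concretely, $4\cdot 3+6\cdot 22=9\cdot 16$ forces $c(22y)\ne c(3y)=c(16y)$, while $4\cdot 12+6\cdot 25=9\cdot 22$ forces $c(22y)\ne c(12y)=c(25y)$; since $c(3y)$ and $c(12y)$ are different colors, these two relations pin $c(22y)$ to the unique remaining color $c(4y)$.

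The main obstacle is exactly this final stage. Because the twisted smooth coloring is itself solution-free, excluding $c(3y)=c(2y)$ cannot be done locally: one must keep propagating the forced colors to numbers carrying larger prime factors (the computation above determines $c$ on the multiples of $11$, and similar solutions determine it on the multiples of further primes) until a genuine monochromatic solution of $4x_0+6x_1=9x_2$ appears. Equivalently, the assumption forces $c$ to be a homomorphism coloring $\Theta\colon\mathbb{Q}^{\times}\to\mathbb{Z}/3$ with $\Theta(2)=\Theta(3)=1$ and $\Theta(5)=0$, and one must show that no such homomorphism coloring—other than the single-prime colorings underlying $c_{2,3}$ and $c_{3,3}$—is free of monochromatic solutions. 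I expect verifying this inconsistency, i.e.\ exhibiting the monochromatic solution that the forced coloring cannot avoid, to be the technically heaviest part of the proof.
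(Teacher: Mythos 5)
Your setup is correct and matches the paper's own reduction: you prove $c(5y)=c(y)$ exactly as the paper does (via the same-color ratio $\frac{8}{5}$ and Lemma~\ref{lemma:iff3multiple}), and you correctly reduce everything else to excluding the possibility $c(3y)=c(2y)$. Your structural diagnosis is also accurate: the assumption $c(3y)=c(2y)$ forces $c$ to agree with the coloring $q\mapsto v_2(q)+v_3(q)\bmod 3$ on $\{2,3,5\}$-smooth multiples of $y$, and that coloring really is free of monochromatic solutions to $4x_0+6x_1=9x_2$ among smooth numbers, so the contradiction must involve a prime outside $\{2,3,5\}$. But this is precisely where your proof stops. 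You pin down $c(22y)=c(4y)$ and then declare the remaining step---actually exhibiting the inconsistency---to be ``the technically heaviest part,'' which you expect but do not carry out. Since excluding $c(3y)=c(2y)$ \emph{is} the entire content of the lemma beyond bookkeeping, this is a genuine gap, not a detail. (Your closing reformulation is also not quite right: the hypothesis forces $c$ to agree with a homomorphism coloring only on smooth multiples of $y$; it does not make $c$ globally of homomorphism form, so proving that no homomorphism $\Theta$ with $\Theta(2)=\Theta(3)=1$, $\Theta(5)=0$ is solution-free would not by itself close the argument.)

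For comparison, the paper closes this case in three lines using the prime $13$ rather than $11$, and the trick is to aim for a number $z$ outside the smooth world such that $z$, $2z$, $4z$ are \emph{all} barred from one common color. Assuming $c(3x)=c(2x)$ (``blue''), one gets that $24x=8(3x)$, $50x=25(2x)$, and $\frac{64}{9}x=\frac{64}{27}(3x)$ are all blue. The triples $\bigl(3x,24x,\frac{52}{3}x\bigr)$, $\bigl(3x,50x,\frac{104}{3}x\bigr)$, and $\bigl(3x,\frac{26}{3}x,\frac{64}{9}x\bigr)$ are solutions of $E(\frac{3}{2},3)$, so none of $\frac{26}{3}x$, $\frac{52}{3}x$, $\frac{104}{3}x$ can be blue. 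But these are $z$, $2z$, $4z$ with $z=\frac{26}{3}x$, hence by Lemma~\ref{lemma:iff3multiple} they carry three distinct colors, and with only three colors available one of them must be blue---a contradiction. Your computation with $22y$ could in principle be completed in the same spirit, but it would require pinning down all three of $22y$, $44y$, $88y$, i.e., strictly more solution-hunting than you did; as written, knowing $c(22y)=c(4y)$ alone contradicts nothing.
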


\begin{lemma} \label{lemma:neg}
If $c(x) \ne c(2x)$ for all nonzero rational numbers $x$, then
$c(y)=c(-y)$ for all nonzero rational numbers $y$.
\end{lemma}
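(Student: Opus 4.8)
The plan is to follow the sign-handling portion of the proofs of Propositions~\ref{proposition:E23} and~\ref{proposition:E24}. First I would record that in the present case (where $c(x)\ne c(2x)$ for all nonzero rational $x$) the numbers $y$, $2y$, and $4y$ receive all three colors: $c(y)\ne c(2y)$ is the standing hypothesis, $c(2y)\ne c(4y)$ is the same hypothesis applied to $2y$, and $c(y)\ne c(4y)$ follows from Lemma~\ref{lemma:iff3multiple} since $2$ is not a multiple of $3$. Consequently it suffices to show that $c(-y)$ differs from both $c(2y)$ and $c(4y)$, for then $-y$ is forced to take the remaining color $c(y)$.

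Next I would exhibit two explicit solutions of $E(\frac{3}{2},3)$, i.e.\ of $4x_0+6x_1=9x_2$, that pin down these two inequalities. For the first, the triple $(6y,-y,2y)$ is a solution, and $c(6y)=c(2y)$ by Lemma~\ref{lemma:iff3multiple2} (multiplying $2y$ by $3$ preserves the color); since the coloring has no monochromatic solution, $-y$ cannot share the common color of $6y$ and $2y$, giving $c(-y)\ne c(2y)$. For the second, the triple $(-y,\frac{20}{3}y,4y)$ is a solution, and $c(\frac{20}{3}y)=c(4y)$ by Lemma~\ref{lemma:iff3multiple2} (multiplying $4y$ by $\frac{5}{3}$, whose $2$-adic valuation is $0$, preserves the color); hence $-y$ cannot share the common color of $\frac{20}{3}y$ and $4y$, giving $c(-y)\ne c(4y)$. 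Combining these two inequalities with the observation of the first paragraph yields $c(-y)=c(y)$, as desired.

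The only genuine constraint, and hence where the care is needed, is that Lemma~\ref{lemma:iff3multiple2} supplies color-preserving multipliers only among the primes $2$, $3$, and $5$, so each auxiliary solution must be engineered so that the ratios between its like-colored entries involve no other primes. Both solutions above are chosen with exactly this in mind---the extra factors are $3$ and $\frac{5}{3}$---so no prime outside $\{2,3,5\}$ ever enters, and the argument goes through without needing any information about how $c$ behaves on multiples by larger primes.
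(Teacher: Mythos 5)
Your proof is correct and takes essentially the same approach as the paper's: both use Lemma~\ref{lemma:iff3multiple2} to exhibit solutions of $E(\frac{3}{2},3)$ whose other two entries share the color of $2y$ (resp.\ $4y$), forcing $c(-y)\ne c(2y)$ and $c(-y)\ne c(4y)$, and then conclude by pigeonhole since $y$, $2y$, $4y$ exhaust the three colors. The only difference is the choice of witness solutions---the paper uses $(2y,-y,\frac{2}{9}y)$ and $(-y,\frac{4}{3}y,\frac{4}{9}y)$ where you use $(6y,-y,2y)$ and $(-y,\frac{20}{3}y,4y)$---which is immaterial.
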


Finally, to finish the proof of Proposition~\ref{proposition7} that
$c_{2,3}$ and $c_{3,3}$ are the only $3$-colorings of the nonzero
rational numbers without a monochromatic solution to $E(\frac{3}{2},3)$,
it suffices to prove the following lemma.

\begin{lemma} \label{lemma:3multiple3}
If no nonzero rational number $x$ satisfies $c(x)=c(2x)$, then
$c(nx)=c(x)$ if and only if $v_2(n)$ is a multiple of $3$.
\end{lemma}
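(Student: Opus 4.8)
The plan is to reduce the whole statement to a single claim: multiplication by any odd number never changes the color. Clearing denominators, the equation $E(\tfrac32,3)$ reads $4x_0+6x_1=9x_2$, and I will work only with this integral form. By Lemma~\ref{lemma:iff3multiple} the numbers $x$, $2x$, $4x$ receive the three distinct colors of $c$, and by Lemma~\ref{lemma:iff3multiple2} the color of any number of the form $2^m3^n5^p y$ depends only on $m \bmod 3$. Consequently, once I know $c(qx)=c(x)$ for every odd positive integer $q$ and every nonzero rational $x$, writing $n=2^a m$ with $m$ odd gives $c(nx)=c(m\cdot 2^a x)=c(2^a x)$ (apply odd-invariance with $y=2^a x$), which by Lemma~\ref{lemma:iff3multiple} equals $c(x)$ exactly when $a=v_2(n)\equiv 0\pmod 3$; the extension to negative $n$ is then immediate from Lemma~\ref{lemma:neg}. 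So the lemma reduces to the claim $c(qx)=c(x)$ for all odd positive integers $q$.

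I would prove this claim by strong induction on the odd integer $q$. The base cases $q\in\{1,3,5\}$ follow directly from Lemma~\ref{lemma:iff3multiple2}. For odd $q\ge 7$, I assume $c(q'y)=c(y)$ for every odd $q'<q$ and every $y$; then every odd number all of whose prime factors are less than $q$ is \emph{strippable}, meaning multiplication by it preserves color (factor it into primes and apply the hypothesis one prime at a time). Since $c(x)$, $c(2x)$, $c(4x)$ are all three colors, it suffices to show $c(qx)\neq c(2x)$ and $c(qx)\neq c(4x)$, as this forces $c(qx)=c(x)$.

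For the two forbidden configurations I would use the one-parameter family of solutions
\[ (x_0,x_1,x_2)=\Bigl(\tfrac{9k}{4}\,x,\ \tfrac{3(q-k)}{2}\,x,\ q x\Bigr), \]
which satisfies $4x_0+6x_1=9x_2$ for every $k$. Taking $k=q-4$ yields $(\tfrac{9(q-4)}{4}x,\,6x,\,qx)$: here $6x$ has color $c(2x)$ by Lemma~\ref{lemma:iff3multiple2}, while $\tfrac{9(q-4)}{4}x$ has $2$-adic valuation $\equiv 1\pmod 3$ and odd part $9(q-4)$ whose prime factors are all less than $q$, so it too has color $c(2x)$; thus $c(qx)=c(2x)$ would make this triple monochromatic, a contradiction. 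Taking instead $k=2$ yields $(\tfrac92 x,\,\tfrac{3(q-2)}{2}x,\,qx)$, whose first two entries both have valuation $\equiv 2\pmod 3$ and strippable odd parts, hence color $c(4x)$, ruling out $c(qx)=c(4x)$. These two cases complete the induction, and the reduction above then gives the lemma.

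The step I expect to be the main obstacle is locating this solution family, since the constraints are rigid: the triple must solve $4x_0+6x_1=9x_2$; two of its coordinates must lie in a single prescribed color class, which pins their valuations to a fixed residue mod $3$ and forces their odd parts to factor over primes below $q$; and $q$ may enter those coordinates only through a factor $q-k$ with $0<q-k<q$, so that it can be stripped. The way I would resolve this is to let the coefficient of $x_1$ absorb the term proportional to $q$ — this is exactly what dictates the shape $\tfrac{3(q-k)}{2}x$ and leaves $x_0=\tfrac{9k}{4}x$ free of $q$ — after which the parameter $k$ (namely $k=q-4$ and $k=2$) is chosen purely to steer the two valuations into the required residues. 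The remaining verifications, that the displayed triples are genuine solutions and that $9(q-4)$ and $3(q-2)$ are strippable because $q-4,q-2<q$ are odd, are then direct computations.
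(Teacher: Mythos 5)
Your proof is correct and follows essentially the same route as the paper: the same reduction via Lemmas~\ref{lemma:iff3multiple}, \ref{lemma:iff3multiple2}, and \ref{lemma:neg} to showing odd multipliers preserve color, the same strong induction on odd $q$, and your parameter choices $k=q-4$ and $k=2$ in the family $\bigl(\tfrac{9k}{4}x,\ \tfrac{3(q-k)}{2}x,\ qx\bigr)$ reproduce exactly the paper's two solutions $(\tfrac{9}{4}(p-4)x,\,6x,\,px)$ and $(\tfrac{9}{2}x,\,\tfrac{3}{2}(p-2)x,\,px)$. The only cosmetic difference is that you strip the full odd parts $9(q-4)$ and $3(q-2)$ prime by prime, where the paper handles the powers of $2$ and $3$ by Lemma~\ref{lemma:iff3multiple2} and only the factors $p-4$, $p-2$ by the induction hypothesis.
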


\begin{proof}[Proof of Lemma~\ref{lemma:2powers}.]
By induction on $n$, it suffices to prove that $c(4x)=c(x)$ if
$c(x)=c(2x)$.  Assume for contradiction that $c(x) = c(2x) \ne c(4x)$.
Then we know that $c(3x)$ must further be distinct from $c(x)$ and
$c(4x)$ because of the forbidden ratio from $4x$ and the solution
$(3x,x,2x)$.  It follows that $c(\frac{10}{3}x) = c(x)$ because of
forbidden ratios from $3x$ and $4x$.  Similarly, $c(\frac{5}{2}x) =
c(4x)$ because of forbidden ratios from $3x$ and $\frac{10}{3}x$.  It
follows that $c(\frac{13}{3}x) = c(3x)$ because of the solutions $(x,
\frac{13}{3}x, \frac{10}{3}x)$ and $(\frac{5}{2}x, \frac{13}{3}x, 4x)$.
Similarly, $c(\frac{9}{2}x) = c(4x)$ because of the solutions
$(\frac{9}{2}x, 2x, \frac{10}{3}x)$ and $(3x, \frac{9}{2}x,
\frac{13}{3}x)$.  We have $c(6x) = c(3x)$ because of a forbidden ratio
from $\frac{9}{2}x$ and the solution $(6x, x, \frac{10}{3}x)$.  Finally,
the number $\frac{3}{4}x$ cannot be colored with any colors, because of
a forbidden ratio from $x$ as well as the solutions $(\frac{9}{2}x,
\frac{3}{4}x, \frac{5}{2}x)$ and $(\frac{3}{4}x, 6x, \frac{13}{3}x)$.
\end{proof}

\begin{proof}[Proof of Lemma~\ref{lemma:3multiples}.]
By the previous lemma, we have $c(2^nx)=c(x)$ for all nonnegative
integers $n$. Since $c(\frac{5}{8}y)=c(y)$ for every nonzero rational
number $y$, then $c(2^n5^px)=c(x)$ for all nonnegative integers $n$ and
$p$. To finish the proof, it suffices, by induction, to prove that
neither $3x$ nor $9x$ is the same color as $x$, and $27x$ is the same
color as $x$. Since $3x$ and $4x$ have ratio $\frac{3}{4}$, then $c(3x)
\ne c(4x)=c(x)$. Since $9x$ and $16x$ have ratio
$(\frac{6}{5})^{-2}(\frac{10}{9})^{-2}$, then $c(9x) \ne
c(16x)=c(x)$. Since $27x$ and $64x$ have ratio $\frac{27}{64}$, then
$c(27x)=c(64x)=c(x)$.
\end{proof}

\begin{proof}[Proof of Lemma~\ref{lemma:36}.]
Assume for contradiction that $c(3x) \ne c(6x)$. By
Lemma~\ref{lemma:3multiples}, $c(x)=c(4x)=c(8x)$. Since
$4x=\frac{4}{3}(3x)$ and $8x=\frac{4}{3}(6x)$, then $x$, $3x$, and $6x$
are all different colors. Since $\frac{64}{27}(6x)=\frac{32}{9}x$, then
$\frac{32}{9}x$ is the same color as $6x$. Since
$(6x,\frac{4}{3}x,\frac{32}{9}x)$ is a solution to $E(\frac{3}{2},3)$,
then $c(\frac{4}{3}x)$ and $c(6x)$ are different colors. Since
$\frac{4}{3}x$ and $x$ have ratio $\frac{4}{3}$, then $\frac{4}{3}x$ and
$x$ are different colors. Hence, $\frac{4}{3}x$ is the same color as
$3x$. Since $\frac{3}{4}x$, $x$, and $\frac{4}{3}x$ are all different
colors, then $\frac{3}{4}x$ is the same color as $6x$. Since
$\frac{3}{4}(2x)=\frac{3}{2}x$, then $\frac{3}{2}x$ and $x$ are
different colors. If $\frac{3}{2}x$ and $3x$ are the same color, then by
Lemma~\ref{lemma:3multiples}, $6x$ would also be the same color as $3x$,
contradicting the assumption that $3x$ and $6x$ are different
colors. Hence, $\frac{3}{2}x$ is the same color as $6x$. But then
$\frac{3}{2}x$ and $\frac{3}{4}x$ are the same color, which implies, by
Lemma~\ref{lemma:3multiples}, that $3x$ is the same color as $6x$, a
contradiction.
\end{proof}

\begin{proof}[Proof of Lemma~\ref{lemma:235}.]
Clearly, from Lemma~\ref{lemma:3multiples} and Lemma~\ref{lemma:36}, we
have the result for \emph{nonnegative} integers.

Recall that, for every nonzero rational number $y$, we have
$c(y)=c(\frac{8}{5}y)$ and we also have $y$, $\frac{4}{3}y$, and
$\frac{16}{9}y$ are all different colors. Therefore, it suffices, by the
remark above and induction to prove that $c(\frac{x}{2})=c(x)$. Since
$c(3x)=c(6x)$ and $(6x,\frac{x}{2},3x)$ is a solution to
$E(\frac{3}{2},3)$, then $\frac{x}{2}$ is a different color from
$3x$. We have $\frac{80}{3}x=\frac{40}{27}(18x)$, so $18x$ and
$\frac{80}{3}x$ are the same color as $9x$. Since
$(\frac{x}{2},\frac{80}{3}x,18x)$ is a solution to $E(\frac{3}{2},3)$,
then $\frac{x}{2}$ and $9x$ are different colors. Hence, $\frac{x}{2}$
is the same color as $x$, which completes the proof.
\end{proof}

\begin{proof}[Proof of Lemma~\ref{lemma:3multiples2}.]
Suppose $c$ is a $3$-coloring of the nonzero rational numbers without a
monochromatic solution to $E(\frac{3}{2},3)$ and $x$ is a nonzero
rational number such that $c(x)=c(2x)$. By Lemma~\ref{lemma:235}, for
integers $m_1$, $n_1$, $p_1$, $m_2$, $n_2$, and $p_2$ we have
$c(2^{n_1}3^{m_1}5^{p_1}x)=c(2^{n_2}3^{m_2}5^{p_2}x)$ if and only if
$m_2-m_1$ is a multiple of $3$. By induction, it suffices to prove for
every prime $p>3$, that $c(x)=c(px)$. For $p=5$, Lemma~\ref{lemma:235}
implies that $c(x)=c(px)$.

The proof is by induction on the size of $p$. Suppose $p$ is prime with
$p>5$. We write $p=6a+b$, where $a$ and $b$ are nonnegative integers and
$b \in \{1,5\}$. The induction hypothesis is that $c(q)=c(p'q)$ for
every nonzero rational number $q$ satisfying $c(q)=c(2q)$ and prime $p'$
satisfying $3<p'<p$. The induction hypothesis implies that $c(q)=c(p'q)$
for every nonzero rational number $q$ satisfying $c(q)=c(2q)$ and
positive odd integer $p'$ which is not a multiple of $3$ and satisfies
$p'<p$. Then we have for all $p$ that $c(px) \ne c(9x) =
c(\frac{9}{4}x)$ because of the solution $(\frac{9}{4}(p-6)x, 9x, px)$.
It suffices to show that in the two cases below, $c(px) \ne c(3x)$ since
$c(x)$, $c(3x)$, and $c(9x)$ are distinct.

\textbf{Case 1:} $p=6a+1$. For $a=1$, we have $p=7$, and $(3x,7x,6x)$ is
a solution to $E(\frac{3}{2},3)$, so $7x$ and $3x$ are different
colors. For $a>1$, we have $0<3a+5<6a+1$, so by the induction
hypothesis, we have $c((3a+5)q)=c(q)$ for every rational number $q$
satisfying $c(q)=c(2q)$, and in particular, for $q=\frac{8}{9}x$. The
numbers $6x$ and $\frac{8}{9}x$ are the same color as the color of $3x$
by Lemma~\ref{lemma:235}. Since $(px,6x,(3a+5)\frac{8}{9}x)$ is a
solution to $E(\frac{3}{2},3)$, then $px$ and $3x$ are different
colors. Hence, $c(px)=c(x)$.

\textbf{Case 2:} $p=6a+5$. For each prime $p>5$ of the form $p=6a+5$ we
have $3a+7<6a+5$, so by the induction hypothesis we have
$c((3a+5)q)=c(q)$ for every rational number $q$ satisfying $c(q)=c(2q)$,
and in particular, for $q=\frac{8}{9}x$. The numbers $6x$ and
$\frac{8}{9}x$ are the same color as the color of $3x$ by
Lemma~\ref{lemma:235}. Since $(px,6x,(3a+7)\frac{8}{9}x)$ is a solution
to $E(\frac{3}{2},3)$, then $px$ and $3x$ are different colors. Hence,
$c(px)=c(x)$.
\end{proof}

\begin{proof}[Proof of Lemma~\ref{lemma:onlyc33}.]
By Lemma~\ref{lemma:3multiples2}, it suffices to prove that $c(-y)=c(y)$
for all nonzero rational numbers $y$. By Lemma~\ref{lemma:3multiples2},
we have $c(\frac{85}{6}y)=c(9y)$. Since $(-y,\frac{85}{6}y,9y)$ is a
solution to $E(\frac{3}{2},3)$, then $-y$ and $9y$ are different
colors. By Lemma~\ref{lemma:3multiples2}, we have
$c(\frac{14}{9}y)=c(3y)$. Since $(-y,3y,\frac{14}{9}y)$ is a solution to
$E(\frac{3}{2},3)$, then $-y$ and $3y$ are different colors. Therefore,
we have $c(-y)=c(y)$, completing the proof.
\end{proof}

\begin{proof}[Proof of Lemma~\ref{lemma:iff3multiple}.]
It suffices to prove that $c(y) \ne c(4y)$ for all nonzero rational
numbers $y$. So suppose for contradiction that there is a nonzero
rational number $y$ such that $c(y)=c(4y)$. Let red be the color of $y$,
blue be the color of $2y$, and green be the remaining color.  Since
$3y=\frac{4}{3}(4y)$, then $3y$ is green or blue.

\textbf{Case 1:} $3y$ is green. Since $3y$, $4y$, and $\frac{16}{3}y$
are all different colors, then $\frac{16}{3}$y is blue. Since
$\frac{9}{4}y=\frac{27}{64}(\frac{16}{3}y)$, then $\frac{9}{4}y$ is
blue. Since $\frac{9}{2}y=2(\frac{9}{4}y)$, then $\frac{9}{2}y$ is not
blue. Since $(9y,2y,\frac{16}{3}y)$ is a solution to $E(\frac{3}{2},3)$,
then $9y$ is not blue. Since $9y=2(\frac{9}{2}y)$, then $\frac{9}{2}y$
and $9y$ are different colors. Therefore, either $\frac{9}{2}y$ is red
and $9y$ is green or $\frac{9}{2}y$ is green and $9y$ is red.

\textbf{Case 1a:} $\frac{9}{2}y$ is red and $9y$ is green. Since
$6y=\frac{4}{3}(\frac{9}{2}y)$, then $6y$ is not red. Since $6y=2(3y)$,
then $6y$ is not green. Hence, $6y$ is blue. Since $\frac{9}{2}y$, $6y$,
and $8y$ are all different colors, then $8y$ is green. Since
$12y=2(6y)$, then $12y$ is not blue. Since $12y=\frac{4}{3}(9y)$, then
$12y$ is red. Since $\frac{15}{2}y=\frac{5}{8}(12y)$, then
$\frac{15}{2}y$ is red. Then $(\frac{15}{2}y,y,4y)$ is a monochromatic
solution to $E(\frac{3}{2},3)$, a contradiction.

\textbf{Case 1b:} $\frac{9}{2}y$ is green and $9y$ is red. Since
$\frac{10}{3}y=\frac{5}{8}(\frac{16}{3}y)$, then $\frac{10}{3}y$ is
blue. Since $3y=2(\frac{3}{2}y)$ and $2y=\frac{4}{3}(\frac{3}{2}y)$,
then $\frac{3}{2}y$ is red. Since
$\frac{5}{3}y=\frac{10}{9}(\frac{3}{2}y)$ and
$\frac{5}{3}y=\frac{5}{6}(2y)$, then $\frac{5}{3}y$ is green.  Finally,
$\frac{28}{9}y$ can not be colored with any colors because of the
solutions $(y,4y,\frac{28}{9}y)$, $(2y,\frac{10}{3}y,\frac{28}{9}y)$,
and $(\frac{9}{2}y,\frac{5}{3}y,\frac{28}{9}y)$.

\textbf{Case 2:} $3y$ is blue. Since $3y$, $4y$, $\frac{16}{3}y$ are all
different colors, then $\frac{16}{3}y$ is green. Since
$\frac{16}{3}y=2(\frac{8}{3}y)$, then $\frac{8}{3}y$ is not green.
Since $\frac{8}{3}y=\frac{4}{3}(2y)$, then $\frac{8}{3}y$ is not
blue. Hence, $\frac{8}{3}y$ is red. Since $\frac{3}{2}y$, $2y$,
$\frac{8}{3}y$ are all different colors, then $\frac{3}{2}y$ is
green. Since $\frac{9}{4}=\frac{27}{64}(\frac{16}{3}y)$, then
$\frac{9}{4}y$ is green. Since $\frac{9}{2}y=2(\frac{9}{4}y)$, then
$\frac{9}{2}y$ is not green. Since $(\frac{9}{2}y,y,\frac{8}{3}y)$ is a
solution to $E(\frac{3}{2},3)$, then $\frac{9}{2}y$ is not
red. Therefore, $\frac{9}{2}y$ is blue. Since
$8y=(\frac{6}{5})^2(\frac{10}{9})^2(\frac{9}{2}y)$, then $8y$ is not
blue. Since $8y=2(4y)$, then $8y$ is not red. Hence, $8y$ is
green. Since $5y=\frac{5}{8}(8y)$, then $5y$ is green.  Since
$\frac{32}{9}y=\frac{64}{27}(\frac{3}{2}y)$, then $\frac{32}{9}$ is
green. Since $(\frac{y}{2},5y,\frac{32}{9}y)$ is a solution to
$E(\frac{3}{2},3)$, then $\frac{y}{2}$ is not green. Since
$y=2(\frac{y}{2})$, then $\frac{y}{2}$ is not red. Hence, $\frac{y}{2}$
is blue.

We found a contradiction in Case 1, so if $y$ and $4y$ are the same
color, then $\frac{y}{2}$, $2y$, $3y$ are the same color.  Therefore,
$\frac{y}{4}$, $y$, $\frac{3}{2}y$ must be the same color. But
$\frac{3}{2}y$ is green and $y$ is red, a contradiction.
\end{proof}

\begin{proof}[Proof of Lemma~\ref{lemma:iff3multiple2}.]
It suffices, by induction and Lemma~\ref{lemma:iff3multiple}, to prove
that $c(y)=c(3y)=c(5y)$ for every $y$. By
Lemma~\ref{lemma:iff3multiple}, $c(y)=c(8y)$ for every $y$. Since
$8y=\frac{8}{5}(5y)$, then $5y$ is the same color as $8y$, so
$c(y)=c(5y)$ for all $y$. So suppose for contradiction that there is a
nonzero rational number $x$ such that $c(x) \ne c(3x)$. So, by
Lemma~\ref{lemma:iff3multiple}, $x$, $2x$, and $4x$ are all different
colors. Let red be the color of $x$, blue be the color of $2x$, and
green be the color of $4x$. Since $3x$ is a different color from $x$ and
$4x$, then $3x$ is blue. Since $24x=8(3x)$, then $24x$ is blue. Since
$50x=25(2x)$, then $50x$ is blue. Since
$\frac{64}{9}x=\frac{64}{27}(3x)$, then $\frac{64}{9}x$ is blue. Since
$(3x,24x,\frac{52}{3}x)$, $(3x,50x,\frac{104}{3}x)$, and
$(3x,\frac{26}{3}x,\frac{64}{9}x)$ are solutions to $E(\frac{3}{2},3)$,
then none of the numbers $\frac{26}{3}x$, $\frac{52}{3}x$,
$\frac{104}{3}x$ is blue. But $\frac{26}{3}x$, $\frac{52}{3}x$,
$\frac{104}{3}x$ are all different colors, so one of them has to be
blue, a contradiction.
\end{proof}

\begin{proof}[Proof of Lemma~\ref{lemma:neg}.]
By Lemma~\ref{lemma:iff3multiple2}, the numbers $2y$ and $\frac{2}{9}y$
are the same color and the numbers $4y$, $\frac{4}{3}y$, $\frac{4}{9}y$
are the same color. Since $(2y,-y,\frac{2}{9}y)$ and
$(-y,\frac{4}{3}y,\frac{4}{9}y)$ are solutions to $E(\frac{3}{2},3)$ and
$y$, $2y$, $4y$ are all different colors, then $c(y)=c(-y)$.
\end{proof}

\begin{proof}[Proof of Lemma~\ref{lemma:3multiple3}.]
By Lemma~\ref{lemma:iff3multiple2} and Lemma~\ref{lemma:neg}, for
integers $l$, $m$, $n$, and $p$ and nonzero rational number $y$, we have
$c((-1)^{l}2^{m}3^{n}5^{p}y)=c(y)$ if and only if $m$ is a multiple of
$3$. By induction, it suffices to prove for every odd $p \geq 3$, that
$c(x)=c(px)$. For $p=3$ or $p=5$, Lemma~\ref{lemma:iff3multiple2}
implies that $c(x)=c(px)$.

The proof is by induction on the size of $p$. Suppose $p$ is an odd
number with $p>5$. The induction hypothesis is that $c(q)=c(p'q)$ for
every nonzero rational number $q$ and odd number $p'$ that is less than
$p$.  We know that $c(px) \ne c(2x) = c(\frac{9}{4}x)=c(6x)$ because of
the solution $(\frac{9}{4}(p-4)x, 6x, px)$.  Similarly, $c(px) \ne c(4x)
= c(\frac{3}{2}x)=c(\frac{9}{2}x)$ because of the solution
$(\frac{9}{2}x, \frac{3}{2}(p-2)x, px)$.  Since $c(x)$, $c(2x)$, and
$c(4x)$ are distinct, $c(px) = c(x)$.
\end{proof}
\section{Minimal colorings for $x_1+x_2+x_3=4x_4$} \label{section3}
In this section we prove that the minimal colorings for
$x_1+x_2+x_3=4x_4$ are those of the form $c_{\Pi_5}$. It is a
straightforward check that each of the colorings of the form $c_{\Pi_5}$
is minimal for $x_1+x_2+x_3=4x_4$. We suppose for the rest of this
subsection that $c$ is a $4$-coloring of the nonzero rational numbers
without a monochromatic solution to $x_1+x_2+x_3=4x_4$.

\begin{lemma} \label{lemma:forb1}
If $x$ is a nonzero rational number and $r \in
\{\frac{4}{3},\frac{3}{2},2\}$, then $c(x) \ne c(rx)$.
\end{lemma}

\begin{lemma} \label{lemma:forb2}
For every nonzero rational number $x$, we have $c(x) \ne c(3x)$.
\end{lemma}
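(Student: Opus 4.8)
The plan is to argue by contradiction, exploiting the homogeneity of the equation to normalize. Suppose some nonzero rational number $x$ satisfies $c(x)=c(3x)$. Since $x_1+x_2+x_3=4x_4$ is homogeneous, the coloring $c'$ defined by $c'(y)=c(xy)$ is again a $4$-coloring of the nonzero rationals with no monochromatic solution, and $c'(1)=c(x)=c(3x)=c'(3)$. Thus it suffices to derive a contradiction from the single assumption that $c(1)=c(3)$; call this common color $A$.

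First I would lay down the forbidden-ratio scaffold coming from Lemma~\ref{lemma:forb1}. Because the ratios $\frac43$, $\frac32$, and $2$ are forbidden, the numbers $\frac12,\frac23,\frac34,\frac43,\frac32,2$ are all colored differently from $1$, and (scaling by $3$) the numbers $\frac94,4,\frac92,6$ are all colored differently from $3$; in particular none of these receives color $A$. Moreover, among $\{1,\frac43,\frac32,2\}$ the only pair at a non-forbidden ratio is $(\frac43,\frac32)$, whose ratio is $\frac98$, so this set is constrained like $K_4$ minus one edge: $1$ and $2$ receive distinct colors, say $A$ and $B$, while both $\frac43$ and $\frac32$ avoid $A$ and $B$. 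The same structure holds for the scaled copy $\{3,4,\frac92,6\}$, and since $c(3)=A$ this forces the colors of $4$ and $\frac92$ into the two colors other than $A$ and $c(6)$. Relabeling, I may take $c(4)=C$ and let $D$ be the fourth color, so that $c(\frac43),c(\frac32)\in\{C,D\}$.

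Next I would propagate colors through the web of rationals of the form $2^a3^b$ (with occasional factors of $5$) by repeatedly invoking concrete solutions of $x_1+x_2+x_3=4x_4$. For instance, solutions such as $(1,2,3,\frac32)$, $(3,3,2,2)$, $(6,2,4,3)$, and their scalings link the colors of $6,\frac92,\frac94,\frac83,\frac98$ and similar small multiples to the colors already fixed; each such solution either forces two numbers to share a color or excludes one color from a given number. The crucial leverage is that only four colors are available, so a rational for which all three remaining colors have been excluded cannot be colored at all, which is the contradiction I am aiming for.

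The main work, and the principal obstacle, is the case analysis. After fixing $c(1)=A$, $c(2)=B$, $c(4)=C$, the colors of $\frac43$ and $\frac32$ (each in $\{C,D\}$) split the argument into a few branches, and within each branch one must track enough multiples to corner a single rational. I expect the contradiction in every branch to take the form familiar from the other lemmas of this paper: exhibiting three solutions of $x_1+x_2+x_3=4x_4$ that forbid some target rational from being each of its three a priori possible colors. Carrying this out uniformly — selecting, in each branch, the right handful of solutions — is the delicate part, and the normalization to $x=1$ together with the $K_4$-minus-an-edge observation is what keeps the branching and the bookkeeping under control.
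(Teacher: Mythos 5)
Your setup is correct and coincides with the paper's opening moves: by homogeneity you may normalize to $c(1)=c(3)$, the forbidden ratios $\frac43,\frac32,2$ from Lemma~\ref{lemma:forb1} then force $c(1)=c(3)$, $c(2)$, $c(4)$ to be three distinct colors (note that $c(4)\ne c(1)$ is \emph{not} a forbidden-ratio fact on its own --- it uses $c(1)=c(3)$ and the ratio $\frac43$ from $3$, exactly as you argue via the scaled copy $\{3,4,\frac92,6\}$), and one may relabel colors so that $c(1)=c(3)=A$, $c(2)=B$, $c(4)=C$. Up to this point your argument is sound and is the same normalization the paper performs.

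However, what follows is not a proof but a declaration of intent. The entire content of this lemma lies in the case analysis you defer: ``I would propagate colors\dots'', ``I expect the contradiction in every branch\dots'', ``selecting, in each branch, the right handful of solutions\dots is the delicate part.'' You never exhibit the chains of forced colorings, never identify the branch structure beyond the first split, and never produce, in any branch, a rational number excluded from all four colors. There is no a priori guarantee that such chains exist or terminate quickly; establishing that they do \emph{is} the lemma. For comparison, the paper's proof is a computer-generated search tree (Table~\ref{table:proof}) of roughly fifty forced deductions spread over about a dozen branches, each branch ending at an uncolorable rational such as $\frac{11}{2}$, $\frac{27}{2}$, $20$, $17$, $\frac{15}{2}$, $\frac{21}{4}$, $\frac{32}{3}$, or $\frac72$. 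Your proposal also underestimates the arithmetic scope: the deductions cannot stay within numbers of the form $2^a3^b$ ``with occasional factors of $5$'' --- the paper's tree is forced through $7$, $10$, $11$, $13$, $14$, $17$, $20$, and the like before contradictions appear. So the proposal identifies the right strategy (the same one the paper automates) but omits the part that constitutes the proof.
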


\begin{lemma} \label{lemma:forb3}
For every nonzero rational number $x$, we have $c(x) \ne c(4x)$.
\end{lemma}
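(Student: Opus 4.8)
The plan is to argue by contradiction: suppose there is a nonzero rational $x$ with $c(x)=c(4x)$, and call this common color \emph{red}. I would first use only the forbidden ratios already available (the ratios $\frac43,\frac32,2$ from Lemma~\ref{lemma:forb1} and the ratio $3$ from Lemma~\ref{lemma:forb2}) to pin down a full rainbow near $x$. Indeed, the ratio $2$ gives $c(2x)\ne c(x)$, so name $c(2x)$ \emph{blue}; then $c(3x)$ avoids red (ratio $3$) and blue (ratio $\frac32$ from $2x$), and since the two as-yet-unused colors are interchangeable we may name $c(3x)$ \emph{green}; finally $c(6x)$ avoids blue (ratio $3$ from $2x$), green (ratio $2$ from $3x$), and red (ratio $\frac32$ from $4x$, which is red), so it is forced to the fourth color, \emph{yellow}. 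Thus all four colors already appear among $x,2x,3x,6x$, which is what makes a contradiction possible.

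The engine for the rest of the argument is the observation that whenever three points $a,b,c$ of a common color $C$ satisfy $a+b+c=4z$, the point $z$ cannot be colored $C$ (else $(a,b,c,z)$ is a monochromatic solution); combined with forbidden ratios, this lets me force the colors of a widening family of rational multiples of $x$. For example, the solution $(2x,2x,2x,\frac32 x)$ together with the ratios from $x$ and $3x$ forces $c(\frac32 x)$ to be yellow, while the solution $(4x,4x,8x,4x)$ together with the ratio $\frac43$ from $6x$ shows that $c(8x)$ is blue or green. I would continue propagating these constraints outward through multiples such as $\frac34 x,\frac94 x,\frac92 x,8x,9x,12x$ (and, as in the analogous proofs for $E(2,4)$, rationals with larger denominators), at each step either forcing a color outright or recording a binary case split.

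The goal of the cascade is to reach a single rational $z$ for which all four colors have been excluded --- red and one other color by forbidden ratios to two differently colored neighbors, and the remaining two colors by exhibiting, for each, three equally colored points summing to $4z$ --- exactly the ``uncolorable number'' contradiction used in the proofs of Lemmas~\ref{lemma:2powers}, \ref{lemma:iff3multiple}, and \ref{lemma:muliple4}. Concluding that no such $x$ exists then gives $c(x)\ne c(4x)$ for every nonzero rational $x$.

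The main obstacle is the bookkeeping of the case analysis in the middle stage: unlike the first splitting (where relabeling the two unused colors removes the case), the later branches are between colors that are already fixed, so they are genuine and must each be driven separately to an uncolorable number. The creative difficulty is choosing, in each branch, the right auxiliary rationals and the specific solutions $a+b+c=4z$ that corner some $z$; I expect the search to resemble the $E(2,4)$ analysis, where the contradiction surfaced only after introducing a rational (there $\frac{18}{7}x$) whose denominator is coprime to $6$.
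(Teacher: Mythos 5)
Your setup and overall strategy are exactly those of the paper's proof: assume $c(x)=c(4x)$, use the forbidden ratios $2$, $\frac32$, $\frac43$ (Lemma~\ref{lemma:forb1}) and $3$ (Lemma~\ref{lemma:forb2}) to conclude that $x$, $2x$, $3x$, $6x$ receive all four colors, then propagate forced colorings until some rational can receive no color. Your completed steps are also correct: $c(\frac32 x)=c(6x)$, and $c(8x)\notin\{c(x),c(6x)\}$. The genuine gap is that the core of the argument --- the cascade that actually corners an uncolorable rational --- is never carried out; you describe what it should accomplish (``I would continue propagating'', ``the goal of the cascade is to reach a single rational $z$ for which all four colors have been excluded'') and defer the choice of auxiliary rationals and solutions to a search you expect to succeed. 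That search is precisely the mathematical content of the lemma, so the proposal as written is a plan rather than a proof.

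For the record, the missing cascade is short and involves no case splits at all, contrary to your expectation of ``genuine'' branches that must each be driven separately to a contradiction. With the rainbow $x,2x,3x,6x$ and $c(\frac32 x)=c(6x)$ in hand, each of the following is forced in turn: $c(\frac94 x)=c(2x)$ (ratios from $3x$ and $\frac32 x$, plus the solution $(4x,4x,x,\frac94 x)$); then $c(\frac92 x)=c(x)$; then $c(9x)=c(2x)$; then $c(12x)=c(3x)$; then $c(8x)=c(2x)$ (ratios from $4x$, $12x$, $6x$ --- this resolves your binary split without branching); then $c(\frac12 x)=c(3x)$ (ratios from $x$ and $\frac32 x$, plus the solution $(8x,\frac12 x,\frac12 x,\frac94 x)$); and finally $\frac34 x$ has forbidden ratios to $x$, $\frac94 x$, $\frac12 x$, and $\frac32 x$, which among them carry all four colors --- a contradiction. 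Note also that every number in this chain lies in $\{2^a3^b x : a,b \in \mathbb{Z}\}$, so no rational with denominator coprime to $6$ (your anticipated analogue of the $\frac{18}{7}x$ from the $E(2,4)$ argument) is needed here.
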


\begin{lemma} \label{lemma:235-2}
For every nonzero rational number $x$ and integers $m$ and $n$, we have
$c(x) = c(2^m3^nx)$ if and only if $w_5(2^m3^n) \equiv 1 \pmod{5}$.
\end{lemma}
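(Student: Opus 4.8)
The plan is to mimic the Cayley-graph argument of Lemma~\ref{lemma:fr}, working in the lattice of exponents of $2$ and $3$. Fix a nonzero rational number $x$ and associate to each lattice point $(m,n) \in \mathbb{Z}^2$ the number $2^m 3^n x$; define $\chi(m,n) = c(2^m 3^n x)$, a coloring of $\mathbb{Z}^2$ with at most four colors. The forbidden ratios supplied by Lemmas~\ref{lemma:forb1},~\ref{lemma:forb2}, and~\ref{lemma:forb3} translate directly into distinctness of $\chi$ along certain lattice directions: the ratio $2 = 2^1 3^0$ gives the direction $(1,0)$, the ratio $3 = 2^0 3^1$ gives $(0,1)$, the ratio $4 = 2^2 3^0$ gives $(2,0)$, the ratio $\frac{3}{2} = 2^{-1} 3^1$ gives $(1,-1)$, and the ratio $\frac{4}{3} = 2^2 3^{-1}$ gives $(2,-1)$. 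Thus $\chi$ is a proper coloring of the Cayley graph $\Gamma(\mathbb{Z}^2, S)$ with $S = \{(1,0),(0,1),(2,0),(1,-1),(2,-1)\}$, and it remains to show that every proper $4$-coloring of this graph satisfies $\chi(m,n) = \chi(m',n')$ precisely when $m - n \equiv m' - n' \pmod{4}$.

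The first step is to observe that the four lattice points $(0,0)$, $(1,0)$, $(0,1)$, $(2,0)$ --- corresponding to $x$, $2x$, $3x$, $4x$ --- are pairwise joined by an element of $\pm S$, so they form a clique and must receive all four colors. This is the key place where the hypothesis that $c$ uses at most four colors is used. The second step is to establish the diagonal periodicity $\chi(m+1,n+1) = \chi(m,n)$ for every $(m,n)$. For this I would consider the triangle of points $(m+1,n)$, $(m,n+1)$, $(m+2,n)$, which is again a clique in $\Gamma(\mathbb{Z}^2,S)$ and hence carries three distinct colors; both $(m,n)$ and $(m+1,n+1)$ are adjacent to all three of these points, so each is forced to take the single remaining color, giving $\chi(m,n) = \chi(m+1,n+1)$.

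With diagonal invariance in hand, the problem collapses to one dimension. Writing $a_k = \chi(k,0)$, the invariance gives $\chi(m,n) = a_{m-n}$, and the elements of $S$ show that $a_k$, $a_{k+1}$, $a_{k+2}$, $a_{k+3}$ are pairwise distinct: the directions $(1,0)$ and $(2,0)$ give $a_k \ne a_{k+1}$ and $a_k \ne a_{k+2}$, while $a_k \ne a_{k+3}$ follows from the direction $(2,-1)$ joining $(k+2,-1)$ to $(k,0)$, once diagonal invariance is applied to rewrite $\chi(k+2,-1)$ as $a_{k+3}$. Hence any four consecutive values use all four colors, the sequence $(a_k)$ has period $4$, and $a_k = a_{k'}$ if and only if $k \equiv k' \pmod 4$. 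Therefore $\chi(m,n) = \chi(0,0)$ exactly when $m \equiv n \pmod 4$.

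Finally, I would translate this congruence back into the statement of the lemma. Since $3 \equiv 2^3 \pmod 5$ and $2$ has order $4$ modulo $5$, we have $w_5(2^m 3^n) \equiv 2^{m+3n} \pmod 5$, which equals $1$ exactly when $m + 3n \equiv 0 \pmod 4$, that is, when $m \equiv n \pmod 4$; this matches the condition just derived. The main obstacle is the second step, the uniform diagonal-invariance argument: one must identify the correct triangle of neighboring points and verify, using the precise connection set $S$, that both endpoints of the diagonal are adjacent to all three of them. Once this translation invariance is secured, the reduction to a periodic one-dimensional sequence and the final passage to $w_5$ are routine.
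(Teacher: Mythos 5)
Your proof is correct; I verified the key adjacency claims (the triangle $(m+1,n)$, $(m,n+1)$, $(m+2,n)$ is a $3$-clique of $\Gamma(\mathbb{Z}^2,S)$, both $(m,n)$ and $(m+1,n+1)$ are adjacent to all three of its vertices, and the edge from $(k,0)$ to $(k+2,-1)$ together with diagonal invariance gives $a_k \ne a_{k+3}$), and the final translation $w_5(2^m3^n) \equiv 2^{m+3n} \pmod{5}$ is right. Your route shares its pivot with the paper's but is organized differently. The paper's first deduction is exactly your diagonal invariance in multiplicative form: $c(y)=c(6y)$ for all $y$, forced because $y$ and $6y$ each differ from all of $2y$, $3y$, $4y$ (your triangle, translated). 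From there the paper runs three more pigeonhole steps of the same kind --- $2y,4y,6y,8y$ distinct gives $c(8y)=c(3y)$; then $c(12y)=c(2y)$; then $c(16y)=c(y)$ --- and closes with an unspecified ``by induction,'' leaving implicit the bookkeeping that $6$ and $16$ generate the kernel of $w_5$ on the numbers $2^m3^n$ and that the four cosets carry the four distinct colors. You instead exploit the invariance uniformly: it collapses the lattice to the one-dimensional sequence $a_k = \chi(k,0)$, the inequality $a_k \ne a_{k+3}$ (equivalently $c(y)\ne c(8y)$, coming from $8 = 6\cdot\frac{4}{3}$) replaces two of the paper's pigeonhole steps, and a single further pigeonhole application yields period $4$, making both directions of the ``if and only if'' and the passage to the congruence $m \equiv n \pmod{4}$ completely explicit. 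What your version buys is a cleaner and more self-contained endgame; what the paper's buys is brevity, since it never needs to formalize the lattice picture. One small attribution slip, harmless to the argument: your opening $4$-clique $\{(0,0),(1,0),(0,1),(2,0)\}$ is not where the four-color hypothesis does its work, and in fact it is never used later; the hypothesis enters when you force ``the single remaining color'' in the diagonal step and again in the periodicity step, where pairwise distinctness of four consecutive $a_k$ is proved independently.
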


\begin{lemma} \label{lemma:-4}
For every nonzero rational number $x$, we have $c(-x)=c(4x)$.
\end{lemma}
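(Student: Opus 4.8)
The plan is to show that $c(-x)$ must differ from each of $c(x)$, $c(2x)$, and $c(3x)$; since Lemma~\ref{lemma:235-2} forces $c(x)$, $c(2x)$, $c(3x)$, $c(4x)$ to be four \emph{distinct} colors (the ratios among $1,2,3,4$ all have $w_5 \not\equiv 1 \pmod 5$), and since there are only four colors available, $c(-x)$ is one of these four and is thereby forced to equal $c(4x)$. So the whole lemma reduces to exhibiting, for each $\gamma \in \{c(x),c(2x),c(3x)\}$, a solution of $x_1+x_2+x_3=4x_4$ that would be monochromatic if $c(-x)=\gamma$.

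For $c(-x)\ne c(2x)$ I would use the solution $(x_1,x_2,x_3,x_4)=(-x,\frac{9}{2}x,\frac{9}{2}x,2x)$, noting $-x+\frac92 x+\frac92 x = 8x = 4(2x)$ and that $c(\frac92 x)=c(2x)$ by Lemma~\ref{lemma:235-2} (as $\frac94 = 2^{-2}3^2$ has $w_5 \equiv 1$); if $c(-x)=c(2x)$ all four entries share the color $c(2x)$. For $c(-x)\ne c(x)$ I would use $(x,x,-6x,-x)$, since $x+x-6x = -4x = 4(-x)$ and $c(-6x)=c(-x)$ by Lemma~\ref{lemma:235-2} ($w_5(6)\equiv 1$); assuming $c(-x)=c(x)$ makes all four entries the color $c(x)$. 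For $c(-x)\ne c(3x)$ I would use $(\frac12 x, -\frac94 x, -\frac94 x, -x)$, since $\frac12 x - \frac94 x - \frac94 x = -4x = 4(-x)$, together with $c(\frac12 x)=c(3x)$ (because $\frac12 = 2^{-1}3^{-1}\cdot 3$ with $w_5(2^{-1}3^{-1})\equiv 1$) and $c(-\frac94 x)=c(-x)$ (as $w_5(\frac94)\equiv 1$); assuming $c(-x)=c(3x)$ colors all four entries $c(3x)$. Each assumption thus produces a forbidden monochromatic solution.

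The main obstacle is locating the last two solutions. A $5$-adic reduction shows that a solution with a \emph{single} negative entry together with three same-color positive multiples of $x$ can only occur when that common color is $c(2x)$ (the entries all lie in one coset of the index-$4$ subgroup $\{2^m3^n : w_5 \equiv 1\}$, and the sign pattern forces the residue $t$ to satisfy $2t\equiv -1$, i.e. $t\equiv 2 \pmod 5$). Consequently the $c(x)$ and $c(3x)$ cases genuinely require solutions with \emph{two} negative entries, and the key idea is that the contradiction hypothesis $c(-x)=c(\gamma x)$, fed through Lemma~\ref{lemma:235-2}, propagates the color $\gamma$ across an entire coset of negative rationals (e.g. all $-\frac94 x$-type numbers), which is exactly what makes a two-negative monochromatic solution available. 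Pinning down the explicit rational coefficients so that the coset and sign constraints are simultaneously satisfied is the only real calculation, and the representatives $-6x$, $\frac12 x$, and $-\frac94 x$ above suffice.
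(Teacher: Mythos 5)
Your proof is correct: all three exhibited solutions satisfy $x_1+x_2+x_3=4x_4$, every $w_5$ computation you invoke from Lemma~\ref{lemma:235-2} checks out (e.g.\ $w_5(\frac94)\equiv w_5(6)\equiv w_5(\frac16)\equiv 1 \pmod 5$), and the concluding pigeonhole step --- $c(x),c(2x),c(3x),c(4x)$ are four distinct colors, so $c(-x)$ must be the remaining one, $c(4x)$ --- is exactly how the paper finishes as well. The skeleton (eliminate $c(x)$, $c(2x)$, $c(3x)$ as possible values of $c(-x)$) matches the paper's, and indeed your solution $(x,x,-6x,-x)$ for the $c(x)$ case is just the negation of the paper's $(-x,-x,6x,x)$, while your $(-x,\frac92x,\frac92x,2x)$ plays the role of the paper's $(-x,2x,2x,\frac34x)$. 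Where you genuinely diverge is the hardest elimination, $c(-x)\ne c(3x)$: the paper notes that $c(5x),c(10x),c(15x),c(20x)$ are four distinct colors, so $c(3x)$ must coincide with one of them, and then runs a four-way case analysis with the tailored solutions $(5x,-x,8x,3x)$, $(10x,-x,3x,3x)$, $(15x,-x,18x,8x)$, $(20x,-x,8x,\frac{27}{4}x)$, each containing only one negative entry but anchored at a multiple of $5$. You instead use the single two-negative solution $(\frac12x,-\frac94x,-\frac94x,-x)$, exploiting that the contradiction hypothesis $c(-x)=c(3x)$ propagates along the $w_5\equiv 1$ coset of negative numbers; your $5$-adic sign computation correctly identifies why this is necessary if one stays inside the group generated by $2$ and $3$ (single-negative solutions there can only rule out the residue $t\equiv 2$, i.e.\ the $c(2x)$ case). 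Your route is shorter and avoids casework entirely; the paper's route, in exchange, introduces the device of covering all four colors by $\{c(5x),c(10x),c(15x),c(20x)\}$, which it then reuses heavily in the proof of Lemma~\ref{lemma:mod5}, so its casework doubles as a warm-up for the subsequent induction.
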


To finish the proof of Proposition~\ref{proposition10}, it suffices by
Lemma~\ref{lemma:-4} to prove the following lemma.

\begin{lemma} \label{lemma:mod5}
If $x$ is a nonzero rational number and $n$ is a positive integer
satisfying $v_5(n)=0$ and $n \equiv d \pmod{5}$ with $d\in \{1,2,3,4\}$,
then $c(nx)=c(dx)$.
\end{lemma}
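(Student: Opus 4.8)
The plan is to prove the statement by strong induction on \(n\). The structural facts I need are already in place: by Lemmas~\ref{lemma:forb1}--\ref{lemma:forb3} the numbers \(x,2x,3x,4x\) receive four distinct colors and hence exhaust all four colors, while Lemma~\ref{lemma:235-2} and Lemma~\ref{lemma:-4} show that the color of \(qx\) depends only on \(w_5(q)\) whenever \(q\) lies in the group generated by \(-1,2,3\). Consequently \(c(nx)\) must be one of \(c(dx),c(2dx),c(3dx),c(4dx)\), and the task reduces to excluding the three wrong ones. First I would dispose of composite \(n\): if \(n=ab\) with \(1<a,b<n\), then applying the induction hypothesis for \(a\) at the base \(bx\) and then for \(b\) at the base \(\bar a x\) (and collapsing the resulting \(\langle2,3\rangle\)-factor via Lemma~\ref{lemma:235-2}) gives \(c(nx)=c(\overline{ab}\,x)=c(dx)\). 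So it suffices to treat \(n=p\) prime, where the induction hypothesis supplies \(c(mx)=c(\bar m x)\) for every \(m<p\) and every \(x\).

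For the prime case the idea is to forbid a wrong color \(c(jd\,x)\) by exhibiting a solution of \(x_1+x_2+x_3=4x_4\) in which \(px\) occupies one or more slots and the remaining slots are filled by numbers already known to have color \(c(jd\,x)\); under the assumption \(c(px)=c(jd\,x)\) the whole solution is then monochromatic, a contradiction. These helper terms I would build from smaller multiples (induction hypothesis), powers of \(2\) and \(3\) (Lemma~\ref{lemma:235-2}), and sign changes (Lemma~\ref{lemma:-4}), choosing the coefficients so that the residues balance modulo \(5\). A short residue computation — resting on the fact that no nonempty sub-sum of the coefficient vector \((1,1,1,-4)\) vanishes modulo \(5\) — pins down exactly which wrong color each configuration can exclude, and two of the three alternatives are eliminated this way.

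The hard part will be the remaining alternative. For it, every candidate monochromatic solution supported on the single \(5\)-adic level of \(x\) is forced to degenerate: the residue constraints collapse all the would-be helper slots into \(p\)-multiples, so no genuine obstruction survives. To break this I would move across two consecutive \(5\)-adic levels by inserting a term divisible by \(5\); such a term contributes \(0\) modulo \(5\) and so relaxes the residue balance that was blocking the argument, while its color is controlled by invoking the induction hypothesis at the base \(5x\) together with Lemma~\ref{lemma:235-2} on that level. The delicate point is to arrange these \(5\)-divisible auxiliary terms so that the equation balances simultaneously modulo \(5\) and in \(5\)-adic valuation while their colors are genuinely forced; equivalently, one may run the whole argument as a least counterexample, which the first paragraph shows must be a prime, and then produce the contradicting monochromatic solution. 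Once all three wrong colors are excluded the induction closes with \(c(nx)=c(dx)\).
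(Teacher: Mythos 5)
Your overall architecture matches the paper's: strong induction, reduction of composite $n$ (the paper instead inducts over all integers coprime to $30$ after dispatching $2^m3^n$ via Lemma~\ref{lemma:235-2}, but your reduction to primes is equally valid), and exclusion of the wrong colors for $c(px)$ by exhibiting solutions of $x_1+x_2+x_3=4x_4$ whose remaining slots are filled with smaller multiples, powers of $2$ and $3$, and sign changes. You also diagnose the obstruction correctly: for exactly one of the three wrong colors, every candidate solution supported on the $5$-adic level of $x$ degenerates, the helper slots being forced to be $p$-multiples.

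However, your resolution of that hard case has a genuine gap. You claim the color of a $5$-divisible auxiliary term is ``controlled by invoking the induction hypothesis at the base $5x$ together with Lemma~\ref{lemma:235-2} on that level.'' But those tools only relate colors \emph{within} the level of $5x$ (e.g.\ $c(5nx)=c(5dx)$); they say nothing about how $\{c(5x),c(10x),c(15x),c(20x)\}$ matches up with $\{c(x),c(2x),c(3x),c(4x)\}$. Indeed no argument can force that correspondence: the minimal colorings $c_{\Pi_5}$ apply an arbitrary, independent permutation at each $5$-adic level, which is precisely why there are $2^{\aleph_0}$ of them. So no single solution containing one $5$-divisible term can produce the contradiction, and your fallback (``run the whole argument as a least counterexample \dots\ and then produce the contradicting monochromatic solution'') merely restates the goal. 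The missing idea, used by the paper in every case, is a covering argument: since $c(5x),c(10x),c(15x),c(20x)$ are pairwise distinct by Lemma~\ref{lemma:235-2}, they exhaust all four colors, so the bad color equals $c(5kx)$ for \emph{some} unknown $k\in\{1,2,3,4\}$; one then writes down \emph{four} solutions, one for each $k$ --- e.g.\ for $p=10a+1$ and bad color $c(2x)$ the paper uses $(5x,12x,px,(\frac{5a+9}{2})x)$, $(10x,7x,px,(\frac{5a+9}{2})x)$, $(15x,2x,px,(\frac{5a+9}{2})x)$, and $(20x,-3x,px,(\frac{5a+9}{2})x)$ --- in which every slot other than the level-one term is already known to carry the bad color. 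Whichever $k$ is the truth, the corresponding solution becomes monochromatic, a contradiction. Without this device (or an equivalent one) your induction does not close.
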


\begin{proof}[Proof of Lemma~\ref{lemma:forb1}.]
Since $(\frac{4}{3}x,\frac{4}{3}x,\frac{4}{3}x,x)$,
$(\frac{3}{2}x,\frac{3}{2}x,x,x)$, and $(2x,x,x,x)$ are solutions to
$x_1+x_2+x_3=4x_4$, then $c(x)\ne c(rx)$ for $r \in
\{\frac{4}{3},\frac{3}{2},2\}$.
\end{proof}

\begin{proof}[Proof of Lemma~\ref{lemma:forb2}.]
  We suppose for contradiction that there is a nonzero rational number
  $x$ such that $c(x)=c(3x)$. Without loss of generality, we may take
  $x=1$. The previous lemmas imply that the ratios $2$, $\frac 32$, and
  $\frac 43$ are \emph{forbidden ratios}, that is, $c(x) \ne c(rx)$ for
  $r\in \{2, \frac 32, \frac 43\}$. Using these forbidden ratios, we see
  that $c(1)=c(3)$, $c(2)$, and $c(4)$ must be different colors.

  \newcommand{\siz}{\footnotesize }
  \newcommand{\tab}{\hspace{1em}}
  \newcommand{\myfrac}[2]{\ensuremath{\frac{#1}{#2}}}
  \renewcommand{\arraystretch}{0.7}
  \begin{table}
    \begin{tabular}{|l|r@{}c@{}l|l|l|l|l|} \hline \siz
      Assumptions &\multicolumn{3}{|l|}{\siz Claim} &\siz  Why not 0 &\siz  Why not 1 &\siz  Why not 2 &\siz  Why not 3 \\\hline \siz
      $c(1)=c(3)=0, c(2) = 1, c(4) = 2$ &\siz $c(6)$&\siz ${}\in{}$&\siz $\{1,3\}$&\siz $3\cdot2 = 6$&\siz &\siz $4\cdot\myfrac{3}{2} = 6$&\siz \\ \hline \siz
      $c(6) = 1$&\siz $c(8)$&\siz ${}={}$&\siz $3$&\siz $(3,1,8,3)$&\siz $6\cdot\myfrac{4}{3} = 8$&\siz $4\cdot2 = 8$&\siz \\ \hline \siz
      \tab $c(8) = 3$&\siz $c(12)$&\siz ${}\in{}$&\siz $\{0,2\}$&\siz &\siz $6\cdot2 = 12$&\siz &\siz $8\cdot\myfrac{3}{2} = 12$\\ \hline \siz
      \tab\tab $c(12) = 0$&\siz $c(16)$&\siz ${}={}$&\siz $2$&\siz $12\cdot\myfrac{4}{3} = 16$&\siz $(6,2,16,6)$&\siz &\siz $8\cdot2 = 16$\\ \hline \siz
      \tab\tab\tab $c(16) = 2$&\siz $c(9)$&\siz ${}={}$&\siz $3$&\siz $12\cdot\myfrac{3}{4} = 9$&\siz $6\cdot\myfrac{3}{2} = 9$&\siz $(16,16,4,9)$&\siz \\ \hline \siz
      \tab\tab\tab\tab $c(9) = 3$&\siz $c(\myfrac{9}{2})$&\siz ${}={}$&\siz $2$&\siz $3\cdot\myfrac{3}{2} = \myfrac{9}{2}$&\siz $6\cdot\myfrac{3}{4} = \myfrac{9}{2}$&\siz &\siz $9\cdot\myfrac{1}{2} = \myfrac{9}{2}$\\ \hline \siz
      \tab\tab\tab\tab\tab $c(\myfrac{9}{2}) = 2$&\siz $c(10)$&\siz ${}\in{}$&\siz $\{1,3\}$&\siz $(1,1,10,3)$&\siz &\siz $(4,4,10,\myfrac{9}{2})$&\siz \\ \hline \siz
      \tab\tab\tab\tab\tab\tab $c(10) = 1$&\siz $c(24)$&\siz ${}={}$&\siz $3$&\siz $12\cdot2 = 24$&\siz $(10,6,24,10)$&\siz $16\cdot\myfrac{3}{2} = 24$&\siz \\ \hline \siz
      \tab\tab\tab\tab\tab\tab\tab $c(24) = 3$&\siz $c(32)$&\siz ${}={}$&\siz $0$&\siz &\siz $(6,2,32,10)$&\siz $16\cdot2 = 32$&\siz $24\cdot\myfrac{4}{3} = 32$\\ \hline \siz
      \tab\tab\tab\tab\tab\tab\tab\tab $c(32) = 0$&\siz $c(11)$&\siz ${}={}$&\siz $2$&\siz $(32,1,11,11)$&\siz $(2,11,11,6)$&\siz &\siz $(24,9,11,11)$\\ \hline \siz
      \tab\tab\tab\tab\tab\tab\tab\tab\tab $c(11) = 2$&\siz $c(5)$&\siz ${}={}$&\siz $3$&\siz $(12,3,5,5)$&\siz $10\cdot\myfrac{1}{2} = 5$&\siz $(11,\myfrac{9}{2},\myfrac{9}{2},5)$&\siz \\ \hline \siz
      \tab\tab\tab\tab\tab\tab\tab\tab\tab\tab $c(5) = 3$&\siz $c(\myfrac{11}{2})$&\siz !?&\siz $ $&\siz $(1,\myfrac{11}{2},\myfrac{11}{2},3)$&\siz $(10,10,2,\myfrac{11}{2})$&\siz $11\cdot\myfrac{1}{2} = \myfrac{11}{2}$&\siz $(5,9,8,\myfrac{11}{2})$\\ \hline \siz
      \tab\tab\tab\tab\tab\tab $c(10) = 3$&\siz $c(24)$&\siz ${}={}$&\siz $1$&\siz $12\cdot2 = 24$&\siz &\siz $16\cdot\myfrac{3}{2} = 24$&\siz $(8,8,24,10)$\\ \hline \siz
      \tab\tab\tab\tab\tab\tab\tab $c(24) = 1$&\siz $c(7)$&\siz ${}={}$&\siz $0$&\siz &\siz $(24,2,2,7)$&\siz $(\myfrac{9}{2},\myfrac{9}{2},7,4)$&\siz $(10,10,8,7)$\\ \hline \siz
      \tab\tab\tab\tab\tab\tab\tab\tab $c(7) = 0$&\siz $c(18)$&\siz ${}={}$&\siz $2$&\siz $12\cdot\myfrac{3}{2} = 18$&\siz $24\cdot\myfrac{3}{4} = 18$&\siz &\siz $9\cdot2 = 18$\\ \hline \siz
      \tab\tab\tab\tab\tab\tab\tab\tab\tab $c(18) = 2$&\siz $c(\myfrac{27}{2})$&\siz !?&\siz $ $&\siz $(1,\myfrac{27}{2},\myfrac{27}{2},7)$&\siz $(24,24,6,\myfrac{27}{2})$&\siz $18\cdot\myfrac{3}{4} = \myfrac{27}{2}$&\siz $9\cdot\myfrac{3}{2} = \myfrac{27}{2}$\\ \hline \siz
      \tab\tab $c(12) = 2$&\siz $c(16)$&\siz ${}={}$&\siz $0$&\siz &\siz $(6,2,16,6)$&\siz $12\cdot\myfrac{4}{3} = 16$&\siz $8\cdot2 = 16$\\ \hline \siz
      \tab\tab\tab $c(16) = 0$&\siz $c(5)$&\siz ${}\in{}$&\siz $\{1,3\}$&\siz $(16,3,1,5)$&\siz &\siz $(12,4,4,5)$&\siz \\ \hline \siz
      \tab\tab\tab\tab $c(5) = 1$&\siz $c(7)$&\siz ${}\in{}$&\siz $\{0,3\}$&\siz &\siz $(6,7,7,5)$&\siz $(12,12,4,7)$&\siz \\ \hline \siz
      \tab\tab\tab\tab\tab $c(7) = 0$&\siz $c(9)$&\siz ${}={}$&\siz $3$&\siz $(16,3,9,7)$&\siz $6\cdot\myfrac{3}{2} = 9$&\siz $12\cdot\myfrac{3}{4} = 9$&\siz \\ \hline \siz
      \tab\tab\tab\tab\tab\tab $c(9) = 3$&\siz $c(14)$&\siz ${}={}$&\siz $2$&\siz $7\cdot2 = 14$&\siz $(5,5,14,6)$&\siz &\siz $(9,9,14,8)$\\ \hline \siz
      \tab\tab\tab\tab\tab\tab\tab $c(14) = 2$&\siz $c(20)$&\siz !?&\siz $ $&\siz $(7,1,20,7)$&\siz $(2,2,20,6)$&\siz $(14,14,20,12)$&\siz $(8,8,20,9)$\\ \hline \siz
      \tab\tab\tab\tab\tab $c(7) = 3$&\siz $c(10)$&\siz ${}={}$&\siz $2$&\siz $(1,1,10,3)$&\siz $5\cdot2 = 10$&\siz &\siz $(8,10,10,7)$\\ \hline \siz
      \tab\tab\tab\tab\tab\tab $c(10) = 2$&\siz $c(14)$&\siz ${}={}$&\siz $0$&\siz &\siz $(5,5,14,6)$&\siz $(12,14,14,10)$&\siz $7\cdot2 = 14$\\ \hline \siz
      \tab\tab\tab\tab\tab\tab\tab $c(14) = 0$&\siz $c(20)$&\siz ${}={}$&\siz $3$&\siz $(16,20,20,14)$&\siz $(2,2,20,6)$&\siz $10\cdot2 = 20$&\siz \\ \hline \siz
      \tab\tab\tab\tab\tab\tab\tab\tab $c(20) = 3$&\siz $c(9)$&\siz ${}={}$&\siz $0$&\siz &\siz $6\cdot\myfrac{3}{2} = 9$&\siz $12\cdot\myfrac{3}{4} = 9$&\siz $(20,7,9,9)$\\ \hline \siz
      \tab\tab\tab\tab\tab\tab\tab\tab\tab $c(9) = 0$&\siz $c(13)$&\siz ${}={}$&\siz $2$&\siz $(9,14,13,9)$&\siz $(5,6,13,6)$&\siz &\siz $(7,8,13,7)$\\ \hline \siz
      \tab\tab\tab\tab\tab\tab\tab\tab\tab\tab $c(13) = 2$&\siz $c(17)$&\siz !?&\siz $ $&\siz $(16,3,17,9)$&\siz $(5,2,17,6)$&\siz $(13,10,17,10)$&\siz $(7,8,17,8)$\\ \hline \siz
      \tab\tab\tab\tab $c(5) = 3$&\siz $c(\myfrac{9}{2})$&\siz ${}={}$&\siz $2$&\siz $3\cdot\myfrac{3}{2} = \myfrac{9}{2}$&\siz $6\cdot\myfrac{3}{4} = \myfrac{9}{2}$&\siz &\siz $(5,5,8,\myfrac{9}{2})$\\ \hline \siz
      \tab\tab\tab\tab\tab $c(\myfrac{9}{2}) = 2$&\siz $c(10)$&\siz ${}={}$&\siz $1$&\siz $(1,1,10,3)$&\siz &\siz $(4,4,10,\myfrac{9}{2})$&\siz $5\cdot2 = 10$\\ \hline \siz
      \tab\tab\tab\tab\tab\tab $c(10) = 1$&\siz $c(7)$&\siz ${}={}$&\siz $0$&\siz &\siz $(10,7,7,6)$&\siz $(\myfrac{9}{2},\myfrac{9}{2},7,4)$&\siz $(5,8,7,5)$\\ \hline \siz
      \tab\tab\tab\tab\tab\tab\tab $c(7) = 0$&\siz $c(\myfrac{15}{2})$&\siz !?&\siz $ $&\siz $(7,7,16,\myfrac{15}{2})$&\siz $10\cdot\myfrac{3}{4} = \myfrac{15}{2}$&\siz $(\myfrac{9}{2},4,\myfrac{15}{2},4)$&\siz $5\cdot\myfrac{3}{2} = \myfrac{15}{2}$\\ \hline \siz
      $c(6) = 3$&\siz $c(8)$&\siz ${}={}$&\siz $1$&\siz $(3,1,8,3)$&\siz &\siz $4\cdot2 = 8$&\siz $6\cdot\myfrac{4}{3} = 8$\\ \hline \siz
      \tab $c(8) = 1$&\siz $c(\myfrac{9}{2})$&\siz ${}={}$&\siz $2$&\siz $3\cdot\myfrac{3}{2} = \myfrac{9}{2}$&\siz $(8,8,2,\myfrac{9}{2})$&\siz &\siz $6\cdot\myfrac{3}{4} = \myfrac{9}{2}$\\ \hline \siz
      \tab\tab $c(\myfrac{9}{2}) = 2$&\siz $c(9)$&\siz ${}\in{}$&\siz $\{0,1\}$&\siz &\siz &\siz $\myfrac{9}{2}\cdot2 = 9$&\siz $6\cdot\myfrac{3}{2} = 9$\\ \hline \siz
      \tab\tab\tab $c(9) = 0$&\siz $c(12)$&\siz ${}={}$&\siz $2$&\siz $9\cdot\myfrac{4}{3} = 12$&\siz $8\cdot\myfrac{3}{2} = 12$&\siz &\siz $6\cdot2 = 12$\\ \hline \siz
      \tab\tab\tab\tab $c(12) = 2$&\siz $c(\myfrac{3}{2})$&\siz ${}={}$&\siz $3$&\siz $3\cdot\myfrac{1}{2} = \myfrac{3}{2}$&\siz $2\cdot\myfrac{3}{4} = \myfrac{3}{2}$&\siz $(12,\myfrac{9}{2},\myfrac{3}{2},\myfrac{9}{2})$&\siz \\ \hline \siz
      \tab\tab\tab\tab\tab $c(\myfrac{3}{2}) = 3$&\siz $c(\myfrac{9}{4})$&\siz ${}={}$&\siz $1$&\siz $3\cdot\myfrac{3}{4} = \myfrac{9}{4}$&\siz &\siz $\myfrac{9}{2}\cdot\myfrac{1}{2} = \myfrac{9}{4}$&\siz $\myfrac{3}{2}\cdot\myfrac{3}{2} = \myfrac{9}{4}$\\ \hline \siz
      \tab\tab\tab\tab\tab\tab $c(\myfrac{9}{4}) = 1$&\siz $c(\myfrac{27}{8})$&\siz ${}={}$&\siz $0$&\siz &\siz $\myfrac{9}{4}\cdot\myfrac{3}{2} = \myfrac{27}{8}$&\siz $\myfrac{9}{2}\cdot\myfrac{3}{4} = \myfrac{27}{8}$&\siz $(\myfrac{3}{2},6,6,\myfrac{27}{8})$\\ \hline \siz
      \tab\tab\tab\tab\tab\tab\tab $c(\myfrac{27}{8}) = 0$&\siz $c(\myfrac{9}{8})$&\siz ${}={}$&\siz $2$&\siz $(\myfrac{27}{8},9,\myfrac{9}{8},\myfrac{27}{8})$&\siz $\myfrac{9}{4}\cdot\myfrac{1}{2} = \myfrac{9}{8}$&\siz &\siz $\myfrac{3}{2}\cdot\myfrac{3}{4} = \myfrac{9}{8}$\\ \hline \siz
      \tab\tab\tab\tab\tab\tab\tab\tab $c(\myfrac{9}{8}) = 2$&\siz $c(\myfrac{3}{4})$&\siz ${}={}$&\siz $1$&\siz $1\cdot\myfrac{3}{4} = \myfrac{3}{4}$&\siz &\siz $\myfrac{9}{8}\cdot\myfrac{2}{3} = \myfrac{3}{4}$&\siz $\myfrac{3}{2}\cdot\myfrac{1}{2} = \myfrac{3}{4}$\\ \hline \siz
      \tab\tab\tab\tab\tab\tab\tab\tab\tab $c(\myfrac{3}{4}) = 1$&\siz $c(\myfrac{15}{2})$&\siz ${}={}$&\siz $3$&\siz $(3,3,\myfrac{15}{2},\myfrac{27}{8})$&\siz $(\myfrac{3}{4},\myfrac{3}{4},\myfrac{15}{2},\myfrac{9}{4})$&\siz $(\myfrac{9}{2},4,\myfrac{15}{2},4)$&\siz \\ \hline \siz
      \tab\tab\tab\tab\tab\tab\tab\tab\tab\tab $c(\myfrac{15}{2}) = 3$&\siz $c(\myfrac{21}{4})$&\siz !?&\siz $ $&\siz $(\myfrac{27}{8},\myfrac{27}{8},\myfrac{21}{4},3)$&\siz $(\myfrac{3}{4},2,\myfrac{21}{4},2)$&\siz $(12,\myfrac{9}{2},\myfrac{9}{2},\myfrac{21}{4})$&\siz $(\myfrac{15}{2},\myfrac{15}{2},6,\myfrac{21}{4})$\\ \hline \siz
      \tab\tab\tab $c(9) = 1$&\siz $c(10)$&\siz ${}\in{}$&\siz $\{1,3\}$&\siz $(1,1,10,3)$&\siz &\siz $(4,4,10,\myfrac{9}{2})$&\siz \\ \hline \siz
      \tab\tab\tab\tab $c(10) = 1$&\siz $c(7)$&\siz ${}\in{}$&\siz $\{0,3\}$&\siz &\siz $(10,10,8,7)$&\siz $(\myfrac{9}{2},\myfrac{9}{2},7,4)$&\siz \\ \hline \siz
      \tab\tab\tab\tab\tab $c(7) = 0$&\siz $c(\myfrac{8}{3})$&\siz ${}={}$&\siz $3$&\siz $(7,1,\myfrac{8}{3},\myfrac{8}{3})$&\siz $2\cdot\myfrac{4}{3} = \myfrac{8}{3}$&\siz $4\cdot\myfrac{2}{3} = \myfrac{8}{3}$&\siz \\ \hline \siz
      \tab\tab\tab\tab\tab\tab $c(\myfrac{8}{3}) = 3$&\siz $c(\myfrac{4}{3})$&\siz ${}={}$&\siz $2$&\siz $1\cdot\myfrac{4}{3} = \myfrac{4}{3}$&\siz $2\cdot\myfrac{2}{3} = \myfrac{4}{3}$&\siz &\siz $\myfrac{8}{3}\cdot\myfrac{1}{2} = \myfrac{4}{3}$\\ \hline \siz
      \tab\tab\tab\tab\tab\tab\tab $c(\myfrac{4}{3}) = 2$&\siz $c(\myfrac{16}{3})$&\siz ${}={}$&\siz $0$&\siz &\siz $8\cdot\myfrac{2}{3} = \myfrac{16}{3}$&\siz $4\cdot\myfrac{4}{3} = \myfrac{16}{3}$&\siz $\myfrac{8}{3}\cdot2 = \myfrac{16}{3}$\\ \hline \siz
      \tab\tab\tab\tab\tab\tab\tab\tab $c(\myfrac{16}{3}) = 0$&\siz $c(\myfrac{32}{3})$&\siz !?&\siz $ $&\siz $\myfrac{16}{3}\cdot2 = \myfrac{32}{3}$&\siz $8\cdot\myfrac{4}{3} = \myfrac{32}{3}$&\siz $(\myfrac{4}{3},4,\myfrac{32}{3},4)$&\siz $(\myfrac{8}{3},\myfrac{32}{3},\myfrac{32}{3},6)$\\ \hline \siz
      \tab\tab\tab\tab\tab $c(7) = 3$&\siz $c(5)$&\siz ${}\in{}$&\siz $\{0,2\}$&\siz &\siz $10\cdot\myfrac{1}{2} = 5$&\siz &\siz $(7,7,6,5)$\\ \hline \siz
      \tab\tab\tab\tab\tab\tab $c(5) = 0$&\siz $c(12)$&\siz ${}={}$&\siz $2$&\siz $(5,3,12,5)$&\siz $9\cdot\myfrac{4}{3} = 12$&\siz &\siz $6\cdot2 = 12$\\ \hline \siz
      \tab\tab\tab\tab\tab\tab\tab $c(12) = 2$&\siz $c(16)$&\siz !?&\siz $ $&\siz $(3,1,16,5)$&\siz $8\cdot2 = 16$&\siz $12\cdot\myfrac{4}{3} = 16$&\siz $(6,6,16,7)$\\ \hline \siz
      \tab\tab\tab\tab\tab\tab $c(5) = 2$&\siz $c(12)$&\siz ${}={}$&\siz $0$&\siz &\siz $9\cdot\myfrac{4}{3} = 12$&\siz $(4,4,12,5)$&\siz $6\cdot2 = 12$\\ \hline \siz
      \tab\tab\tab\tab\tab\tab\tab $c(12) = 0$&\siz $c(\myfrac{7}{2})$&\siz !?&\siz $ $&\siz $(12,1,1,\myfrac{7}{2})$&\siz $(10,2,2,\myfrac{7}{2})$&\siz $(5,5,4,\myfrac{7}{2})$&\siz $7\cdot\myfrac{1}{2} = \myfrac{7}{2}$\\ \hline \siz
      \tab\tab\tab\tab $c(10) = 3$&\siz $c(\myfrac{16}{3})$&\siz ${}={}$&\siz $0$&\siz &\siz $8\cdot\myfrac{2}{3} = \myfrac{16}{3}$&\siz $4\cdot\myfrac{4}{3} = \myfrac{16}{3}$&\siz $(10,6,\myfrac{16}{3},\myfrac{16}{3})$\\ \hline \siz
      \tab\tab\tab\tab\tab $c(\myfrac{16}{3}) = 0$&\siz $c(\myfrac{8}{3})$&\siz ${}={}$&\siz $3$&\siz $\myfrac{16}{3}\cdot\myfrac{1}{2} = \myfrac{8}{3}$&\siz $2\cdot\myfrac{4}{3} = \myfrac{8}{3}$&\siz $4\cdot\myfrac{2}{3} = \myfrac{8}{3}$&\siz \\ \hline \siz
      \tab\tab\tab\tab\tab\tab $c(\myfrac{8}{3}) = 3$&\siz $c(\myfrac{4}{3})$&\siz ${}={}$&\siz $2$&\siz $1\cdot\myfrac{4}{3} = \myfrac{4}{3}$&\siz $2\cdot\myfrac{2}{3} = \myfrac{4}{3}$&\siz &\siz $\myfrac{8}{3}\cdot\myfrac{1}{2} = \myfrac{4}{3}$\\ \hline \siz
      \tab\tab\tab\tab\tab\tab\tab $c(\myfrac{4}{3}) = 2$&\siz $c(\myfrac{32}{3})$&\siz !?&\siz $ $&\siz $\myfrac{16}{3}\cdot2 = \myfrac{32}{3}$&\siz $8\cdot\myfrac{4}{3} = \myfrac{32}{3}$&\siz $(\myfrac{4}{3},4,\myfrac{32}{3},4)$&\siz $(\myfrac{8}{3},\myfrac{32}{3},\myfrac{32}{3},6)$\\ \hline
    \end{tabular}
    \caption{The computer-generated proof of Lemma~\ref{lemma:forb2}.}
    \label{table:proof}
  \end{table}
  \normalsize

  We present a computer-generated proof of the rest of this lemma in
  Table~\ref{table:proof}.  Without loss of generality, assume that our
  set of colors is $\{0,1,2,3\}$ and that $c(1)=c(3)=0$, $c(2)=1$, and
  $c(4)=2$; we must derive a contradiction.

  We describe briefly how to read Table~\ref{table:proof}.  In the
  left-most column, the assumptions that we make are listed.  They are
  structured in a tree-like fashion, reflecting the trial-and-error
  argument.  The next column lists the current claim.  If this claim is
  ``$c(x)$!?'', then this means that $c(x)$ cannot be colored with any
  color and a contradiction has been obtained (thus we must backtrack);
  if the claim is $c(x) = a$, then this will be the next assumption.  If
  the claim is $c(x) \in \{a,b\}$, then we must consider the cases $c(x)
  = a$ and $c(x) = b$ separately.  Finally, the last four columns
  describe why $c(x) \ne 0$, \emph{etc.}, if this is needed to support
  the claim. An equation of the form $y\cdot r = x$ means that the
  forbidden ratio $r$ forbids $x$ and $y$ from being the same color; a
  $4$-tuple $(x_1, x_2, x_3, x_4)$ means that if we colored $x$ the
  color in question, then this would be a monochromatic solution to $x_1
  + x_2 + x_3 = 4 x_4$.

  For example, the first line of the proof can be read as follows:
  ``$c(6)$ is either $1$ or $3$ because of forbidden ratios from $3$ and
  $4$; we consider these possibilities separately.''
\end{proof}

\begin{proof}[Proof of Lemma~\ref{lemma:forb3}.]
  For the sake of contradiction, assume that $c(x)=c(4x)$ for some $x$.
  As in the previous lemma, we will use forbidden ratios; here, they are
  $2$, $\frac 32$, $\frac 43$, and $3$.  To begin, note that $c(x) =
  c(4x)$, $c(2x)$, $c(3x)$, and $c(6x)$ must be different colors by
  these forbidden ratios.  Now, $c(\frac 32x) = c(6x)$ by forbidden
  ratios from $x$, $2x$, and $3x$.  It follows that $c(\frac 94x) =
  c(2x)$ by forbidden ratios from $3x$ and $\frac 32x$ as well as the
  solution $(4x,4x,x,\frac 94x)$.  Then $c(\frac 92x) = c(x)$ by
  forbidden ratios from $\frac 94x$, $3x$, and $\frac 32x$.  Next,
  $c(9x) = c(2x)$ by forbidden ratios from $\frac 92x$, $3x$, and $6x$.
  Further, $c(12x)=c(3x)$ by forbidden ratios from $4x$, $9x$, and $6x$.
  Also, $c(8x) = c(2x)$ by forbidden ratios from $4x$, $12x$, and $6x$.
  Nearing the end, $c(\frac 12x) = c(3x)$ by forbidden ratios from $x$
  and $\frac 32x$ as well as the solution $(8x,\frac 12x,\frac 12x,
  \frac94x)$.  Finally, there are no possibilities left for $c(\frac
  34x)$ by forbidden ratios from $x$, $\frac 94x$, $\frac 12x$, and
  $\frac 32x$.
\end{proof}

\begin{proof}[Proof of Lemma~\ref{lemma:235-2}.]
By induction, it suffices to prove that $c(x)=c(6x)=c(16x)$ for every
nonzero rational number $x$. We have $x$, $2x$, $3x$, and $4x$ are all
different colors by the previous three lemmas. Also, $2x$, $3x$, $4x$,
and $6x$ are all different colors by Lemma~\ref{lemma:forb1} and
Lemma~\ref{lemma:forb2}. Hence, $c(x)=c(6x)$. By the previous three
lemmas, the numbers $2x$, $4x$, $6x$, and $8x$ are all different
colors. Hence, $c(8x)=c(3x)$. By the previous three lemmas, $3x$, $4x$,
$6x$, and $12x$ are all different colors, so $c(12x)=c(2x)$. By the
previous three lemmas, $4x$, $8x$, $12x$, and $16x$ are all different
colors, so $c(16x)=c(x)$, completing the proof.
\end{proof}

\begin{proof}[Proof of Lemma~\ref{lemma:-4}.]
Since $c(x)=c(6x)$ and $(-x,-x,6x,x)$ is a solution to
$x_1+x_2+x_3=4x_4$, then $-x$ and $x$ are different colors. Since
$c(2x)=c(\frac{3}{4}x)$ and $(-x,2x,2x,\frac{3}{4}x)$ is a solution to
$x_1+x_2+x_3=4x_4$, then $-x$ and $2x$ are different colors.

By Lemma~\ref{lemma:235-2}, $5x$, $10x$, $15x$, and $20x$ are all
different colors, so $c(3x) \in \{5x,\penalty0 10x,\penalty0
15x,\penalty0 20x\}$. By Lemma~\ref{lemma:235-2}, we have
$c(3x)=c(8x)=c(18x)=c(\frac{27}{4}x)$. Since $(5x,-x,8x,3x)$,
$(10x,-x,3x,3x)$, $(15x,-x,18x,8x)$, and $(20x,-x,8x,\frac{27}{4}x)$ are
solutions to $x_1+x_2+x_3=4x_4$, then $-x$ and $3x$ are different
colors. Hence, $c(-x)=c(4x)$.
\end{proof}

\begin{proof}[Proof of Lemma~\ref{lemma:mod5}.]
By Lemma~\ref{lemma:235-2}, for integers $m$ and $n$ and nonzero
rational number $x$, we have $c(2^{m}3^{n}x)=c(x)$ if and only if
$w_5(2^m3^n)\equiv 1 \pmod{5}$. By Lemma~\ref{lemma:-4}, we have
$c(-x)=c(4x)$ for all nonzero rational numbers $x$. By induction, it
suffices to prove for every positive integer $p$ that is not a multiple
of $5$, we have, $c(px)=c(dx)$ for $d \in \{1,2,3,4\}$ if and only if $p
\equiv d \pmod{5}$. By Lemma~\ref{lemma:iff3multiple2}, we have already
established this for $p=2^m3^n$ and $n$ and $m$ are integers.

The proof is by induction on the size of $p$. Suppose $p>5$ is an
integer that is not a multiple of $2$, $3$, or $5$. We write $p=10a+b$,
where $a$ and $b$ are nonnegative integers and $b \in \{1,3,7,9\}$. The
induction hypothesis is that, for $d \in \{1,2,3,4\}$, we have
$c(p'q)=c(dq)$ for every nonzero rational number $q$ and positive
integer $p'$ such that $p' \equiv d \pmod{5}$, $p'<p$, and $p'$ is not a
multiple of $2$, $3$, or $5$.

By Lemma~\ref{lemma:235-2}, the four colors $c(5x), c(10x), c(15x),
c(20x)$ are distinct for every nonzero rational number $x$. Hence,
$c(x), c(2x), c(3x), c(4x) \in \{c(5x), c(10x), c(15x), c(20x)\}$ for
every nonzero rational number $x$.

\textbf{Case 1:} $p=10a+1$ with $a \geq 1$.

We have $c(3x)=c(8x)$.  Since $0<5a+6<p$, then by the induction
hypothesis, we have $c(3x)=c((\frac{5a+6}{2})x)$. Since
$(px,3x,8x,(\frac{5a+6}{2})x)$ is a solution to $x_1+x_2+x_3=4x_4$, then
$px$ and $3x$ are different colors.

We have $c(14x)=c(4x)$. Since $0<5a+4<p$, then by the induction
hypothesis, we have $c(4x)=c((5a+4)x)$. Since $(px,px,14x,(5a+4)x)$ is a
solution to $x_1+x_2+x_3=4x_4$, then $px$ and $4x$ are different colors.

By the induction hypothesis, Lemma~\ref{lemma:235-2}, and
Lemma~\ref{lemma:mod5}, we have $c(-3x)=c(2x)=c(7x)=c(12x)$. Since
$0<5a+9<p$ for $a>1$ and $\frac{5a+9}{2}=7<11$ for $a=1$, then by the
induction hypothesis, we have $c((\frac{5a+9}{2})x)=c(2x)$. Since
$(5x,12x,px,(\frac{5a+9}{2})x)$, $(10x,7x,px, (\frac{5a+9}{2})x)$,
$(15x,2x,px,(\frac{5a+9}{2})x)$, $(20x,-3x,px,(\frac{5a+9}{2})x)$ are
solutions to $x_1+x_2+x_3=4x_4$ and $c(2x) \in
\{c(5x),c(10x),c(15x),c(20x)\}$, then $px$ and $2x$ are different
colors. Hence, $c(px)=c(x)$.

\textbf{Case 2:} $p=10a+3$ with $a \geq 1$.

Since $0<5a+2<p$, then by the induction hypothesis, we have $(5a+2)x$
and $2x$ are the same color. Since $(px,px,2x,(5a+2)x)$ is a solution to
$x_1+x_2+x_3=4x_4$, then $px$ and $2x$ are different colors.

Since $p=10a+3 \geq 13$, then by the induction hypothesis, we have
$c(4x)=c(9x)=c(14x)$. For $a=1$, we have $p=13$, and $(13x,14x,9x,9x)$
is a solution to $x_1+x_2+x_3=4x_4$, so $13x$ and $4x$ are different
colors. For $a>1$, we have $0<5a+8 < p$ , so by the induction
hypothesis, we have $(\frac{5a+8}{2})x$ and $4x$ are the same
color. Since $(px,9x,4x,(\frac{5a+8}{2})x)$ is a solution to
$x_1+x_2+x_3=4x_4$, then $px$ and $4x$ are different colors.

Since $p=10a+3 \geq 13$ and $0<5a+7<p$, then by the induction
hypothesis, we have
$c(x)=c(6x)=c(\frac{7}{2}x)=c(\frac{72}{7}x)=c((\frac{5a+7}{2})x)$,
$c(\frac{5}{7}x)=c(15x)$, and $c(\frac{15}{2}x)=c(20x)$. Since the
tuples $(5x,px,6x,(\frac{5a+7}{2})x)$, $(10x,px,x,(\frac{5a+7}{2})x)$,
$(\frac{5}{7}x,px,\frac{72}{7}x,(\frac{5a+7}{2})x)$, and
$(\frac{15}{2}x,px,\frac{7}{2}x,(\frac{5a+7}{2})x)$ are solutions to
$x_1+x_2+x_3=4x_4$, and $c(x) \in \{c(5x),\penalty0 c(10x),\penalty0
c(15x),\penalty0 c(20x)\}$, then $px$ and $x$ are different
colors. Hence, $c(px)=c(3x)$.

\textbf{Case 3:} $p=10a+7$ with $a \geq 0$.

We have $c(-4x)=c(x)$. Since $0<5a+2<10a+7$, then by the induction
hypothesis, we have $c((\frac{5a+2}{2})x)=c(x)$. Since
$(px,-4x,x,(\frac{5a+2}{2})x)$ is a solution to $x_1+x_2+x_3=4x_4$, then
$px$ and $x$ are different colors.

We have $c(-2x)=c(3x)$. Since $0<5a+3<10a+7$, then by the induction
hypothesis, we have $c(5a+3)x)=c(3x)$. Since $(px,px,-2x,(5a+3)x)$ is a
solution to $x_1+x_2+x_3=4x_4$, then $px$ and $3x$ are different colors.

Note that $c(-16x)=c(-6x)=c(-\frac{9}{4}x)=c(-\frac{8}{3}x)=c(4x)$,
$c(\frac{5}{3}x)=c(10x)$, and $c(\frac{5}{4}x)=c(20x)$. Since
$0<5a+3<10a+7$, then by the induction hypothesis, we have
$(\frac{5a+3}{2})x$ and $4x$ are the same color. Since
$(5x,px,-6x,(\frac{5a+3}{2})x)$,
$(\frac{5}{3}x,px,-\frac{8}{3}x,(\frac{5a+3}{2})x)$,
$(15x,px,-16x,(\frac{5a+3}{2})x)$, and
$(\frac{5}{4}x,px,-\frac{9}{4}x,(\frac{5a+3}{2})x)$ are solutions to
$x_1+x_2+x_3=4x_4$ and $c(4x) \in \{c(5x),\penalty0 c(10x),\penalty0
c(15x),\penalty0 c(20x)\}$, then $px$ and $4x$ are different colors.
Hence, $c(px)=c(2x)$.

\textbf{Case 4:} $p=10a+9$ with $a \geq 0$.

We have $c(x)=c(6x)$. Since $0<5a+6<p$, then by the induction
hypothesis, we have $(5a+6)x$ and $x$ are the same color. Since
$(px,px,6x,(5a+6)x)$ is a solution to $x_1+x_2+x_3=4x_4$, then $px$ and
$x$ are different colors.

We have $c(-3x)=c(2x)$. Since $0<5a+4<p$, then by the induction
hypothesis, we have $(\frac{5a+4}{2})x$ and $2x$ are the same
color. Since $(px,2x,-3x,(\frac{5a+4}{2})x)$ is a solution to
$x_1+x_2+x_3=4x_4$, then $px$ and $2x$ are different colors.

Since $p=10a+9 \geq 19$, then by the induction hypothesis, we have
$c(3)=c(-2)=c(-7)=c(-12)=c(-17)$. Since $0<5a+6<p$, then
$(\frac{5a+6}{2})x$ and $3x$ are the same color. Since
$(5x,px,-2x,(\frac{5a+6}{2})x)$, $(10x,px,-7x,(\frac{5a+6}{2})x)$,
$(15x,px,-12x,(\frac{5a+6}{2})x)$, and $(20x,px,-17x,(\frac{5a+6}{2})x)$
are solutions to $x_1+x_2+x_3=4x_4$, then $px$ and $3x$ are different
colors.  Hence, $px$ and $4x$ are the same color.
\end{proof}

\section{Minimal colorings of the nonzero reals}
\label{section:real} Let $A=\{a_1,\dotsc,a_n\}$ be a nonempty finite
multiset of nonzero real numbers. For example, $\{2,3\}$ and $\{2,2,3\}$
are considered to be different multisets. We define a coloring $c$ to be
\emph{strongly free} of monochromatic solutions to $a_1x_1+\dotsb+a_nx_n
= 0$ if for every nonempty subset $A' \subseteq A$, the coloring $c$ has
no monochromatic solutions to the equation $E(A')$ defined by
\[ E(A')\colon \sum_{a_i \in A'} a_ix_i=0. \]

\begin{proposition}
Suppose $c\colon \mathbb{Q}\setminus \{0\}\rightarrow \{1,\dotsc,r\}$ is
an $r$-coloring of the nonzero rational numbers with $r \not =1$ a
positive integer, and $c$ is \emph{strongly free} of monochromatic
solutions to $a_1x_1+\dotsb+a_nx_n=0$. Then in ZFC, there exists
$2^{2^{\aleph_0}}$ $r$-colorings of the nonzero real numbers that are
strongly free of monochromatic solution to $a_1x_1+\dotsb+a_nx_n=0$.
\end{proposition}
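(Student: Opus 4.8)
The plan is to fix a basis of $\mathbb{R}$ viewed as a vector space over $\mathbb{Q}$ and to manufacture the required colorings by a \emph{leading coordinate} construction driven by $c$, twisted independently on each coordinate. In ZFC, let $\kappa = 2^{\aleph_0} = \dim_{\mathbb{Q}}\mathbb{R}$ and choose a Hamel basis $\{e_\beta : \beta < \kappa\}$ of $\mathbb{R}$ over $\mathbb{Q}$, indexed by an ordinal, with $e_0 = 1$. Every nonzero real $x$ then has a unique finite expansion $x = \sum_\beta \phi_\beta(x)\,e_\beta$ with $\phi_\beta(x) \in \mathbb{Q}$; let $\lambda(x)$ be the largest index in its (finite, nonempty) support and $\ell(x) = \phi_{\lambda(x)}(x) \in \mathbb{Q}\setminus\{0\}$ the corresponding leading coordinate. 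For each family $\Pi = (\pi_\beta)_{\beta<\kappa}$ of permutations of the color set $\{1,\dots,r\}$, with $\pi_0$ the identity, define
\[ c_\Pi(x) = \pi_{\lambda(x)}\bigl(c(\ell(x))\bigr). \]
Because a rational $x$ has $\lambda(x)=0$ and $\ell(x)=x$, each $c_\Pi$ restricts to $c$ on $\mathbb{Q}\setminus\{0\}$, so these are genuine extensions of $c$.

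First I would verify that every $c_\Pi$ is strongly free. Suppose instead that for some nonempty sub-multiset $A'\subseteq A$, indexed by $I\subseteq\{1,\dots,n\}$, there is a $c_\Pi$-monochromatic solution $(x_i)_{i\in I}$ to $E(A')$ in the nonzero reals. Set $\mu = \max_{i\in I}\lambda(x_i)$ and $I_\mu = \{i\in I : \lambda(x_i)=\mu\}$, which is nonempty. Applying the $\mathbb{Q}$-linear coordinate functional $\phi_\mu$ to the relation $\sum_{i\in I} a_i x_i = 0$, and using that the coefficients $a_i$ are rational (as they are in every equation to which the proposition is applied), gives $\sum_{i\in I} a_i\,\phi_\mu(x_i)=0$. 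Since $\phi_\mu(x_i)=0$ for $i\notin I_\mu$ and $\phi_\mu(x_i)=\ell(x_i)$ for $i\in I_\mu$, this reduces to $\sum_{i\in I_\mu} a_i\,\ell(x_i)=0$, a solution to $E(A'')$ for the nonempty sub-multiset $A'' = \{a_i : i\in I_\mu\}\subseteq A$, with every $\ell(x_i)\in\mathbb{Q}\setminus\{0\}$. It is $c$-monochromatic: all $i\in I_\mu$ share $\lambda(x_i)=\mu$, so $c_\Pi$-monochromaticity forces $\pi_\mu(c(\ell(x_i)))$ to be constant, whence $c(\ell(x_i))$ is constant by injectivity of $\pi_\mu$. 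This contradicts the strong freeness of $c$, so no such solution exists.

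Then I would count the colorings. For a basis element $e_\beta$ with $\beta\ne 0$ we have $\lambda(e_\beta)=\beta$ and $\ell(e_\beta)=1$, so $c_\Pi(e_\beta)=\pi_\beta(c(1))$. As $\pi_\beta$ ranges over all permutations, $\pi_\beta(c(1))$ attains every color, so the map $\beta\mapsto c_\Pi(e_\beta)$ realizes every function $\kappa\setminus\{0\}\to\{1,\dots,r\}$, and two families realizing different such functions give colorings that already differ on some $e_\beta$. There are $r^{\kappa}=2^{\kappa}=2^{2^{\aleph_0}}$ of these functions, so $c$ extends to at least $2^{2^{\aleph_0}}$ distinct strongly free colorings of the nonzero reals. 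Since any coloring is a function on a set of size $2^{\aleph_0}$, there are at most $r^{2^{\aleph_0}}=2^{2^{\aleph_0}}$ colorings in all, and the count is therefore exactly $2^{2^{\aleph_0}}$.

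The one delicate point — and the reason the hypothesis reads ``strongly free'' rather than merely ``free'' — is the passage to the leading coordinate: the surviving index set $I_\mu$ is in general a \emph{proper} subset of $I$, so the rational solution extracted by $\phi_\mu$ satisfies a sub-equation $E(A'')$ and not the full equation. Freeness for the original equation alone would not preclude this; it is precisely freeness over all nonempty sub-multisets that yields the contradiction. Everything else is routine: the existence of the Hamel basis and the cardinal arithmetic are standard in ZFC, and the fact that $\phi_\mu$ annihilates every non-leading term is immediate from the definition of $\lambda$.
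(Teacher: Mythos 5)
Your proposal is correct and takes essentially the same route as the paper: a Hamel basis of $\mathbb{R}$ over $\mathbb{Q}$, colorings built by applying a permutation indexed by an extremal coordinate of the support to the color of that coordinate's rational coefficient, with strong freeness precisely absorbing the fact that only a sub-multiset of the equation survives the coordinate projection. The differences are cosmetic: you take the largest support index where the paper takes the least, and you spell out both the counting argument (which the paper merely asserts) and the rationality-of-coefficients caveat (which the paper leaves implicit).
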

\begin{proof}
In ZFC, every vector space has a basis. Viewing $\mathbb{R}$ as a
$\mathbb{Q}$-vector space, there is a well-ordered basis
$B=\{b_j\}_{j<2^{\omega}}$ for $\mathbb{R}$ as a $\mathbb{Q}$-vector
space. So every real number $x$ has a unique representation as
$x=\sum_{j<2^{\omega}} q_jb_j$, where each $q_j$ is rational and $q_j
\ne 0$ for only finitely many $j$. For a nonzero real number $x$, let
$j(x)$ be the least ordinal $j$ such that $q_j$ is nonzero. For a finite
set $S$ of nonzero real numbers, let $j(S)=\min_{x \in S}j(x)$.

Let $\Pi=(\pi_j)_{j<2^{\omega}}$ be any sequence of permutations $\pi_j$
of the set $\{1,\dotsc,r\}$ with $\pi_0$ being the identity
permutation. Define the coloring $C_{\Pi}$ by
$C_{\Pi}(x)=\pi_{j(x)}(c(q_{j(x)}))$.

Suppose $x_{1},\dotsc,x_{n}$ are nonzero real numbers and $A' \subseteq
A$ is a nonempty subset such that $\sum_{a_i \in A'} a_ix_i=0$. Let
$q_{i,j}$ be the coefficient of $b_j$ in the representation of
$x_i$. Since $\sum_{a_i \in A'} a_ix_i=0$, then $\sum_{a_i \in A'}
a_iq_{i,j(A')}=0$. Letting $A'' \subset A'$ be those $a_i \in A'$ such
that $q_{i.j(A')} \ne 0$, we see that $A''$ is a nonempty subset of $A$
and $\sum_{a_i \in A''} a_iq_{i,j(A')}=0$. Since
$C_{\Pi}(x_i)=\pi_{j(A')}(c(q_{i,j(A')}))$ for $a_i \in A''$ and $c$ is
strongly free of monochromatic solutions to $a_1x_1+\dotsb+a_nx_n=0$,
then the set $\{x_i\}_{a_i \in A''}$ is not monochromatic and $C_{\Pi}$
is strongly free of monochromatic solutions to
$a_1x_1+\dotsb+a_nx_n=0$. Since there are $2^{2^{\aleph_0}}$
nonisomorphic $r$-colorings of the form $C_{\Pi}$ and there are a total
of $2^{2^{\aleph_0}}$ nonisomorphic $r$-colorings of the real numbers,
then there are $2^{2^{\aleph_0}}$ colorings of the nonzero real numbers
that are strongly free of monochromatic solutions to
$a_1x_1+\dotsb+a_nx_n=0$.
\end{proof}

For $p$ a prime number and $n \geq 3$, each of the colorings $c_{p,n}$
is strongly free of monochromatic solutions to $E(p,n)$, so there are
$2^{2^{\aleph_0}}$ different $n$-colorings of the nonzero real numbers
without a monochromatic solution to $E(p,n)$.  In general, define the
$n$-coloring $c_{p,v,n}\colon \mathbb{Q}\setminus\{0\} \rightarrow
\{0,\dotsc,n-1\}$ by $c_{p,v,n}(x)\equiv \bigl \lfloor \frac{v_p(x)}{v}
\bigr \rfloor \pmod{n}$.  Notice that if $q$ is a nonzero rational
number and $v_p(q) \ne 0$, then $c_{p,v,n}$ with $v=v_p(q)$ is strongly
free of monochromatic solutions to $E(q,n)$.

We now turn our attention to the ZF+LM system of axioms. We prove
Lemma~\ref{lemma:lm} using the following multiplicative version of a
theorem of Steinhaus \cite{St}. For a set $A$ of real numbers, define
$A/A=\{a/a':a,a' \in A, a' \ne 0\}$.

\begin{theorem}[Steinhaus's Theorem]
If $A$ is a set of real numbers with positive Lebesgue measure, then
$A/A$ contains an entire interval $(1-\epsilon,1+\epsilon)$ for some
$\epsilon>0$.
\end{theorem}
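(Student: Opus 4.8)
The plan is to reduce the multiplicative statement to the classical additive Steinhaus theorem via the logarithm. First I would observe that since $A$ has positive Lebesgue measure, at least one of $A^{+}=A\cap(0,\infty)$ and $A^{-}=A\cap(-\infty,0)$ has positive measure; and because any quotient $a/a'$ of two elements lying in the same sign class is positive, it suffices to work inside whichever class has positive measure. After replacing $A$ by this subset, I may assume without loss of generality that $A\subseteq(0,\infty)$ with $m(A)>0$, writing $m$ for Lebesgue measure. I would also intersect with a bounded interval to arrange $m(A)<\infty$, which changes nothing in what follows.

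Next I would transport the problem to the additive line. The map $\log\colon(0,\infty)\to\mathbb{R}$ is a $C^{1}$ diffeomorphism with nonvanishing derivative, so it carries measurable sets to measurable sets and null sets to null sets in both directions; hence $B=\log(A)$ is measurable with $m(B)>0$. Applying the additive Steinhaus theorem to $B$, the difference set $B-B$ contains an interval $(-\delta,\delta)$ for some $\delta>0$. Translating back, note that $B-B=\{\log a-\log a':a,a'\in A\}=\log(A/A)$, since $A\subseteq(0,\infty)$ makes every ratio $a/a'$ positive. Exponentiating then gives $A/A\supseteq(e^{-\delta},e^{\delta})$, and choosing $\epsilon=\min(1-e^{-\delta},\,e^{\delta}-1)>0$ yields the desired $(1-\epsilon,1+\epsilon)\subseteq A/A$.

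The main obstacle, and the only genuinely nontrivial input, is the additive Steinhaus fact that $B-B$ contains a neighborhood of $0$, together with the claim that $\log$ does not destroy positive measure. For the latter I would use that a $C^{1}$ map with locally bounded derivative is locally Lipschitz and so cannot blow measure up from a positive-measure set, while its $C^{1}$ inverse prevents the measure from collapsing to zero; this is precisely what secures $m(B)>0$ rather than merely $m(B)\ge 0$.

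If a self-contained argument is preferred to citing the additive version, I would prove $B-B\supseteq(-\delta,\delta)$ directly from the Lebesgue density theorem. Almost every point of $B$ is a point of density $1$, so I may pick a density point $x_{0}$ and an interval $I$ centered at $x_{0}$ with $m(B\cap I)>\tfrac{3}{4}\lvert I\rvert$. For every $t$ with $\lvert t\rvert<\tfrac{1}{2}\lvert I\rvert$, the sets $B\cap I$ and $(B\cap I)+t$ both lie in an interval of length less than $\tfrac{3}{2}\lvert I\rvert$ yet each has measure exceeding $\tfrac{3}{4}\lvert I\rvert$, so their measures sum to more than the length of the containing interval; hence they intersect, producing $b_{1},b_{2}\in B$ with $b_{1}-b_{2}=t$, and therefore $t\in B-B$. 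Taking $\delta=\tfrac{1}{2}\lvert I\rvert$ completes this alternative and makes the whole proof rest only on the Lebesgue density theorem.
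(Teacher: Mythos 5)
Your proposal is correct, but there is nothing in the paper to compare it against: the paper does not prove this theorem at all. It states the result as a known ``multiplicative version'' of Steinhaus's theorem, citing Steinhaus's 1920 paper \cite{St}, and then uses it as a black box in the proof of Lemma~\ref{lemma:lm}. What you have written is therefore a complete, self-contained proof of a result the authors chose to quote. Your route is the standard one: restrict to whichever sign class of $A$ has positive measure, transport by $\log$ (correctly justified, since $\log$ and its inverse $\exp$ are locally Lipschitz and hence carry null sets to null sets in both directions, so $B=\log(A)$ is measurable with $m(B)>0$), invoke the additive Steinhaus theorem to get $B-B\supseteq(-\delta,\delta)$, and exponentiate to conclude $A/A\supseteq(e^{-\delta},e^{\delta})\supseteq(1-\epsilon,1+\epsilon)$. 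Your density-point proof of the additive statement is also sound: for $\lvert t\rvert<\tfrac12\lvert I\rvert$ the sets $B\cap I$ and $(B\cap I)+t$ lie in an interval of length $\lvert I\rvert+\lvert t\rvert<\tfrac32\lvert I\rvert$ while their measures sum to more than $\tfrac32\lvert I\rvert$, so they intersect, giving $t\in B-B$. Two cosmetic remarks: the minimum in your choice of $\epsilon$ is superfluous, since $1-e^{-\delta}<e^{\delta}-1$ for every $\delta>0$; and the preliminary truncation of $A$ to a set of finite measure is harmless but never actually used in the argument.
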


\begin{proof}[Proof of Lemma~\ref{lemma:lm}]
Suppose for contradiction that all color classes of $c$ are Lebesgue
measurable. Since $c$ partitions $\mathbb{R}$ into countably many color
classes, and Lebesgue measure is countably additive, then at least one
color class $C$ has positive Lebesgue measure. Since $x,da^mb^nx$ are
different colors for all integers $m$ and $n$, then $C/C$ and
$\{da^mb^n:m,n \in \mathbb{Z}\}$ are disjoint sets. Since $\log_a b$ is
irrational, then the set $\{da^mb^n:m,n \in \mathbb{Z}\}$ contains
numbers arbitrarily close to $1$. But by Steinhaus's theorem, $C/C$
contains an entire interval around $1$, so $C/C$ and $\{da^mb^n:m,b \in
\mathbb{Z}\}$ are not disjoint, a contradiction.
\end{proof}

\section{Acknowledgements}
The authors would like to thank Steven Butler, Daniel Kleitman, Rados
Radoi\v ci\'c, and Richard Stanley for interesting discussions
concerning the paper.

\bibliographystyle{amsalpha}

\begin{thebibliography}{BDHL94}

\bibitem[BDH92]{BeDeHi}
Vitaly Bergelson, Walter Deuber, and Neil Hindman, \emph{Rado's theorem for
  finite fields}, Sets, graphs and numbers (Budapest, 1991), Colloq. Math. Soc.
  J\'anos Bolyai, vol.~60, North-Holland, Amsterdam, 1992, pp.~77--88.

\bibitem[BDHL94]{BeDeHiLe}
Vitaly Bergelson, Walter Deuber, Neil Hindman, and Hanno Lefmann, \emph{Rado's
  theorem for commutative rings}, J. Combin. Theory Ser. A \textbf{66} (1994),
  no.~1, 68--92.

\bibitem[Deu75]{De1}
Walter Deuber, \emph{Partition theorems for abelian groups}, J. Combinatorial
  Theory Ser. A \textbf{19} (1975), 95--108.

\bibitem[FK05]{FoKl}
Jacob Fox and Daniel~J. Kleitman, \emph{On Rado's boundedness conjecture}, J.
  Combin. Theory Ser. A (2005), To appear.

\bibitem[FR05]{FoRa}
Jacob Fox and Rados Radoi\v{c}i\'{c}, \emph{The axiom of choice and the degree
  of regularity of equations over the reals}, Preprint, Dec 2005.

\bibitem[HLS03]{HiLeSt}
Neil Hindman, Imre Leader, and Dona Strauss, \emph{Open problems in partition
  regularity}, Combin. Probab. Comput. \textbf{12} (2003), no.~5-6, 571--583,
  Special issue on Ramsey theory.

\bibitem[Rad33]{Ra}
Richard Rado, \emph{Studien zur {K}ombinatorik}, Math. Z. \textbf{36} (1933),
  no.~1, 424--470.

\bibitem[Rad43]{Ra1}
\bysame, \emph{Note on combinatorial analysis}, Proc. London Math. Soc. (2)
  \textbf{48} (1943), 122--160.

\bibitem[Sch16]{Sc}
Issai Schur, \emph{Uber die kongruenze $x^m +y^m \equiv z^m$ (mod $p$)}, Jber.
  Deutsch. Math.-Verein. \textbf{25} (1916), 114--117.

\bibitem[Sol70]{So}
Robert~M. Solovay, \emph{A model of set-theory in which every set of reals is
  {L}ebesgue measurable}, Ann. of Math. (2) \textbf{92} (1970), 1--56.

\bibitem[Ste20]{St}
Hugo Steinhaus, \emph{Sur les distances des points des ensembles de mesure
  positive}, Fund. Math. \textbf{1} (1920), 93--104.

\bibitem[vdW27]{Wa}
Bartel~Leendert van~der Waerden, \emph{Beweis einer baudetschen vermutung},
  Nieuw Arch. Wisk. \textbf{15} (1927), 212--216.

\end{thebibliography}
\providecommand{\bysame}{\leavevmode\hbox to3em{\hrulefill}\thinspace}
\providecommand{\MR}{\relax\ifhmode\unskip\space\fi MR }
\providecommand{\MRhref}[2]{%
  \href{http://www.ams.org/mathscinet-getitem?mr=#1}{#2}
}
\providecommand{\href}[2]{#2}
\renewcommand{\refname}{{\normalsize References}}

\end{document}